\theoremstyle{plain}
\newtheorem{theorem}{Theorem}[section]
\newtheorem{lemma}[theorem]{Lemma}
\newtheorem{condition}[theorem]{Condition}
\newtheorem{remark}[theorem]{Remark}
\newcommand{\lb}{\left\{}
\newcommand{\rb}{\right\}}
\newcommand{\Def}{\overset{\text{def}}{=}}
\newcommand{\R}{\mathbb{R}}
\newcommand{\N}{\mathbb{N}}
\newcommand{\KK}{K}
\newcommand{\Borel}{\mathscr{B}}
\newcommand{\Pspace}{\mathscr{P}}
\newcommand{\BP}{\mathbb{P}}
\newcommand{\BE}{\mathbb{E}}
\newcommand{\filt}{\mathscr{F}}
\newcommand{\la}{\left \langle}
\newcommand{\ra}{\right\rangle}
\newcommand{\ee}{\mathfrak{e}}
\newcommand{\genA}{\mathcal{A}}
\newcommand{\genL}{\mathcal{L}}
\newcommand{\jump}{\mathcal{J}}
\newcommand{\pp}{\mathsf{p}}
\newcommand{\PP}{\mathcal{P}}
\newcommand{\NN}{{N,n}}
\newcommand{\mart}{\mathcal{M}}
\newcommand{\dfi}{\textsc{m}}
\newcommand{\QQ}{\mathcal{Q}}
\begin{document}

\title{Fluctuation Analysis for the Loss From Default}

\author{Konstantinos Spiliopoulos}
\address{Department of Mathematics \& Statistics\\
Boston University\\
Boston, MA 02215}
\email{kspiliop@math.bu.edu}

\author{Justin A. Sirignano}
\address{Department of Management Science and Engineering\\
Stanford University\\
Stanford, CA 94305}
\email{jasirign@stanford.edu}

\author{Kay Giesecke}
\address{Department of Management Science and Engineering\\
Stanford University\\
Stanford, CA 94305}
\email{giesecke@stanford.edu}

\date{\today. We are grateful to Andrew Abrahams, Jose Blanchet, Terry Lyons, and Marek Musiela for insightful comments. We also thank participants of seminars at Columbia University, University of Michigan, Rutgers University, the Oxford-Man Institute, the University of Southern California, the Office of Financial Research, the SIAM Conference on Financial Mathematics and Engineering, and the INFORMS Annual Meeting for comments.}

\begin{abstract}
We analyze the fluctuation of the loss from default around its large portfolio limit in a class of reduced-form models of correlated firm-by-firm default timing. We prove a weak convergence result for the fluctuation process and use it for developing a conditionally Gaussian approximation to the loss distribution. Numerical results illustrate the accuracy and computational efficiency of the approximation.
\end{abstract}

\maketitle

\section{Introduction}
Reduced-form point process models of correlated firm-by-firm default timing are widely used to measure the credit risk in portfolios of defaultable assets such as loans and corporate bonds. In these models, defaults arrive at intensities governed by a given system of stochastic differential equations.  Computing the distribution of the loss from default in these models is often challenging. Transform methods are tractable only in special cases, for example, if defaults are assumed to be conditionally independent. Monte Carlo simulation methods have a much wider scope but can be slow for the large portfolios and longer time horizons common in industry practice. A major US bank might easily have $20,000$ wholesale loans, $50,000$ to $100,000$ mid-market and commercial loans, and derivatives trades with $10,000$ to $20,000$ different legal entities. Simulation of such large pools is extremely burdensome.

This paper develops a conditionally Gaussian approximation to the distribution of the loss from default in large pools. 
The approximation applies to a broad class of empirically motivated reduced-form models in which a name defaults at an intensity that is influenced by an idiosyncratic risk factor process, a systematic risk factor process $X$ common to all names in the pool, and the portfolio loss. It is based on an analysis of the fluctuation of the loss around its large portfolio limit, i.e., a central limit theorem (CLT). More precisely, we show that the signed-measure-valued process describing the fluctuation of the loss around its law of large numbers (LLN) limit weakly converges to a unique distribution-valued process in a certain weighted Sobolev space. The limiting fluctuation process satisfies a stochastic evolution equation driven by the Brownian motion governing the systematic risk $X$ and a distribution-valued martingale that is centered Gaussian {\it given} $X$. The fluctuation limit, and thus the resulting approximation to the loss distribution, is Gaussian only in the special case that the names in the pool are not sensitive to $X$.
In the general case, the approximation is conditionally Gaussian. 

The weak convergence result proven in this paper extends the LLNs for the portfolio loss established in our earlier work \citeasnoun{GieseckeSpiliopoulosSowers2011} and \citeasnoun{GieseckeSpiliopoulosSowersSirigano2012}. The fluctuation analysis performed here involves challenging topological issues that do not arise in the analysis of the LLN. Firstly, the fluctuations process takes values in the space of signed measures. This space is not well suited for the study of convergence in distribution; the weak topology, which is the natural topology to consider here, is in general not metrizable (see, for example, \citeasnoun{BarrioDeheuvelsGeer2007}). We address this issue by analyzing the convergence of the fluctuations as a process taking values in the dual of a suitable Sobolev space of test functions. Weights are introduced in order to  control the norm of the fluctuations and establish tightness. Related ideas appear in \citeasnoun{Metivier1985}, \citeasnoun{FernandezMeleard}, \citeasnoun{KurtzXiong} and other articles, but for other systems and under different assumptions. Additional issues are the growth and degeneracies of the coefficients of our stochastic system, which make it difficult to identify the appropriate weights for the Sobolev space in which the convergence is established and uniqueness of the limiting fluctuation process is proved. \citeasnoun{Purtukhia1984}, \citeasnoun{GyongiKrylov1992}, \citeasnoun{KryLot1998}, \citeasnoun{Lot2001}, \citeasnoun{Kim2009}, and others face similar issues in settings that are different from ours. The approach we pursue is inspired by the class of weights that were introduced by \citeasnoun{Purtukhia1984} and further analyzed by \citeasnoun{GyongiKrylov1992}.

The fluctuation analysis leads to a second-order approximation to the loss distribution that can be significantly more accurate than an approximation obtained from just the LLN. Our numerical results, which are based on a method of moments for solving the stochastic evolution equation governing the fluctuation limit, confirm that the second-order approximation is much more accurate.  The second-order approximation is even accurate for relatively small portfolios in the order of hundreds of names or when the influence of the systematic risk process X is relatively low. The second-order approximation also improves the accuracy in the tails. The effort of computing the second-order approximation exceeds that of computing the first-order approximation, but is still much lower than directly simulating the pool.

Prior research has established various weak convergence results for interacting particle systems represented by reduced-form models of correlated default timing. \citeasnoun{daipra-etal} and \citeasnoun{daipra-tolotti} study mean-field models in which an intensity is a function of the portfolio loss. In a model with local interaction, \citeasnoun{giesecke-weber} take the intensity of a name as a function of the state of the names in a specified neighborhood of that name. In these formulations, the impact of a default on the dynamics of the surviving constituent names, a contagion effect, is permanent. In our mean-field model, an intensity depends on the path of the portfolio loss. Therefore, the impact of a default may be transient and fade away with time. There is a recovery effect. Moreover, the system which we analyze includes firm-specific sources of default risk and addresses an additional source of default clustering, namely the exposure of a name to a systematic risk factor process. This exposure generates a random limiting behavior for the LLN and leads to a fluctuation limit which is only conditionally Gaussian.

There are several other related articles. \citeasnoun{CMZ} prove a LLN for a mean-field system with permanent default impact, taking an intensity as a function of an idiosyncratic risk factor, a systematic risk factor, and the portfolio loss rate. The risk factors follow diffusion processes whose coefficients may depend on the portfolio loss. \citeasnoun{hambly} establish a LLN for a system in which defaults occur at first hitting times of correlated diffusion processes. Conditional on a correlating systematic risk factor governed by a Brownian motion, defaults occur independently of one another. 
\citeasnoun{PapanicolaouSystemicRisk} analyze large deviations in a system of diffusion processes that interact through their empirical mean and have a stabilizing force acting on each of them. \citeasnoun{fouque-ichiba} study defaults and stability in an interacting diffusion model of inter-bank lending.

The rest of this paper is organized as follows. Section \ref{S:Model} describes the class of reduced-form models of correlated default timing which we analyze.
Section \ref{S:LLN} reviews the law of large numbers proved by \citeasnoun{GieseckeSpiliopoulosSowersSirigano2012} for the portfolio loss in these models. Section \ref{S:Fluctuations} states our main weak convergence result for the fluctuation process, essentially a central limit theorem for the loss. Section \ref{S:NumericalMethod} provides a numerical method for solving the stochastic evolution equation governing the fluctuation limit and provides numerical results. Sections \ref{S:ProofMainCLT}-\ref{S:Uniqueness} are devoted to the proof of the main result. There is an appendix.

\section{Model and Assumptions}\label{S:Model}

We analyze a class of reduced-form point process models of correlated default timing in a pool of firms (``names''). Models of this type have been studied by \citeasnoun{CMZ}, \citeasnoun{daipra-etal}, \citeasnoun{daipra-tolotti}, \citeasnoun{GieseckeSpiliopoulosSowers2011}, \citeasnoun{GieseckeSpiliopoulosSowersSirigano2012}, and others. In these models, a default is governed by the first jump of a point process. The point process intensities follow a system of SDEs.

Fix a probability space  $(\Omega,\filt,\BP)$. Let $\{W^n\}_{n\in \N}$ be a countable collection of independent standard Brownian motions.  Let $\{\ee_n\}_{n\in \N}$ be an i.i.d. collection of standard exponential random variables which are independent of the $W^n$'s.  Finally, let $V$ be a standard Brownian motion which is independent of the $W^n$'s and $\ee_n$'s.  Each $W^n$ will represent a source of risk which is idiosyncratic to a specific name.  Each $\ee_n$ will represent a normalized default time for a specific name.  The process $V$ will drive a systematic risk factor process to which all names are exposed. Define $\mathcal{V}_{t}=\sigma\left(V_{s}, 0\leq s\leq t\right)\vee \mathcal{N}$ and $\filt^{n}_t=\sigma\left( (V_{s},W_{s}^{n}), 0\leq s\leq t\right) \vee \mathcal{N}$, where $\mathcal{N}$ contains the $\BP$-null sets. Lastly, we will denote by $\BP_{Y}$ the conditional law given $Y$, where $Y$ may represent $\mathcal{V}_{t}$, for example. 

Fix $N\in\N$, $n\in\{1,2,\ldots, N\}$ and consider the following system:
\begin{equation} \label{E:main}
\begin{aligned}
d\lambda^\NN_t &= -\alpha_\NN (\lambda^\NN_t-\bar \lambda_\NN)dt + \sigma_\NN \sqrt{\lambda^\NN_t}dW^n_t +  \beta^C_\NN dL^N_t+ \beta^S_\NN \lambda^\NN_t dX_t, \qquad \lambda^\NN_0 = \lambda_{\circ, N,n}\\
dX_t &= b_{0}( X_t) dt + \sigma_{0}(X_t)dV_t,\qquad X_0= x_\circ\\ 
\tau^\NN &= \inf\lb t\ge 0: \int_{0}^t \lambda^\NN_s ds\ge \ee_n\rb\\
L^N_t &= \frac{1}{N}\sum_{n=1}^N \chi_{\{\tau^\NN\le t\}}.
\end{aligned}
\end{equation}
Here, $\chi$ is the indicator function. The initial value $x_\circ$ of $X$ is fixed. The $\alpha_\NN,\bar \lambda_\NN,\sigma_\NN,\beta^C_\NN,\beta^S_\NN, \lambda_{\circ, N,n}$ are parameters.\footnote{\citeasnoun{giesecke-schwenkler} develop and analyze likelihood estimators of the parameters of point process models such as (\ref{E:main}), given a realization of $L^N$ over some sample period.}
The process $L^N$ is the fraction of names in default, which we loosely call the ``loss rate'' or simply the ``loss.''\footnote{The term ``loss rate'' can be taken literally if a name corresponds to a unit-notional position in the portfolio and the recovery at default is $0$.} The process $\lambda^\NN$ represents the intensity, or conditional default rate, of the $n$-th name in the pool.  More precisely, $\lambda^\NN$ is the density of the Doob-Meyer compensator to the default indicator $1-\dfi^\NN_t$, where $\dfi^\NN_t = \chi_{\{\tau^\NN>t\}}$. That is, a martingale is given by
$\dfi^\NN_t + \int_{0}^t \lambda^\NN_s \dfi^\NN_s ds.$
The results in Section 3 of \citeasnoun{GieseckeSpiliopoulosSowers2011} imply that the system (\ref{E:main}) has a unique  solution such that $\lambda^\NN_t\ge 0$ for every $N\in\N$, $n\in\{1,2,\ldots, N\}$ and $t\ge 0$. Thus, the model is well-posed.

The default timing model (\ref{E:main}) addresses several channels of default clustering. An intensity is influenced by an idiosyncratic source of risk represented by a Brownian motion $W^n$, and a source of systematic risk common to all firms--the diffusion process $X$.  Movements in $X$ cause correlated changes in firms' intensities and thus provide a channel for default clustering emphasized by \citeasnoun{ddk} for corporate defaults in the U.S.  The sensitivity of $\lambda^\NN$ to changes in $X$ is measured by the parameter $\beta^S_\NN\in\R$.  The second channel for default clustering is modeled through the feedback (``contagion'') term $\beta^C_\NN dL^N_t$.  A default causes a jump of size $\beta^C_\NN/N$ in the intensity $\lambda^\NN$, where $\beta^C_\NN\in \R_+= [0,\infty)$.  Due to the mean-reversion of $\lambda^\NN$, the impact of a default fades away with time, exponentially with rate $\alpha_\NN\in\R_+$. \citeasnoun{azizpour-giesecke-schwenkler} have found self-exciting effects of this type to be an important channel for the clustering of defaults in the U.S., over and above any clustering caused by the exposure of firms to systematic risk factors.

Figure \ref{fig:paths} illustrates the behavior of the system when the systematic risk $X$ follows an OU process. It shows sample paths of the processes $\lambda^\NN \dfi^\NN$ and $L^N$ for a pool with $N=4$ names. Between defaults, the intensities $\lambda^\NN$ of the surviving names evolve as correlated diffusion processes, where the co-movement is driven by the systematic risk $X$. At a default, the process $\lambda^\NN \dfi^\NN$ associated with the defaulting name drops to 0. At the same time, the processes $\lambda^\NN \dfi^\NN$ associated with the surviving names jump by $1/2$, and the loss $L^N$ increases by $1/4$.


We allow for a heterogeneous pool; the intensity dynamics of each name can be different.  We capture these different dynamics by defining the parameter ``types''
\begin{equation*}\label{E:typedef} \pp^\NN \Def (\alpha_\NN,\bar \lambda_\NN,\sigma_\NN,\beta^C_\NN,\beta^S_\NN). \end{equation*}
The $\pp^\NN$'s take values in parameter space $\PP\Def \R_+^4\times \R$. For each $N\in \N$, define
\begin{equation*}
\hat \pp^\NN_t \Def (\pp^\NN,\lambda^\NN_t)
\end{equation*}
for all $n\in \{1,2,\dots, N\}$ and $t\ge 0$. Define $\hat \PP\Def \PP\times \R_+$. The vector $\hat{\underline{\pp}}_{0}^{N}=(\hat \pp^{N,1}_0,\ldots,\hat \pp^{N,N}_0)$ represents a random environment that addresses the heterogeneity of the system.

\begin{condition}\label{A:regularity}
We assume that the $\hat\pp^{\NN}_{0}$ are i.i.d. random variables with
common law $\nu$ and that $\nu$ has compact support in $\hat{\PP}$. In particular, and to avoid inessential complications, we assume that all elements of the random vector $\pp^{\NN}_0$ are bounded in absolute values by some  $K$, for all $n\in \{1,2,\dots, N\}$. Moreover, we assume that $\{\hat\pp^{\NN}_{0}\}$ is independent of $\{W^{n}\}, \{\ee_n\}$ and $V$.
\end{condition}

This formulation of heterogeneity generalizes that of \citeasnoun{GieseckeSpiliopoulosSowersSirigano2012}. They assume that $\pi=\lim_{N\to \infty} \frac{1}{N}\sum_{n=1}^N \delta_{\pp^\NN}$ and $\Lambda_\circ=\lim_{N\to \infty} \frac{1}{N}\sum_{n=1}^N \delta_{\lambda_{\circ,N,n}}$ exist \textup{(}in $\Pspace(\PP)$ and $\Pspace(\R_+)$, respectively\textup{)}. Their formulation is a special case of the one proposed here, where the law $\nu=\pi\times\Lambda_\circ$.

\begin{condition}\label{A:integrability}
We assume that $\BE\int_{0}^{t}\left[|b_{0}(X_{s})|^{2}+|\sigma_{0}(X_{s})|^{4}\right]ds<\infty$ for all $t\ge 0$.
\end{condition}

\section{Law of large numbers}\label{S:LLN}
Except for special cases of little practical interest, the distribution of the portfolio loss $L^{N}$ in the system (\ref{E:main}) is difficult to compute. We are interested in an approximation to this distribution for the large portfolios common in practice, i.e., for the case that $N$ is large. \citeasnoun{GieseckeSpiliopoulosSowersSirigano2012} prove a law of large numbers (LLN) for the loss in the system (\ref{E:main}) and use it for developing a first-order approximation. We first review this result and then extend it in Section \ref{S:Fluctuations} by analyzing the fluctuations of the loss around its LLN limit. The fluctuation analysis will allow us to construct a more accurate second-order approximation.

To outline the LLN, define
\begin{equation*}
\mu^N_t \Def \frac{1}{N}\sum_{n=1}^N\delta_{\hat \pp^\NN_t}\dfi^\NN_t;
\end{equation*}
this is the empirical distribution of the type and intensity for those names which are still ``alive.''
We note that $\mu^N_t$ is a sub-probability measure. Since
\begin{align}\label{ln}
L^N_t =1-\mu^N_t(\hat \PP),
\end{align}
it suffices to study the limiting behavior of the measure-valued process $\{\mu^{N}_{t},t\in[0,T]\}_{N\in\N}$ for some fixed horizon $T>0$.
Let $E$ be the collection of sub-probability measures (i.e., defective probability measures) on $\hat \PP$; i.e., $E$ consists
of those Borel measures $\nu$ on $\hat \PP$ such that $\nu(\hat \PP)\le 1$.
Topologizing $E$ in the usual way (by projecting onto the one-point compactification of $\hat \PP$; see Chapter 9.5 of \citeasnoun{MR90g:00004}) we obtain that $E$ is a Polish space.
Thus, $\mu^N$ is an element of $D_E[0,\infty)$ where $D$ is the Skorokhod space (i.e., $D_E[0, \infty)$ is the set of RCLL processes on $[0, \infty)$ taking values in $E$).

Further, for $\hat \pp=(\pp,\lambda)$ where
$\pp=(\alpha,\bar \lambda,\sigma,\beta^C,\beta^S)\in \PP$ and $f\in
C^{2}_{b}(\hat \PP)$ (the space of twice continuously differentiable, bounded functions), define, similarly to \cite{GieseckeSpiliopoulosSowers2011}, the operators
\footnote{At this point we would like to remark that there is a typo in the
formulation of the corresponding operators in \cite{GieseckeSpiliopoulosSowers2011}.
In particular, it is mentioned there that $(\genL_2 f)( \pp) =  \frac{\partial f}{\partial \lambda}( \pp)$ and $\QQ( \pp) =  \beta^C\lambda$,
where it should have been $(\genL_2 f)( \pp) = \beta^C \frac{\partial f}{\partial \lambda}( \pp)$ and $\QQ( \pp) = \lambda$.}
\begin{align*}\label{E:Operators1}
(\genL_1 f)(\hat \pp) &= \frac12 \sigma^{2}\lambda\frac{\partial^2 f}{\partial \lambda^2}(\hat \pp) - \alpha(\lambda-\bar \lambda)\frac{\partial f}{\partial \lambda}(\hat \pp)-\lambda f(\hat \pp)\\
(\genL_2 f)(\hat \pp) &= \beta^C\frac{\partial f}{\partial \lambda}(\hat \pp)\\
(\genL_3^{x} f)(\hat \pp) &= \beta^{S}\lambda b_{0}(x)\frac{\partial f}{\partial \lambda}(\hat \pp)+\frac{1}{2}(\beta^{S})^{2}\lambda^{2}\sigma_{0}^{2}(x)\frac{\partial^{2}f}{\partial \lambda^{2}}(\hat \pp)\\
(\genL_4^{x} f)(\hat \pp) &=
\beta^{S}\lambda\sigma_{0}(x)\frac{\partial f}{\partial
\lambda}(\hat \pp).
\end{align*}
Also define
\begin{equation*} \QQ(\hat \pp) \Def \lambda.\end{equation*}
The generator $\genL_1$ corresponds to the diffusive part of the
intensity with killing rate $\lambda$, and $\genL_2$ is the
macroscopic effect of contagion on the surviving intensities at any
given time.  The operators $\genL_3^{x}$ and $\genL_4^{x}$ are related
to the exogenous systematic risk $X$.
For a measure $\nu\in D_E[0,\infty)$, we also specify the inner product
\begin{equation*}  \big{<} f, \nu \big{>}_{E} = \int_{\hat \PP} f(\hat \pp)  d\nu(\hat \pp). \end{equation*}

The law of large numbers of \citeasnoun{GieseckeSpiliopoulosSowersSirigano2012} states that $\mu^{N}_{t}$ weakly converges to $\bar{\mu}_{t}$ in $D_E[0,T]$. To rigorously formulate the result we need to use the weak form (see also Lemma \ref{L:Qchar}). In particular, for all $f\in C^{2}_{b}(\hat \PP)$, the evolution of
$\bar{\mu}_{\cdot}$ is governed by the measure evolution equation
\begin{equation}
d\la f,\bar \mu_t\ra_E =  \left\{\la \genL_1f,\bar \mu_t\ra_E+ \la \QQ,\bar \mu_t\ra_E
\la \genL_2f,\bar \mu_t\ra_E+\la \genL^{X_{t}}_3 f,\bar
\mu_t\ra_E\right\}dt+\la \genL^{X_{t}}_4 f,\bar \mu_t\ra_E dV_{t},\quad
 \text{ a.s.}\label{Eq:LimitMu}
\end{equation}
The LLN suggests an approximation to the distribution of the loss $L^N$ in large pools by the large pool limit:
\begin{align}\label{firstorder}
L_t^N \overset{d} \approx L_t &= 1- \bar{\mu}_t(\hat \PP).
\end{align}

\section{Main Result: Fluctuations Theorem}\label{S:Fluctuations}
In order to improve the first-order approximation (\ref{firstorder}), we analyze the fluctuations of $\mu^N$ around its large pool limit $\bar{\mu}$.
As is indicated by the proof of Lemmas \ref{L:KeyEstimate} and \ref{L:KeyEstimate2} (see also \citeasnoun{GieseckeSpiliopoulosSowersSirigano2012}), for an appropriate metric, the sequence $\{\sqrt{N}(\mu^{N}_{t}-\bar{\mu}_{t}): N<\infty\}$ is stochastically bounded.
Hence, it is reasonable to define the scaled fluctuation process $\Xi^{N}$ by
\begin{equation}\label{fluct-process}
\Xi^{N}_{t}= \sqrt{N}(\mu^{N}_{t}-\bar{\mu}_{t}).
\end{equation}
In Theorem \ref{T:MainCLT} below we will show that the signed-measure-valued process $\Xi^{N}$ weakly converges to a fluctuation limit process $\bar{\Xi}$ in an appropriate space.

The analysis of the limiting behavior of the fluctuation process (\ref{fluct-process}) involves issues that do not occur in the treatment of the LLN.
In particular, even though the fluctuation process is a signed-measure-valued process, its limit process $\{\bar{\Xi}_{\cdot}\}$ is distribution-valued in an appropriate space.
The space of signed measures endowed with the weak topology is not metrizable.\footnote{In general, a topological space is Polish if and only if its topology can be defined by
 a metric for which it is complete and separable. A locally compact space is metrizable if and only if it has a countable base in which it is Polish. The weak convergence of
 probability measures (i.e., finite non-negative measures) on a separable metric space can be defined by the Prokhorov metric. Thus, the weak topology on the space of probability
measures on a locally compact space $E$ is metrizable if and only if $E$ is Polish. However, issues arise if one replaces the  space of finite non-negative measures with the space
 of finite signed measures. The space of bounded and continuous functions on $E$, endowed with the sup-norm is a Banach space. Its topological dual endowed with the weak$^{*}$-topology
 coincides with the set of finite signed measures on $E$ endowed with the weak topology. However, the topological dual of an infinite dimensional Banach space is not metrizable,
 even though, by the Banach-Alaoglu theorem, any weak$^{*}$-compact subset of such a topological dual will be metrizable. For a more thorough discussion of these issues, see
\citeasnoun{BarrioDeheuvelsGeer2007}, Remark 1.2.}   The difficulty is then to identify a rich enough space, where tightness and uniqueness can be proven. It turns out that
we have to consider the convergence in weighted Sobolev spaces. Here, several technical challenges arise. These are mainly due to
 the growth and degeneracies of the coefficients of the system (\ref{E:main}), which make it difficult to identify the correct weights. The spaces that we consider are Hilbert spaces. For
the sake of clarity of presentation the appropriate Hilbert spaces will be defined in detail in Section \ref{S:SobolevSpace}. For the moment, we mention that the space in question is denoted by
 $W^{J}_{0}(w,\rho)$, with $w$ and $\rho$ the appropriate weight functions, $J\in\N$ and  $W^{-J}_{0}(w,\rho)$ will be its dual. A precise definition is given in Section \ref{S:SobolevSpace}.

We need to introduce several additional operators to state our weak convergence result. For $\hat \pp=(\pp,\lambda)\in \hat \PP\subset \mathbb{R}^{6}$ and $f\in C^{2}_{b}(\hat \PP)$, we define
\begin{align*}\label{E:Operators2}
(\mathcal{G}_{x,\mu}f)(\hat \pp)&=(\genL_1f)(\hat \pp)+ (\genL^{x}_3 f)(\hat \pp)+ \la \QQ, \mu\ra_{E} (\genL_2f)(\hat \pp)+\la \genL_2f, \mu\ra_{E}\QQ(\hat \pp)\\
(\genL_5 (f,g))(\hat \pp) &=\sigma^{2} \frac{\partial f}{\partial \lambda}(\hat \pp)\frac{\partial g}{\partial \lambda}(\hat \pp)\lambda\\
(\genL_6 (f,g))(\hat \pp) &=f(\hat \pp) g(\hat \pp)\lambda\\
(\genL_7 f)(\hat \pp) &=f(\hat \pp)\lambda\\
\end{align*}

The main result of this paper is the following theorem.
\begin{theorem}\label{T:MainCLT}
Let $D=\frac{\text{dim}(\hat{\PP})}{2}=3$. For $J>0$ large enough (in particular for $J>3D+1$) and for weight functions $(w,\rho)$ such that Condition \ref{A:AssumptionsOnWeights} holds, the sequence $\{\Xi^{N}_{t}, t\in[0,T]\}_{N\in \N}$ is relatively compact in $D_{W_{0}^{-J}(w,\rho)}[0,T]$. For any subsequence of this sequence,
 there exists a subsubsequence that converges in distribution with limit $\{\bar{\Xi}_{t},t\in[0,T]\}$. Any accumulation point $\bar{\Xi}$ satisfies the  stochastic evolution equation
\begin{equation}\label{Eq:CLT}
\la f,\bar \Xi_t\ra = \la f,\bar \Xi_0\ra+ \int_{0}^{t}\la \mathcal{G}_{X_{s},\bar{\mu}_{s}}f,\bar
\Xi_s\ra ds+\int_{0}^{t}\la \genL^{X_{s}}_4 f,\bar \Xi_s\ra dV_{s}+\la f,\bar{\mart}_t\ra, \text{ a.s.}
\end{equation}
for any $f\in W_{0}^{J}(w,\rho)$, where $\bar{\mart}$ is a distribution-valued martingale with predictable variation process
\begin{equation*}
[\la f,\bar{\mart}\ra]_t=\int_{0}^{t}\left[\la \genL_5 (f,f),\bar \mu_s\ra+ \la \genL_6 (f,f),\bar \mu_s\ra+\la \genL_2 f,\bar \mu_s\ra^{2}\la \QQ,\bar \mu_s\ra-2\la \genL_7 f,\bar \mu_s\ra\la \genL_2 f,\bar \mu_s\ra\right]ds.
\end{equation*}
Moreover, conditional on the $\sigma$-algebra  $\mathcal{V}_{t}$, $\bar{\mart}_t$ is centered Gaussian with covariance function, for $f,g\in W_{0}^{J}(w,\rho)$,
  given by
\begin{align}
\mathrm{Cov}\left[\la f, \bar{\mart}_{t_1}\ra, \la g, \bar{\mart}_{t_2}\ra\,\Big|\, \mathcal{V}_{t_1 \vee t_2}\right]&=\BE\bigg[\int_{0}^{t_1 \wedge t_2}\left[\la \genL_5 (f,g),\bar \mu_s\ra+ \la \genL_6 (f,g),\bar \mu_s\ra+\la \genL_2 f,\bar \mu_s\ra\la \genL_2 g,\bar \mu_s\ra\la \QQ,\bar \mu_s\ra\right.\nonumber\\
& \hspace{1cm}\left.-\la \genL_7 g,\bar \mu_s\ra\la \genL_2 f,\bar \mu_s\ra-\la \genL_7 f,\bar \mu_s\ra\la \genL_2 g,\bar \mu_s\ra\right]ds\, \Big|\,\mathcal{V}_{t_1 \vee t_2}\bigg].\label{Eq:ConditionalCovariation}
\end{align}
Finally, the limiting stochastic evolution equation (\ref{Eq:CLT}) has a unique solution in $W_{0}^{-J}(w,\rho)$ and thus the limit accumulation point $\bar{\Xi}_{\cdot}$ is unique.
\end{theorem}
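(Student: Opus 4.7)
The plan is to execute the standard four-step programme for distribution-valued fluctuation theorems, adapted to the weighted Sobolev framework $W^{-J}_0(w,\rho)$: (i) derive a semimartingale decomposition of $\Xi^N$, (ii) prove relative compactness via moment estimates in the dual Sobolev norm, (iii) identify limit points through the decomposition and a conditional martingale CLT, and (iv) show pathwise uniqueness of the limiting SPDE.

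First, for $f\in C^2_b(\hat\PP)$ I would apply It\^o's formula to $f(\hat\pp^\NN_t)\dfi^\NN_t$, sum in $n$, and use the jump structure of $\dfi^\NN$ to obtain
\begin{equation*}
\la f,\mu^N_t\ra = \la f,\mu^N_0\ra + \int_0^t \la \mathcal{G}_{X_s,\mu^N_s}f,\mu^N_s\ra\,ds + \int_0^t \la \genL_4^{X_s}f,\mu^N_s\ra\,dV_s + \mart^N_t(f),
\end{equation*}
where $\mart^N_t(f)$ combines the $W^n$-driven martingales and the compensated default-jump martingales. Subtracting the LLN equation (\ref{Eq:LimitMu}) for $\bar\mu$, multiplying by $\sqrt N$, and expanding $\mathcal{G}_{X_s,\mu^N_s} - \mathcal{G}_{X_s,\bar\mu_s}$ (the only nonlinearity, arising from the bilinear $\la\QQ,\mu\ra\la\genL_2 f,\mu\ra$ term) as $\la\QQ,\bar\mu_s\ra\la\genL_2 f,\Xi^N_s\ra + \la\QQ,\Xi^N_s\ra\la\genL_2 f,\bar\mu_s\ra + N^{-1/2}\la\QQ,\Xi^N_s\ra\la\genL_2 f,\Xi^N_s\ra$, I obtain a semimartingale equation for $\Xi^N$ whose drift involves precisely $\mathcal{G}_{X_s,\bar\mu_s}$, whose Brownian part involves $\genL_4^{X_s}$, plus a scaled martingale $\sqrt{N}\mart^N_t(f)$ and a remainder of order $N^{-1/2}$.

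Next, for relative compactness I would establish uniform moment bounds of the form $\BE\sup_{t\le T}\|\Xi^N_t\|_{W^{-J}_0(w,\rho)}^2 \le C$ together with a modulus-of-continuity estimate. The key input is that the embedding constants from $W^J_0(w,\rho)$ into spaces of $C^2$-functions with polynomial growth (provided $J>3D+1$ with $D=3$) allow each term in the semimartingale decomposition to be bounded in dual norm using the assumed weight conditions; these bounds appear already as Lemmas \ref{L:KeyEstimate}--\ref{L:KeyEstimate2}. Combined with Aldous--Kurtz (or Mitoma/Jakubowski-type) criteria, this yields tightness of $\{\Xi^N\}$ in $D_{W^{-J}_0(w,\rho)}[0,T]$. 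Passing to a subsequence, I identify the limit by taking $N\to\infty$ in the semimartingale equation: the drift and $V$-driven terms converge by continuity of $\mathcal{G}_{X_s,\bar\mu_s}$ and $\genL_4^{X_s}$ on $W^J_0(w,\rho)$; the remainder vanishes; and the predictable quadratic variation of $\sqrt{N}\mart^N_t(f)$ is computed by separating the $W^n$-contribution (giving $\la\genL_5(f,f),\bar\mu_s\ra$) from the compensated default-jump contribution (giving $\la\genL_6(f,f),\bar\mu_s\ra$ from the individual jump variance and the cross terms with $\la\genL_2 f,\mu^N\ra\QQ$ after centering), matching (\ref{Eq:ConditionalCovariation}) on the diagonal. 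Conditional Gaussianity given $\mathcal{V}_t$ follows from the fact that, conditional on $V$, the sources $\{W^n,\ee_n\}$ remain i.i.d.\ across $n$, so $\sqrt{N}\mart^N_t(f)$ is a conditionally centered sum to which a conditional martingale CLT applies, yielding a limit that is Gaussian in the $\mathcal{V}_t$-conditional law with the stated covariance.

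Finally, uniqueness of (\ref{Eq:CLT}) in $W^{-J}_0(w,\rho)$ follows by fixing a version of $(V,\bar\mu,\bar\mart)$, subtracting two candidate solutions $\bar\Xi^{(1)}-\bar\Xi^{(2)}$, and deriving an energy inequality: testing against suitable approximations, one obtains $d\|\bar\Xi^{(1)}_t-\bar\Xi^{(2)}_t\|_{-J,w,\rho}^2 \le C\|\bar\Xi^{(1)}_t-\bar\Xi^{(2)}_t\|_{-J,w,\rho}^2\,dt + (\cdot)\,dV_t$, so that Gr\"onwall's lemma (after taking expectations to kill the $V$-martingale) gives uniqueness. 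The hardest step, in my view, is calibrating the weights $(w,\rho)$ so simultaneously (a) the operators $\mathcal{G}_{x,\bar\mu}$ and $\genL_4^x$ are bounded from $W^J_0(w,\rho)$ to itself despite the $\sqrt{\lambda}$-degeneracy and the linear growth $\lambda X$ from the systematic factor, and (b) the uniform $N$-independent moment bounds needed for tightness can be closed; this is precisely where the Purtukhia--Gy\"ongy--Krylov class of weights encoded in Condition \ref{A:AssumptionsOnWeights} is essential.
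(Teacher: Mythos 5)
Your four-step roadmap matches the structure of the paper's proof: semimartingale decomposition of $\Xi^N$, tightness in $D_{W^{-J}_0(w,\rho)}[0,T]$, identification via conditional martingale limit, and pathwise uniqueness via an energy estimate. The remainder expansion you describe (splitting $\mathcal{G}_{X_s,\mu^N_s}-\mathcal{G}_{X_s,\bar\mu_s}$ into the bilinear terms plus an $O(N^{-1/2})$ remainder) is correct and corresponds to the paper's passage from (\ref{Eq:PrelimitMu}) to (\ref{Eq:PrelimitEquation}) together with Lemma \ref{L:AuxilliaryBounds}.

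However, you do not explain how to obtain the uniform bound $\sup_N\sup_{t\leq T}\BE\|\Xi^N_{t\wedge\theta_{N,\kappa}}\|^2_{-J}\leq C$, and a direct Gronwall argument on this quantity does not close: the semimartingale equation for $\Xi^N$ contains a forcing term whose dual norm is $O(1)$, not $o(1)$, and you need an a priori bound on $\|\Xi^N_s\|_{-J}$ to even start. The missing idea is the coupling argument of Lemmas \ref{L:KeyEstimate}--\ref{L:KeyEstimate2}. One introduces the auxiliary intensities $\lambda^{\NN,G}$ obtained from (\ref{E:main}) by replacing the loss feedback $\beta^C_\NN L^N_t$ with the deterministic function $G(t)$ (the LLN loss), so that, conditional on $\mathcal{V}$, the pairs $(\lambda^{\NN,G},\dfi^{\NN,G})$ are i.i.d. One then splits $\Xi^N_t=\sqrt{N}(\mu^N_t-\mu^{N,G}_t)+\sqrt{N}(\mu^{N,G}_t-\bar\mu_t)$. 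The second piece is a centered sum of conditionally i.i.d. terms; it is bounded by applying Burkholder's inequality to the discrete-time $\BP(\cdot\,|\,\mathcal{V})$-martingale $\sum_{n}[f(\lambda^{\NN,G}_t)\dfi^{\NN,G}_t-\la f,\bar\mu_t\ra]$ and then summing over an orthonormal basis of $W^J_0$, using the Hilbert--Schmidt embedding $W^J_0\hookrightarrow W^{D+1}_0$ (this is where $J>2D+1$ enters). The first piece solves a closed inhomogeneous equation whose forcing involves $\la\QQ,\mu^{N,G}_s-\bar\mu_s\ra$, now of order $N^{-1/2}$ by the Burkholder bound, so Gronwall closes. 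Without this decomposition, your step (ii) is asserted but not proved. The Hilbert--Schmidt embedding itself, which converts scalar estimates on test functions $f_a$ into dual-norm estimates via Parseval, should also be made explicit; it is what justifies passing from "each term bounded on test functions" to "bounded in $W^{-J}_0$."

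A second, smaller gap is in the uniqueness step. Your proposed energy inequality $d\|\Phi_t\|^2_{-J}\leq C\|\Phi_t\|^2_{-J}\,dt+(\cdot)\,dV_t$ is the right shape, but obtaining it is not just a matter of operator boundedness: $\mathcal{G}_{x,\bar\mu}$ maps $W^{J+2}_0$ to $W^J_0$, not $W^J_0$ to itself, so $\la\phi,\mathcal{G}^*\phi\ra_{-J}$ is not directly controlled by $\|\phi\|^2_{-J}$. The argument requires an explicit cancellation: after integration by parts (Lemma \ref{L:IntegrationByParts}), the term $2\la\phi,\mathcal{G}^{1,*}\phi\ra_{-J}$ coming from $\genL_3^x$ contributes $-|\beta^S\sigma_0(x)|^2\sum_k\int w^2\rho^{2k}|\lambda\psi^{(k+1)}|^2\,d\lambda$, which exactly cancels the leading contribution $+|\beta^S\sigma_0(x)|^2\sum_k\int w^2\rho^{2k}|\lambda\psi^{(k+1)}|^2\,d\lambda$ from $\|\genL_4^{*,x}\phi\|^2_{-J}$ (Lemmas \ref{L:Bounds2a} and \ref{L:Bounds1}). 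This cancellation between the It\^o correction from the systematic risk diffusion and the stochastic integral's quadratic variation is the reason Gronwall applies despite the quadratic growth of the $\genL_3^x$ coefficients; the analogous sign on the $\sigma^2\lambda\psi''$ term from $\genL_1$ similarly absorbs the $\sqrt\lambda$-degeneracy. Your phrase "testing against suitable approximations" conceals what is actually the crux of the uniqueness proof.
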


\begin{remark}
Clearly, when $\beta^{S}_{\NN}=0$ for all $\NN$, then the limiting distribution-valued martingale $\bar{\mart}$ is centered Gaussian with covariance operator given by the (now deterministic) term within the expectation in (\ref{Eq:ConditionalCovariation}). Also, we remark that the operators $\mathcal{G}_{x,\mu}$ and $\genL_4^{x}$ in the stochastic evolution equation (\ref{Eq:CLT}) are linear. Conditionally on the systematic risk $X$, equation (\ref{Eq:CLT}) is linear.
\end{remark}

The proof of Theorem \ref{T:MainCLT} is developed in Sections \ref{S:ProofMainCLT} through \ref{S:Uniqueness}. In Section \ref{S:ProofMainCLT}, we identify the limiting equation and prove the convergence theorem based on the tightness and uniqueness results of Sections \ref{S:Tightness} and Section \ref{S:Uniqueness}. In Section
 \ref{S:SobolevSpace}, we discuss the Sobolev spaces we are using.  In Section \ref{S:Tightness},
we prove that the family $\{\Xi^{\NN}_{t}, t\in[0,T]\}_{N\in \N}$ is relatively compact in  $D_{W^{-J}_{0}(w,\rho)}[0,T]$, see Lemma \ref{L:XiPrelimitSpace}. In Section \ref{S:Uniqueness}, we prove uniqueness of (\ref{Eq:CLT}) in $W_{0}^{-J}(w,\rho)$, see Theorem \ref{T:SEE_Uniqueness}.

The fluctuation analysis leads to a second-order approximation to the distribution of the portfolio loss $L^{N}$ in large pools. The weak convergence established in Theorem \ref{T:MainCLT}  implies that
\begin{equation*}\label{ApproxMain00}
\mathbb{P}(\sqrt{N}(L^{N}_{t}-L_{t})\geq \ell)\approx \mathbb{P}(\bar{\Xi}_{t}(\hat \PP)\leq-\ell)
\end{equation*}
for large $N$. Theorem \ref{T:MainCLT} and its proof in Section \ref{S:ProofMainCLT} imply that $(\Xi^N,V,\bar{\mu})$ weakly converges to $(\bar{\Xi},V,\bar{\mu})$. This motivates the approximation
\begin{align*}
\mu^N_t = \frac{1}{\sqrt{N}}\Xi^N_{t} + \bar{\mu}_t \overset{d} \approx \frac{1}{\sqrt{N}} \bar{\Xi}_{t} + \bar{\mu}_t,
\end{align*}
which implies a second-order approximation for the portfolio loss:
\begin{align}\label{ApproxMain}
L_t^N \overset{d} \approx L_t - \frac{1}{\sqrt{N}} \bar{\Xi}_{t}(\hat \PP).
\end{align}
The next section develops, implements and tests a numerical method for computing the distribution of $L_t - \bar{\Xi}_{t}(\hat \PP)/\sqrt{N}$, and numerically demonstrates the accuracy of the approximation (\ref{ApproxMain}).

\section{Numerical Solution}\label{S:NumericalMethod}
The numerical solution of the stochastic evolution equation for the fluctuation limit is not standard.  There surprisingly exists little literature on solving this class of problems.  Solutions do not exist in $L^2$, so standard Galerkin methods are not applicable.\footnote{An example of the unique challenges posed by these equations is the simple case of i.i.d. Brownian motions starting from zero on the real line.  The fluctuation limit $\zeta_t(x)$ for this system satisfies the stochastic evolution equation $d  \big{<} f, \zeta_t \big{>} = \frac{1}{2} \big{<} f'', \zeta_t \big{>} dt + d \big{<} f, \mathcal{W}_t \big{>}$, where $\mathcal{W}_t$ is a Gaussian process with covariance $\textrm{Cov} [ \big{<} f, \mathcal{W}_t \big{>}, \big{<} g, \mathcal{W}_s \big{>} ] = \int_0^{s \wedge t} \int_{\mathbb{R}} f'(x) g'(x) (2 \pi u)^{-1/2} e^{- x^2/(2 u)}   dx du $.  Suppose there exists a solution $\zeta(t,x)$ which satisfies the SPDE $d  \zeta= \frac{1}{2} \frac{\partial^2 \zeta}{\partial x^2} dt + d \mathcal{W}_t$. Then, $\zeta_t(x) = \zeta(t,x) dx$ and one could solve the stochastic evolution equation by solving this SPDE.  However, challenges are immediately evident.  The Green's function solution $\zeta(t,x) = \int_0^t \int_{\mathbb{R}} dt' d x' G(t',x'; t,x) d \mathcal{W}_{t'}$ to the SPDE has infinite variance, indicating that the Green's solution is not in fact a solution to this SPDE.  It is interesting to note that the lack of a Green's function solution stems directly from the covariance structure; if one had $f(x) g(x)$ instead of $f'(x) g'(x)$, there would be a finite variance Green's function solution.}

\subsection{Method of Moments}
We provide a method of moments for solving for the fluctuation limit $\bar \Xi(\hat \PP)$. Along with the LLN limit $\bar\mu(\hat \PP)=1-L$, the fluctuation limit yields the approximation (\ref{ApproxMain}). The method extends the numerical approach for computing $\bar\mu(\hat \PP)$ developed by \citeasnoun{GieseckeSpiliopoulosSowersSirigano2012}. We first present it for a homogeneous portfolio; i.e., set $\nu=\delta_{\hat \pp_{0}}$ where $\hat \pp_0 \in \hat \PP$. Write $\bar\Xi_t(d \lambda)$ for the solution of the stochastic evolution equation (\ref{Eq:CLT}); for notational convenience, we do not explicitly show the dependence on the fixed parameters. For $k \in \mathbb{N}$, we define the ``fluctuation moments''
\begin{equation*}
v_{k}(t)=\int_{0}^{\infty} \lambda^{k} \bar \Xi_t(d \lambda).
\end{equation*}
We are interested in $v_{0}(t)$, which is equal to the fluctuation limit $\bar \Xi_t(\R_+)$. We also define the moments $u_{k}(t)$ of the solution $\bar\mu_t( d\lambda)$ of the stochastic evolution equation (\ref{Eq:LimitMu}), again not explicitly showing the dependence on the fixed parameters:
\begin{equation*}
u_{k}(t)=\int_{0}^{\infty}\lambda^{k} \bar \mu_t( d\lambda).
\end{equation*}
The zero-th moment gives the limiting loss $L$ since $L_t = 1 - u_0(t)$. Both the zero-th LLN moment $u_0(t)$ and the zero-th fluctuation moment $v_0(t)$ are required to compute the approximation (\ref{ApproxMain}).  The moments $\{u_{k}(t) \}_{k=0}^{\infty}$ satisfy a system of SDEs,
\begin{equation}
\begin{aligned}
d u_k(t)  &= \big\{ u_k(t) \big{(} - \alpha k + \beta^S b_0(X_t) k + 0.5 (\beta^S)^2 \sigma_0^2 (X_t) k (k-1)  \big{)}   \\
&+ u_{k-1}(t) \big{(} 0.5 \sigma^2 k(k-1) + \alpha \bar{\lambda} k + \beta^C k u_1(t) \big{)} - u_{k+1}(t) \big\} dt  + \beta^S \sigma_0(X_t) k u_k(t) d V_t, \\
u_k(0) &= \int_0^{\infty} \lambda^k \bar\mu_0( d \lambda),
\end{aligned}
\label{MomentSystemLLN}
\end{equation}
see \citeasnoun{GieseckeSpiliopoulosSowersSirigano2012}. The fluctuation moments  $\{ v_k(t) \}_{k=1}^{\infty}$ can also be shown to satisfy a system of SDEs using the stochastic evolution equation  (\ref{Eq:CLT}). Taking a test function $f=\lambda^k$ in (\ref{Eq:CLT}), we have
\begin{align}\label{MomentSystem}
\begin{aligned}
d v_{k}(t)&=\beta^{C} k u_{k-1}(t)v_{1}(t)dt+\left[0.5\sigma^{2} k(k-1)+\alpha\bar{\lambda} k+k \beta^{C}u_{1}(t)\right]v_{k-1}(t)dt-v_{k+1}(t)dt\\
&+\left[k \beta^{S} b_{0} (X_{t})-k \alpha+0.5 k (k-1)\left(\beta^{S}\sigma_{0}(X_{t})\right)^{2}\right]v_{k}(t)dt+k \beta^{S} \sigma_{0} (X_{t})v_{k}(t)dV_{t}+d \bar{\mart}_{k}(t), \\
v_k(0) &= \int_0^{\infty} \lambda^k \bar \Xi_{0}( d \lambda ).
\end{aligned}
\end{align}
where $\bar{\mart}_{k}(t)  = \la  \lambda^k, \bar{\mart}_t  \ra$ and its covariation is
\begin{align*}
\big{[} d \bar{\mart}_{k}(t), d \bar{\mart}_{j}(t) \big{]}
&= \big{(} \sigma^2 k j u_{k+j-1}(t) + u_{k+j+1} - \beta^C k u_{k-1} u_{j+1} (t) \notag \\
&- \beta^C j u_{j-1}(t) u_{k+1}(t) + (\beta^C)^2 k j u_{k-1}(t) u_{j-1}(t) u_1(t) \big{)} dt.
\end{align*}

From these covariations we form the covariation matrix $\Sigma_{\mart}(t)$.  Note that this matrix depends upon the path of $X$; the solution to the SDE system (\ref{MomentSystem})  is conditionally Gaussian given a path of $X$.

For the derivation of (\ref{MomentSystem}) to be rigorous, we must show that $\lambda^k$ belongs to the weighted Sobolev space $W_0^J(w, \rho)$ for every $k \in \mathbb{N}$. The choice of the weight functions $w$ and $\rho$ is not unique; see Section \ref{S:SobolevSpace}. We will make a choice that is convenient for our purposes but may not be minimal. Let $\rho = 1$, and $w = \exp( - c \lambda)$ for $c > 0$.  Then, $\rho^{\ell -1} D^{\ell} \rho$ and $w^{-1} \rho^{\ell} D^{\ell} w$ are bounded for every $\ell \leq J$ and Condition \ref{A:AssumptionsOnWeights} in Section \ref{S:SobolevSpace} is satisfied.  There exists a unique solution to the stochastic evolution equation for $\bar \Xi$ in this chosen space.  $W_0^J(w,p)$ is the closure of $\mathcal{C}_0^{\infty}$ in the norm $|| \cdot ||_{W_0^J(w, \rho) }$.  Therefore, to prove $\lambda^k$ belongs to $W_0^J(w, \rho)$, we show that a sequence $f_m^k \in \mathcal{C}_0^{\infty}$ approaches $\lambda^k$ as $m \to\infty$ under the norm $|| \cdot ||_{W_0^J(w, \rho) }$.

More precisely, let $f_m^k = \lambda^k B_m(\lambda)$ where $B_m(\lambda) =1 $ on $[0, K_m]$, $0 \leq B_m \leq 1$ on $(K_m, K_m + \epsilon)$, and $B_m(\lambda) = 0$ on $[K_m + \epsilon, \infty)$. Here, $(K_m)$ is a sequence of numbers tending to $\infty$ and $\epsilon>0$. Furthermore, $B_m( \lambda)$ is smooth.  Such a function is
\begin{align*}
 B_m( \lambda) &= 1 - \frac{\int_{-\infty}^{\lambda} g_m(x) dx }{ \int_{-\infty}^{\infty} g_m(x) dx }, \notag \\
 g_m( x) &= h(x-K_m) h(K_m+ \epsilon-x), \notag
\end{align*}
where $h(x) = e^{- \frac{1}{x}}$ when $x > 0$ and $0$ for $x \leq 0$.  Then, $f_m^k$ is a smooth function with compact support.  We also note that the integral $\int_{-\infty}^{\infty} g_m(x) dx$ only depends upon $\epsilon$; it is not affected by $K_m$.  To prove that $|| \lambda^k - f_m^k(\lambda) ||_{W_0^J(w, \rho) } \to 0$ as $K_m \to\infty$, one has to show that the following integral tends to zero as $K_m \to \infty$ for each $\ell \leq J$.:
\begin{align*}
\int_0^{\infty} e^{-c \lambda} |D^{\ell} [\lambda^k - f_m^k(\lambda) ] |^2 d \lambda &= \int_0^{K_m} e^{-c  \lambda} |D^{\ell} [\lambda^k -\lambda^k B_m(\lambda) ] |^2 d \lambda + \int_{K_m}^{K_m+\epsilon} e^{-c \lambda} |D^{\ell} [\lambda^k -\lambda^k B_m(\lambda) ] |^2 d \lambda \notag \\
&+ \int_{K_m+\epsilon}^{\infty} e^{-c \lambda} |D^{\ell} [\lambda^k -\lambda^k B_m(\lambda) ] |^2 d \lambda.
\end{align*}
The first term trivially is zero due to the definition of $B_m$ while the third term can be shown to tend to zero using integration by parts.  Turning to the second integral, we have that
\begin{align*}
\int_{K_m}^{K_m+\epsilon} e^{-c \lambda} |D^{\ell} [\lambda^k -\lambda^k B_m(\lambda) ] |^2 d \lambda \leq
 \int_{K_m}^{K_m+\epsilon} e^{-c \lambda} |D^{\ell} \lambda^k |^2 +  \int_{K_m}^{K_m+\epsilon} e^{-c \lambda} |D^{\ell} [\lambda^k B_m(\lambda) ] |^2  d \lambda.
\end{align*}
The first integral again obviously tends to zero by integration by parts.  In order to show that the second term tends to zero, one has to demonstrate that the $ |D^{\ell} B_m(\lambda)  |^2 $ is uniformly bounded in $m$ on $[K_m, K_m+ \epsilon]$.  For $\ell = 0$, the result is trivial.  For $\ell = 1$, we have that
\begin{align*}
D^{1}  B_m(\lambda)  =  C \exp \left(- \frac{\epsilon}{y (\epsilon - y) }  \right) \leq C \exp \left( - \frac{4}{\epsilon} \right),
\end{align*}
where $y = \lambda - K_m$ and $y \in [0, \epsilon ]$.  For $\ell =2$, we have that
\begin{align*}
D^{2}  B_m(\lambda)  =  C \exp \left(- \frac{\epsilon}{y (\epsilon - y) }  \right) \left( \frac{1}{(\epsilon - y)^2} - \frac{1}{y^2} \right).
\end{align*}
Exponential decay dominates the polynomial growth, so  $D^{2}  B_m(\lambda)$ and its derivative approach zero as $y$ approaches $0$ or $\epsilon$ .  Since $[0, \epsilon]$ is a bounded domain, $D^{2}  B_m(\lambda)$ is therefore also bounded on $[0, \epsilon]$.  Higher derivatives can be treated similarly. Therefore, since $ |D^{\ell} B_m(\lambda)  |^2 $ is uniformly bounded on $[K_m, K_m + \epsilon]$, the second integral also tends to zero.

Unless the volatility parameter $\sigma=0$, the SDE system (\ref{MomentSystem}) is not closed.\footnote{If $\sigma=0$, the LLN reduces to two SDEs and the fluctuation limit also reduces to two SDEs.}  For computational purposes, it must be truncated at some level $K$.   Note that this means one also needs the LLN moments $\{ u_k \}_{k=0}^{2 K+1}$.  
The system (\ref{MomentSystemLLN}) must also be truncated and can be solved using an Euler scheme.  Important details for the numerical solution of (\ref{MomentSystemLLN}) as well as an alternative formulation as a random ODE system are described in \citeasnoun{GieseckeSpiliopoulosSowersSirigano2012}.

\subsection{Case with no Systematic Risk}
When $\beta^S = 0$, there exists a semi-analytic solution. Observe that the moment system (\ref{MomentSystemLLN}) is now a system of ODEs and that (\ref{MomentSystem}) is a linear system with a Gaussian forcing term. Therefore, the system of moments $\mathbf{v}(t)$ is itself Gaussian. One can directly compute its distribution. We rewrite (\ref{MomentSystem}) concisely as
\begin{eqnarray}
d \mathbf{v}(t) = A(t) \mathbf{v}(t) dt + d \bar{\mathbf{\mart}}(t), \quad \mathbf{v}(0) = \mathbf{v}_0,
\label{GaussODEOne}
\end{eqnarray}
where $A: [0,T] \mapsto \mathbb{R}^{K+1, K+1}$ and $\mathbf{v}, \bar{\mathbf{\mart}}: [0,T] \times \Omega \mapsto \mathbb{R}^{K+1}$.  Now, let $\Psi: [0,T] \mapsto \mathbb{R}^{K+1, K+1}$ be the fundamental solution matrix satisfying
\begin{eqnarray}
d \Psi(t) = A(t) \Psi(t) dt, \quad \Psi(0) = I,
\label{FundamentalSolution}
\end{eqnarray}
where $I$ is the identity matrix.  If $\beta^C = 0$, $A$ is a constant matrix and there is an exponential matrix solution.  For $\beta^C >0$, one can either solve (\ref{FundamentalSolution}) numerically using a method such as Runge Kutta or the Magnus series approximation.  We assume that $\mathbf{v}_0$ is deterministic (if it is a Gaussian random variable, the result is very similar).  Then,
\begin{eqnarray*}
\mathbf{v}(t) = \Psi(t) \mathbf{v}_0 + \Psi(t) \int_0^t \Psi^{-1}(s) d \bar{\mathbf{\mart}}(s).
\end{eqnarray*}
It easily follows that $\mathbf{v}(t) \sim \mathcal{N}( \Psi(t) \mathbf{v}_0, \Sigma(t))$ where
\begin{eqnarray}
\Sigma(t) = \Psi(t) \big{[} \int_0^t  \Psi^{-1}(s) \Sigma_{\mart}(s) \big{(} \Psi^{-1}(s) \big{)}^{\top} ds \big{]} \Psi(t)^{\top}.
\label{SemiAnalyticVariance}
\end{eqnarray}
Therefore, we have avoided simulation of (\ref{GaussODEOne}) by instead developing a semi-analytic approach for the computation of the distribution.  Also, note that in the case $\beta^C = \beta^S = 0$, we have a closed-form formula for the approximating loss since $\Psi(t) = e^{A t}$.

Figure \ref{fig1} shows a comparison between the approximate loss distribution according to (\ref{ApproxMain}) and the actual loss distribution for a pool of $N=1,000$ names, for each of several values of the contagion sensitivity $\beta^C$. Here and in the numerical experiments described below, the actual distribution is estimated by simulating the default times in (\ref{E:main}) using the discretization method detailed in \citeasnoun{GieseckeSpiliopoulosSowersSirigano2012}.  The second-order Gaussian approximation (\ref{ApproxMain}) is a significant improvement over the first-order approximation (\ref{firstorder}) implied by the LLN.   The latter produces a delta function for this case while the Gaussian approximation is able to accurately capture a large portion of the remaining noise in the finite system.
\begin{figure}[t]
\begin{center}
\includegraphics[scale=0.8]{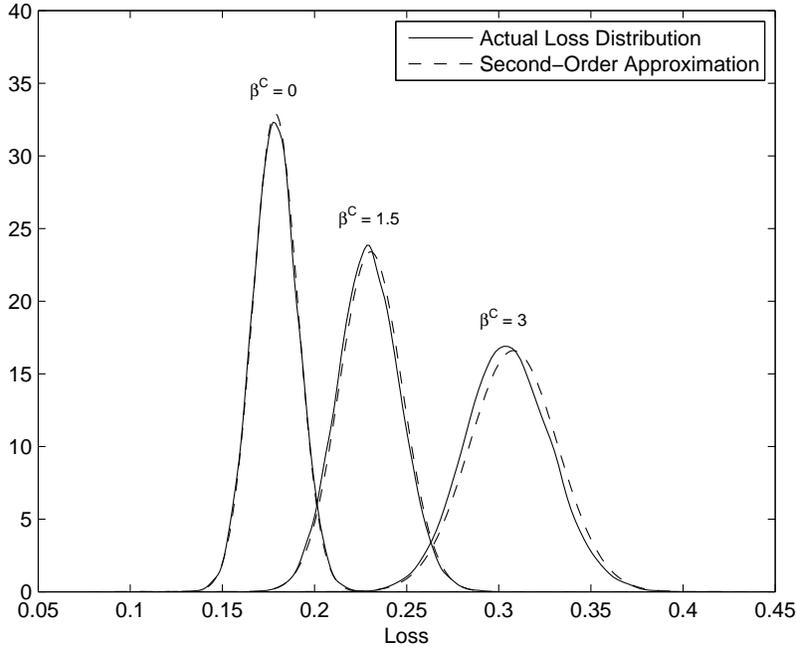}
\caption{Comparison of approximate loss distribution and actual loss distribution in the finite system at $T = 1$ for $N = 1,000$.  The parameter case is $\sigma = .9, \alpha = 4, \lambda_0 = .2, \bar{\lambda} = .2$, and $ \beta^S = 0$.  }
\label{fig1}
\end{center}
\end{figure}

The linear stochastic evolution equation (\ref{Eq:CLT}) involves a linearization of the jump term $\beta^C d L_t^N$ in (\ref{E:main}).  It is reasonable to expect that the accuracy of the approximation will decrease as $\beta^C$ increases.  This is confirmed by the numerical results.  For larger pools, the linearization will become more accurate.  In the case of $\beta^C = 3$, we are taking too small of a pool.

Another common concern with linearizations is a loss of accuracy over long time horizons.  However, we do not observe any significant loss of accuracy for the fluctuation limit in our model.

\subsection{Case with Systematic Risk}
In the case with systematic risk (i.e., $\beta^S > 0$), the solution to  (\ref{MomentSystem}) is conditionally Gaussian given a path of $X$.  There are two approaches to treat the general case.  We present the most obvious approach first and then a second scheme which achieves lower variance by replacing a portion of the simulation with a semi-analytic computation.

\subsubsection{Scheme 1: Direct Simulation} \label{DirectSim}
Here we directly discretize and solve the systems  (\ref{MomentSystemLLN})  and (\ref{MomentSystem}) (for example, using an Euler method).  Alternatively, one could instead numerically solve the random ODE formulation of (\ref{MomentSystemLLN}) given in \citeasnoun{GieseckeSpiliopoulosSowersSirigano2012} and then solve (\ref{MomentSystem}).
\begin{itemize}
\item Simulate paths $X^1, \ldots, X^M$ of the systematic risk process on $[0,T]$.
\item Conditional upon a path $X^m$, the martingale term $\mathbf{\bar{\mart}}$ is Gaussian.
\begin{itemize}
\item  First, solve for the LLN moments $\mathbf{u}^m(t)$ in (\ref{MomentSystemLLN}) for a path $X^m$ and then calculate the conditional covariation matrix $\Sigma_{\mart}^m(t)$.
\item Perform a spectral decomposition of the covariation matrix $\Sigma_{\mart}^m(t)$ at a discrete set of times on a grid $\mathcal{T}$.
\item For $j = 1, \ldots, J$, discretize system (\ref{MomentSystem}) on the grid $\mathcal{T}$.  At each time $t_i \in \mathcal{T}$, draw a sample $d \mathbf{\bar{\mart}}^{m,j}(t_i)$ using the spectral decomposition of $\Sigma_{\mart}^m(t)$ and then solve (\ref{MomentSystem}) using the Euler method.  This produces $J$ samples from the conditionally Gaussian solution given the path $X^m$.
\item  Finally, for each $j$, compute a sample of the ``approximate conditional loss'' $L_t^{m,j,N}$ for a pool of size $N$ as the difference between the conditional LLN loss $L^m_t=1-u^m_0(t)$ and a sample of the conditionally Gaussian solution scaled by $1/\sqrt{N}$. 
\end{itemize}
\item Approximate the unconditional distribution of $L^N_t$ by $\frac{1}{J M} \sum_{m=1}^M \sum_{j=1}^J \delta_{L_t^{m,j,N}}$.
\end{itemize}
Let $\sigma_1^2 = \textrm{Var} [\int_0^{\infty} f(y) p_t(y|X) dy]$ and $\sigma_2^2 = \mathbb{E}[ \textrm{Var}[ f(L_t^N)| X] ] $ where $p_t(y|X)$ is the conditional density of the approximate loss given $X$ at time $t$.  The total simulation time is $M \tau_1 + M J \tau_2$, where  $\tau_1$ is the time needed to simulate the systematic risk process and solve the LLN moment system while $\tau_2$ is the time required to solve the fluctuation moment system.  If we want to estimate the expectation of a function $f$ of the approximate loss at some time $t$, it is straightforward to show using the first order condition for a minimum that the optimal allocation of simulation resources $M^{\ast}$ and $J^{\ast}$ to minimize the estimator's variance for a fixed computational time $\tau$ is
\begin{align}
M^{\ast} & \approx \frac{-2  \tau_1 \sigma_1^2 + \sqrt{ (2  \tau_1 \sigma_1^2)^2 + 4 \sigma_1^2  ( \sigma_2^2 \tau_1 \tau_2 - \sigma_1^2 \tau_1^2) } }{ 2 ( \sigma_2^2 \tau_1 \tau_2 - \sigma_1^2 \tau_1^2) } \tau = M_0^{\ast} \tau \notag \\
J^{\ast} & \approx \frac{1- M_0^{\ast} \tau_1}{ \tau_2 M_0^{\ast}}.
\label{OptimalChoice}
\end{align}
The optimal number for $J$ does not depend upon the total computational resources.

\subsubsection{Scheme 2: Conditionally Semi-Analytic Approach} \label{SemiScheme}
We propose an alternate scheme that replaces some of the Monte Carlo simulation of the previous scheme with a semi-analytic calculation.  First, we rewrite (\ref{MomentSystem}) concisely as
\begin{align}
d \mathbf{v}(t) = A(t) \mathbf{v}(t) dt + B \mathbf{v}(t) dX_t + d \mathbf{\bar{\mart}}(t), \quad \mathbf{v}(0) = \mathbf{v}_0,
\label{LinearGaussTwo}
\end{align}
where $A(t)$ is a random, time-dependent matrix and $B$ is a constant matrix.  In the calculations that follow, we assume that $\mathbf{v}_0$ is deterministic; the computations when $\mathbf{v}_0$ is a Gaussian random variable are similar.  The fundamental solution $\Psi: [0,T] \times \Omega \mapsto \mathbb{R}^{K+1, K+1}$ satisfies
\begin{align}
d \Psi(t) = A(t) \Psi(t) dt + B \Psi(t) d X_t, \quad \Psi(0) = I.
\label{fundamentalX}
\end{align}
Then, since $[dX_t, d \bar{\mart}_k (t)] =0$ for any $k \geq 0$,
\begin{align}
\mathbf{v}(t) = \Psi(t) \mathbf{v}_0 + \Psi(t) \int_0^t \Psi^{-1}(s) d \mathbf{\bar{\mart}}(s) .
\end{align}
(It is easy to show that this is a solution to (\ref{LinearGaussTwo}) by substituting it back into that SDE.)  Unfortunately, even when $\beta^C = 0$, $b_0(x) = b$,  and $\sigma_0(x) = \sigma_0$ so that $A$ is a constant matrix, there is no exponential solution to (\ref{fundamentalX}) since $A$ and $B$ do not commute.

To calculate the approximation (\ref{ApproxMain}), one must simulate from the joint law of the paths of $X$ and $\mathbf{\bar{\mart}}$ over $[0,T]$.  We can do this using the following scheme:
\begin{itemize}
\item Simulate paths $X^1, \ldots, X^M$ of the systematic risk process on $[0,T]$.
\item Conditional upon a path $X^m$, we have a Gaussian solution for the fluctuation moments $\mathbf{v}^m(t)$.  Furthermore, we can semi-analytically compute the conditional distribution of  $\mathbf{v}^m(t)$.
\begin{itemize}
\item Given a path $X^m$, solve for the LLN moments  $\mathbf{u}^m(t)$ (which also yield the conditional LLN loss $L^m$) and the fundamental solution $\Psi^m(t)$.
  \item Then, form the conditional covariation matrix $\Sigma_{\mart}^m(t)$.  Finally, compute the conditional covariance matrix using the closed-form formula (\ref{SemiAnalyticVariance}).  This yields $\textrm{Var}[ v_0^m(t)]$.
\end{itemize}
\item Approximate the unconditional distribution of $L^N_t$ by $\frac{1}{M} \sum_{m=1}^M \mathbb{P}^m_t$, where $\mathbb{P}^m_t$ is a Gaussian measure with mean $L^m$ and variance $\textrm{Var}[ v_0^m(t)]/N$.
\end{itemize}

A skeleton of the systematic risk process $X$ for both schemes discussed above can be generated exactly (without discretization bias) using the methods of \citeasnoun{Beskos}, \citeasnoun{chen}, or \citeasnoun{smelov}.  It is also worthwhile to highlight that either scheme yields an approximation to the distribution of the loss $L^N_t$ for all time horizons $t \leq T$ (i.e., the ``loss surface'')  and all portfolio sizes $N \in \mathbb{N}$ simultaneously.

Scheme 2 will have lower variance than Scheme 1.  Let $\hat{f}^1$ and $\hat{f}^2$ be estimators of $\mathbb{E} [ f(L_t^N)]$ using Schemes $1$ and $2$, respectively.  We have
\begin{align*}
 \textrm{Var} [ \hat{f}^1 ] = \textrm{Var} \bigg[ \frac{1}{J M} \sum_{m=1}^M \sum_{j=1}^J f(L^{m,j,N}_t) \bigg] =  \frac{1}{M} \big{(} \sigma_1^2  + \frac{1}{J} \sigma_2^2 \big{)} \notag > \textrm{Var} [ \hat{f}^2 ] = \frac{1}{M} \sigma_1^2.
\end{align*}
$\textrm{Var} [ \hat{f}^2 ] = \frac{1}{M} \sigma_1^2$ since Scheme $2$ generates samples from the random variable $\int_0^{\infty} f(y) p_t(y|X) dy$.

We note that there is numerical instability for large $\beta^S$ for both schemes, especially over long time horizons.  One must use a small time-step to avoid this instability.  The instability is caused by the exponential growth terms $\frac{1}{2} ( \beta^S \sigma_0(X_t) )^2 k (k-1) v_k(t)$ and $\frac{1}{2} ( \beta^S \sigma_0(X_t) )^2 k (k-1) u_k(t)$.  When one is only interested in calculating the LLN approximation (\ref{firstorder}) via the moment system (\ref{MomentSystemLLN}), the following transformed moments significantly reduce instability by removing the exponential growth term:
\begin{eqnarray*}
w_k(t) = \exp\left(- \frac{1}{2} (\beta^S)^2 k (k-1) \int_0^t \sigma_0(X_s)^2 ds \right) u_k(t).
\end{eqnarray*}
However, when interested in solving (\ref{MomentSystemLLN}) and (\ref{MomentSystem}) in conjunction, there is no simple transformation.  The best approach is to solve (\ref{MomentSystem}) with a sufficiently small time step such that it is stable and then solve for the transformed fluctuation moments $\tilde{w}_k(t) = \exp(- \frac{1}{2} (\beta^S)^2 k (k-1) \int_0^t \sigma_0(X_s)^2 ds) v_k(t)$ with a larger time step.

Figure \ref{figTruncation} compares the approximate loss distribution according to (\ref{ApproxMain}) for different truncation levels $K$ of the fluctuation moment system (\ref{MomentSystem}).  We use a time step of $0.005$ and produce samples from $X$ using an Euler scheme.  The approximate loss distribution converges very rapidly in terms of the truncation level.  This conforms with previous numerical studies of the LLN moment system (\ref{MomentSystemLLN}) which also demonstrated its quick convergence rate; see \citeasnoun{GieseckeSpiliopoulosSowersSirigano2012}.

\begin{figure}[t!]
\begin{center}
\includegraphics[scale=0.7]{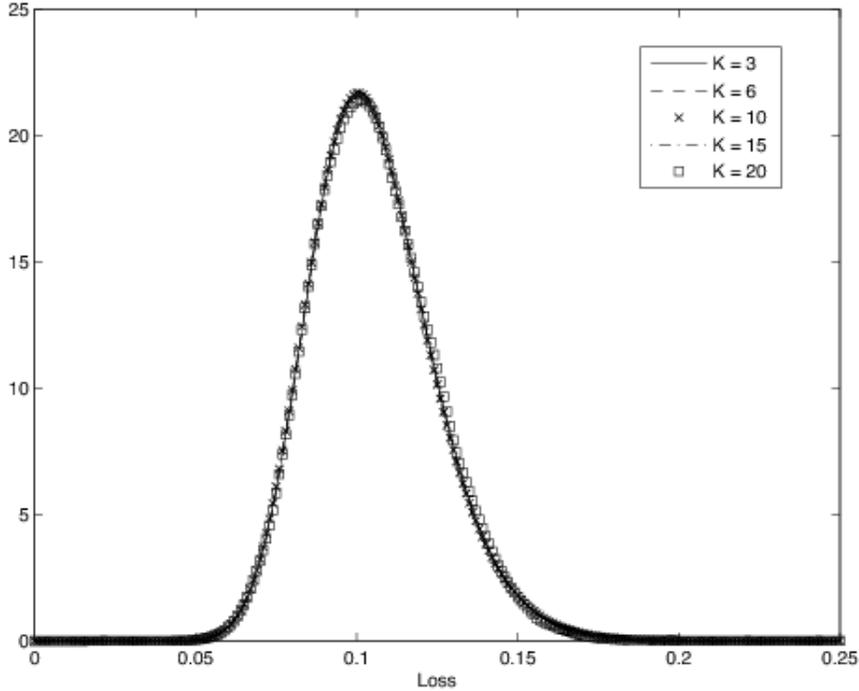}
\caption{Comparison of approximate loss distribution for different truncation levels at $T = 0.5$. Parameter case is $\sigma = 0.9, \alpha = 4, \lambda_0 = \bar{\lambda} = 0.2, \beta^C = 1,$ and $ \beta^S = 1$.  The systematic risk $X$ is an OU process with mean $1$, reversion speed $2$, volatility $1$, and initial value $1$.  Approximate loss distribution computed using the conditionally semi-analytic Scheme 2.  The fluctuation moments are truncated at level $K$ and the LLN moments are truncated at level $3 K$.}
\label{figTruncation}
\end{center}
\end{figure}

We now study the validity of the second-order approximation (\ref{ApproxMain}) by comparing it against the actual loss distribution from simulating the true finite system.  In the following numerical studies, we choose a time step of $0.005$ and a truncation level of $K = 6$ for both schemes described above.  A time step of $0.005$ is used for simulating the finite system.  Samples from $X$ are produced using an Euler scheme.  Figure \ref{fig2} compares the approximate loss distribution according to (\ref{ApproxMain}) with the actual loss distribution for $\beta^C = 0$ and $\beta^S = 1$, for each of several portfolio sizes.  The approximate loss distribution is extremely accurate, even for a very small pool with only $N = 250$.  We also observe that the approximation accurately captures the tails of the actual loss distribution. Figure \ref{fig3} compares the approximate loss distribution with the actual loss distribution for $\beta^C =1$ and $\beta^S = 1$.  The approximate loss distribution is again accurate, although not as accurate as when $\beta^C = 0$.  We also show in both Figures \ref{fig2} and \ref{fig3} the first-order LLN approximation (\ref{firstorder}).  It is clear that the second-order approximation has increased accuracy, especially for smaller portfolios and in the tail of the distribution.  Finally, Figure \ref{VaR} shows a comparison for the $95$ and $99$ percent value at risk (VaR) between the actual loss, LLN approximation (\ref{firstorder}), and approximation (\ref{ApproxMain}) for a pool of $N = 1,000$.  The approximation for the VaR based on (\ref{ApproxMain}) is significantly more accurate than the LLN approximation for the VaR.

\begin{figure}[t]
\begin{center}
\includegraphics[scale=0.8]{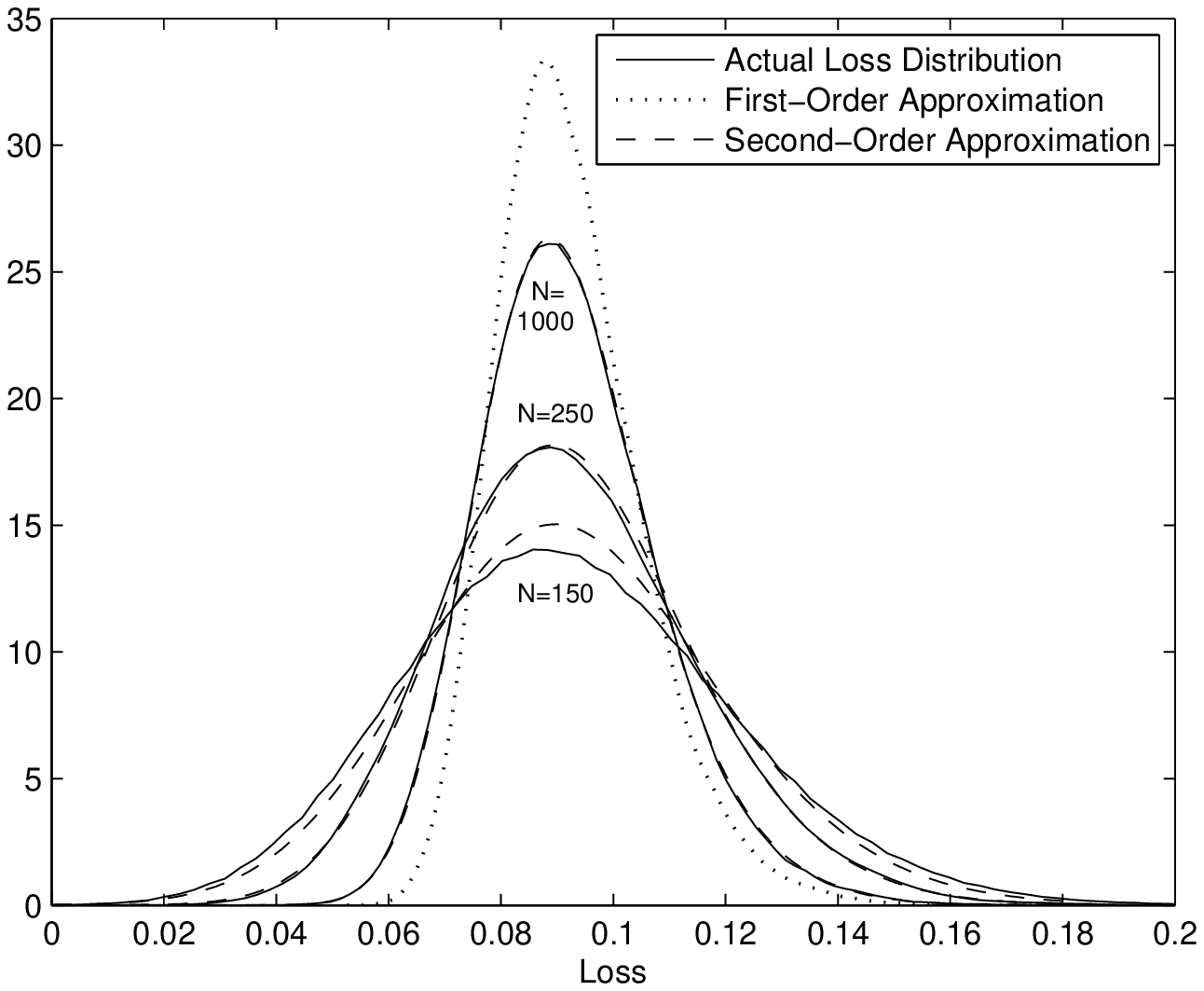}
\caption{Comparison of approximate loss distribution and actual loss distribution in the finite system at $T = 0.5$. Parameter case is $\sigma = 0.9, \alpha = 4, \lambda_0 = \bar{\lambda} = 0.2, \beta^C = 0$, and $ \beta^S = 1$.  The systematic risk $X$ is an OU process with mean $1$, reversion speed $2$, volatility $1$, and initial value $1$.  Approximate loss distribution computed using the conditionally semi-analytic Scheme 2.}
\label{fig2}
\end{center}
\end{figure}

\begin{figure}[t]
\begin{center}
\includegraphics[scale=0.8]{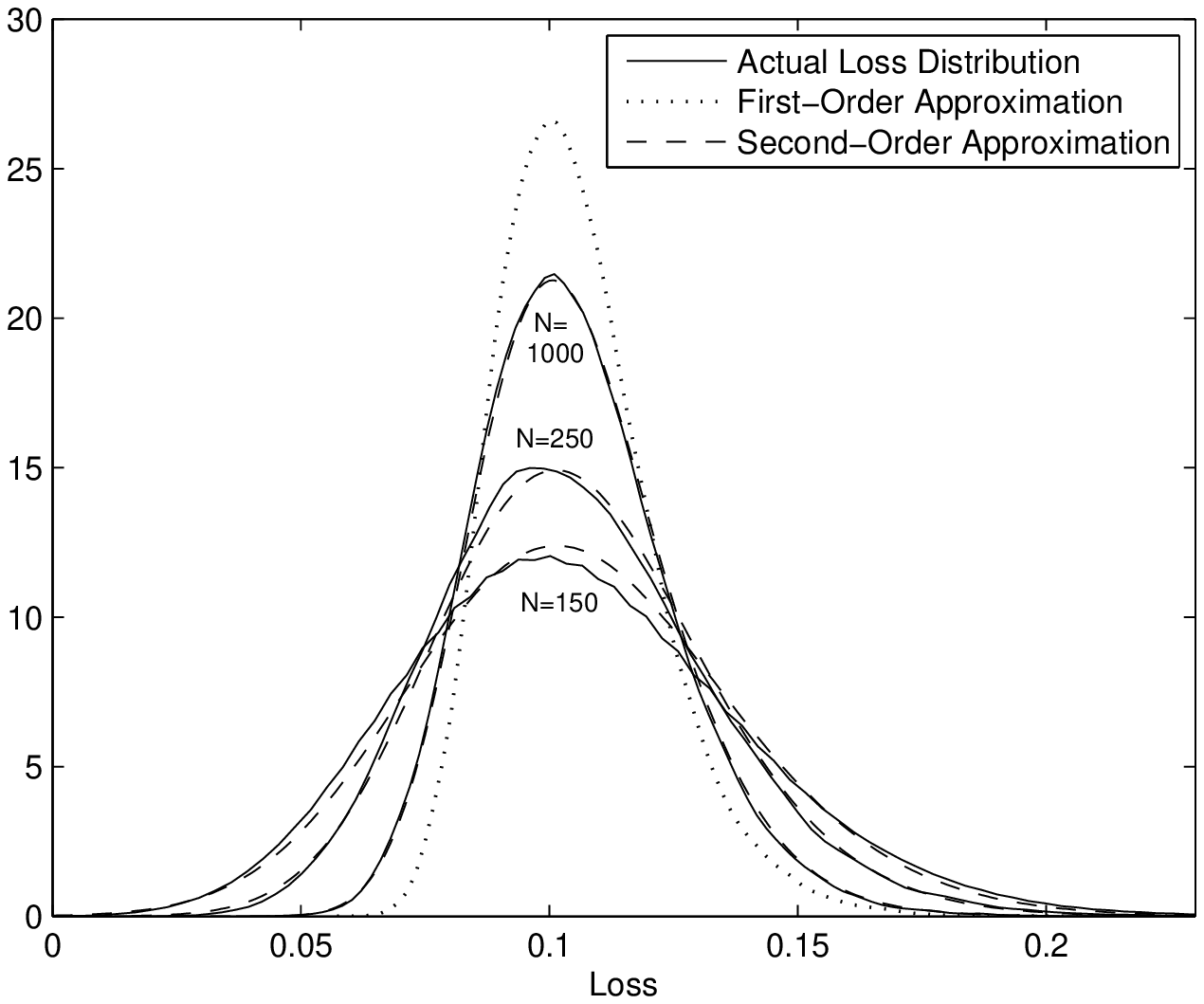}
\caption{Comparison of approximate loss distribution and actual loss distribution in the finite system at $T = 0.5$. Parameter case is $\sigma = 0.9, \alpha = 4, \lambda_0 = \bar{\lambda} = 0.2, \beta^C = 1,$ and $ \beta^S = 1$.  The systematic risk $X$ is an OU process with mean $1$, reversion speed $2$, volatility $1$, and initial value $1$.  Approximate loss distribution computed using the conditionally semi-analytic Scheme 2.}
\label{fig3}
\end{center}
\end{figure}

\begin{figure}[t]
\begin{center}
\includegraphics[scale=0.8]{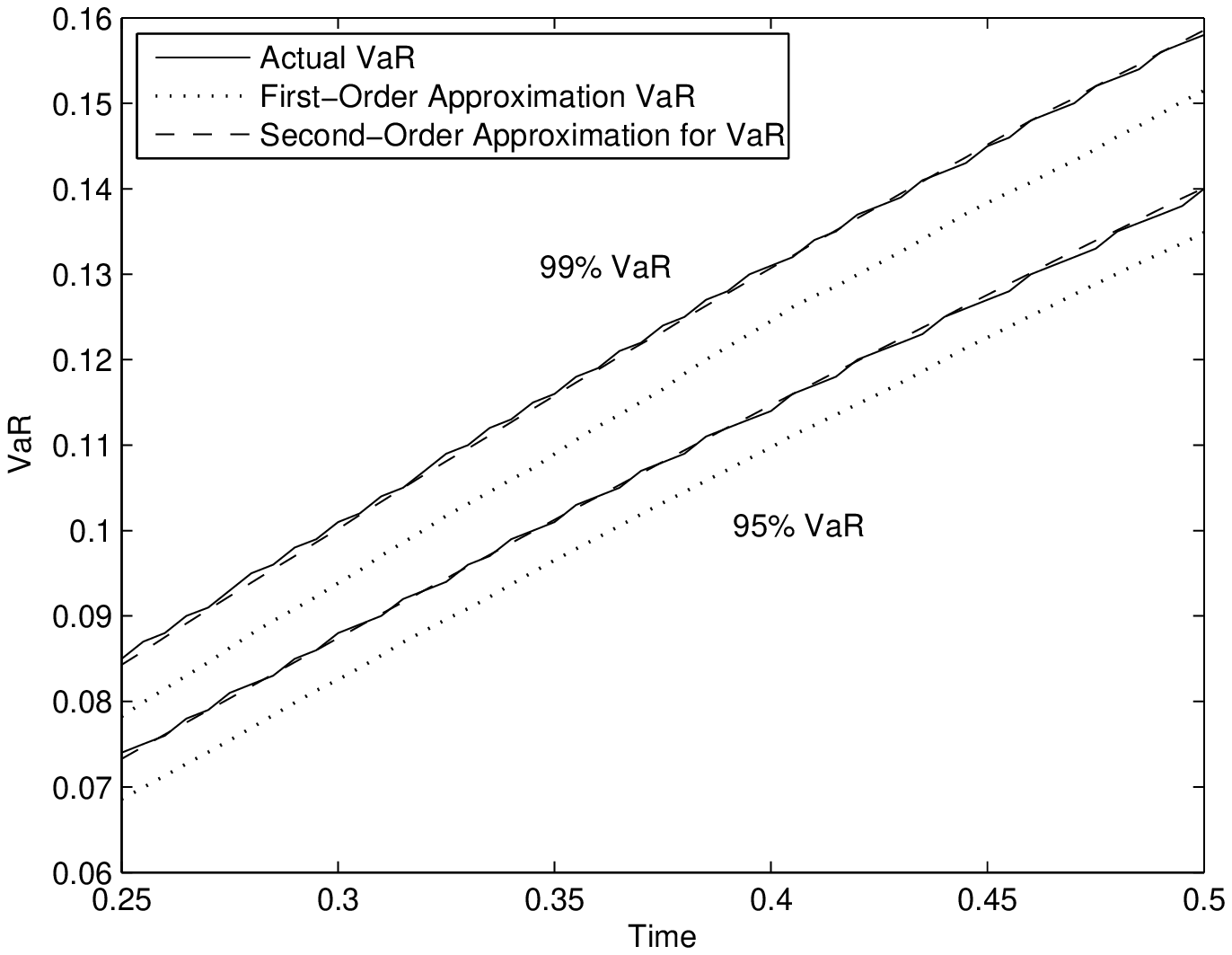}
\caption{Comparison of approximate and actual value at risks for $N = 1,000$. Parameter case is $\sigma = 0.9, \alpha = 4, \lambda_0 = \bar{\lambda} = 0.2, \beta^C = 1,$ and $ \beta^S = 1$. The systematic risk $X$ is an OU process with mean $1$, reversion speed $2$, volatility $1$, and initial value $1$. Approximate loss distribution computed using the conditionally semi-analytic Scheme 2.}
\label{VaR}
\end{center}
\end{figure}

Appendix \ref{app} provides additional numerical results on the performance of the numerical schemes.  In particular, we demonstrate the advantages of the second-order approximation over direct simulation of the finite system (\ref{E:main}).  For a fixed computational time, the standard error for the second-order approximation is several orders of magnitude smaller than the standard error for the finite system simulation.

\subsection{Approximate Loss Process}
One of the significant advantages of the fluctuation analysis presented here is the dynamic approximation it provides for the entire loss {\it process}.  An approximation of the loss process may be used to estimate the probability that the loss per month never exceeds a certain amount, or the density of the hitting time for the loss exceeding a certain level.

One can generate a sample skeleton of the approximate loss process according to the right hand side of (\ref{ApproxMain}) using Scheme 1. Although Scheme 2 gives the loss distribution at all time horizons, it does not directly yield a skeleton of the approximate loss process. However, due to the process being conditionally Gaussian, one can use a slightly modified form of Scheme 2 to simulate such a skeleton on as fine a time grid as desired. For small $\beta^S$, one can expect that Scheme $2$ will be more computationally efficient than Scheme $1$ (see Appendix \ref{app} for numerical results).

To simulate skeletons for the moments (\ref{MomentSystem}) of the fluctuation limit, one can use the following method.  Let $\mathbf{v}_0 = 0$ (the general case is very similar).   As usual, begin by simulating a path from the systematic risk process $X$.  Given a path of $X$, we then simulate the loss at $t = T$ from the conditionally Gaussian distribution given by the formula (\ref{SemiAnalyticVariance}).  To simulate at $0<s<T$, we only need the covariance matrix $\textrm{Cov} [ \mathbf{v}_T, \mathbf{v}_s | \mathcal{V}_{T}]$.  One can continue to simulate at grid points of one's choice.  For instance, one can simulate at time $s_1$ where  $s < s_1 < T$ given the covariance matrices  $\textrm{Cov} [ \mathbf{v}_T, \mathbf{v}_s | \mathcal{V}_{T}]$, $\textrm{Cov} [ \mathbf{v}_T, \mathbf{v}_{s_1} | \mathcal{V}_{T} ]$, and $\textrm{Cov} [ \mathbf{v}_s, \mathbf{v}_{s_1} | \mathcal{V}_{T}]$.  These covariance matrices are easily computed using the fundamental solution (\ref{fundamentalX}):
\begin{align*}
\Sigma(\tau_1, \tau_2)  = \textrm{Cov} [ \mathbf{v}_{\tau_1}, \mathbf{v}_{\tau_2} | \mathcal{V}_{T} ] = \Psi(\tau_1) \big{[} \int_0^{\tau_1 \wedge \tau_2}  \Psi^{-1}(s) \Sigma_{\mart}(s) \big{(} \Psi^{-1}(s) \big{)}^{\top} ds \big{]} \Psi(\tau_2)^{\top}.
\end{align*}
To simulate $\mathbf{v}_s$ given $\mathbf{v}_{\tau_1}$ and $\mathbf{v}_{\tau_2}$ where $\tau_1 < s < \tau_2$, we generate a sample from the multivariate Gaussian distribution $\mathcal{N}(\tilde{\mu}, \tilde{\Sigma})$ where $\tilde{\mu} = \Sigma_1 \Sigma_2^{-1} \mathbf{Z}$ and $\tilde{\Sigma} = \Sigma(s,s) - \Sigma_1 \Sigma_2^{-1} \Sigma_1$. The matrices $\Sigma_1, \Sigma_2,$ and $\mathbf{Z}$ are
\begin{align*}
\Sigma_1&=
 \begin{bmatrix}
  \Sigma(s, \tau_1) & \Sigma(s, \tau_2)
 \end{bmatrix}
\\
\Sigma_2&=
 \begin{bmatrix}
  \Sigma(\tau_1, \tau_1) & \Sigma(\tau_1, \tau_2)   \\
\Sigma(\tau_1, \tau_2)  & \Sigma(\tau_2, \tau_2)
 \end{bmatrix}
\\
\mathbf{Z}&=
 \begin{bmatrix}
 \mathbf{v}_{\tau_1} \\
\mathbf{v}_{\tau_2}
 \end{bmatrix}
\end{align*}

It is worthwhile to note that this scheme requires only a single computation of the fundamental solution $\Psi^m(t)$ for each path $X^m$.  Given $\Psi^m(t)$ for $0 \leq t \leq T$, one can generate as many skeletons of the fluctuation moments as desired for that particular $X^m$ via the aforementioned method. In contrast, Scheme 1 requires one to resolve the system of SDEs (\ref{MomentSystem}) for each new realization.  In particular, this suggests that this scheme will be more efficient than Scheme $1$ for small $\beta^S$.

Once skeletons from the fluctuation limit have been simulated, skeletons of the approximate loss process can be produced by combining these fluctuation paths with the law of large numbers limit process given by (\ref{MomentSystemLLN}) and using the approximation formula (\ref{ApproxMain}).

\subsection{Nonhomogeneous Portfolio}
The moment method can be extended to a nonhomogeneous portfolio.  Define the moments $v_k(t,  \pp) = \int_{\mathbb{R}^{+}} \lambda^k \bar \Xi_t(d \lambda, \pp) $ and $u_k(t,  \pp) = \int_{\mathbb{R}^{+}} \lambda^k \bar\mu_t(d \lambda,\pp)$. Then
\begin{align*}
d v_{k}(t,  \pp)&=\beta^{C} k u_{k-1}(s,  \pp) \left( \int_{\PP} v_{1}(t, \pp) d \pp  \right) dt+\left[\frac{1}{2}\sigma^{2} k(k-1)+\alpha\bar{\lambda} k+k \beta^{C} \int_{\PP} u_{1}(s, \pp) d \pp \right]v_{k-1}(t, \pp)dt\nonumber\\
&+v_{k}(t, \pp)\left[-k \alpha+k \beta^{S} b_{0} (X_{t})+\frac{1}{2} k (k-1)\left(\beta^{S}\sigma_{0}(X_{t})\right)^{2}\right]dt
-v_{k+1}(t, \pp)dt\nonumber\\
&+k \beta^{S} \sigma_{0} (X_{t})v_{k}(t, \pp)dV_{t}+\bar{\mart}_{k}(t, \pp), \notag \\
v_k(0, \pp) &= \int_0^{\infty} \lambda^k \bar \Xi_{0}(d \lambda, \pp) .
\end{align*}
The covariation $\big{[} d \bar{\mart}_{k}(t, \pp_1), d \bar{\mart}_{j}(t, \pp_2) \big{]}$ for $\pp_1, \pp_2$ $\in$ $\PP$ is given by
\begin{align*}
\big{[} d \bar{\mart}_{k}(t, \pp_1), d \bar{\mart}_{j}(t, \pp_2) \big{]} &= \mathbf{1}_{\pp_1 = \pp_2} \big{(} \sigma^2 k j u_{k+j-1}(t, \pp_1) + u_{k+j+1}(t, \pp_1)
- \beta^C k u_{k-1}(t, \pp_1) u_{j+1} (t, \pp_1) \notag \\
&- \beta^C j u_{j-1}(t, \pp_1) u_{k+1}(t, \pp_1) + (\beta^C)^2 k j u_{k-1}(t, \pp_1) u_{j-1}(t, \pp_1) u_1(t, \pp_1) \big{)} dt.
\end{align*}
The LLN moments are
\begin{align*}
d u_k(t, \pp) &= u_k(t, \pp) \left[-  \alpha  k+ \beta^S b_0(X_t) k +\frac{1}{2} (\beta^S)^2 \sigma_0^2(X_t) k(k-1)\right] dt - u_{k+1}(t, \pp) dt \\
&+ u_{k-1}(t, \pp) \left[ 0.5 \sigma^2 k (k-1) + \alpha \bar{\lambda} k + \beta^C k \int_{\PP} u_1(t, \pp) d \pp \right] dt  + \beta^S \sigma_0(X_t) k u_k (t, \pp) d V_t\\
u_k(0, \pp) &= \int_0^{\infty} \lambda^k \bar\mu_0(d \lambda,\pp).
\end{align*}

For numerical implementation, one must discretize the parameter space $\PP$.  As we allow more parameters to vary, $\PP$ can become high dimensional.  This can become computationally expensive.

\section{Proof of Theorem \ref{T:MainCLT}}\label{S:ProofMainCLT}
In this section we prove the fluctuation Theorem \ref{T:MainCLT}. The methodology of the proof goes as follows. After some  preliminary computations,
we obtain a convenient formulation of the equation that $\{\Xi^{N}_{t}\}$ satisfies, see (\ref{Eq:PrelimitEquation}). Some terms in this equation will vanish in the limit as
$N\rightarrow\infty$; this is Lemma \ref{L:AuxilliaryBounds}. Based on tightness of the involved processes (the topic of Section \ref{S:Tightness}) and continuity properties of the operators involved, we can then pass to the limit
and thus identify the limiting equation. The limit process satisfies in the weak form another stochastic evolution, which has a unique solution (proven in Section \ref{S:Uniqueness}).
The difficulty is to identify a rich enough space where tightness and uniqueness can be simultaneously proven. This is not trivial in our case, because the coefficients are not bounded
and the equation degenerates (coefficients of the highest derivatives are not bounded away from zero). It turns out that the appopriate space in which one can prove both tightness and uniqueness
is a weighted Sobolev space, introduced in \citeasnoun{Purtukhia1984} and further generalized in  \citeasnoun{GyongiKrylov1992} to study stochastic partial differential equations with unbounded coefficients,
and are briefly reviewed in Section \ref{S:SobolevSpace}.

An application of It\^{o}'s formula shows that for $f\in C^{2}_{b}(\hat \PP)$,
\begin{align}
\la f, \mu^{N}_t\ra_E &= \la f, \mu^{N}_0\ra_E +\int_{0}^{t}\left\{\la \genL_1f, \mu^{N}_s\ra_E+ \la \QQ, \mu^{N}_s\ra_E
\la \genL_2f, \mu^{N}_s\ra_E+\la \genL^{X_{s}}_3 f,
\mu^{N}_s\ra_E\right\}ds\nonumber\\
&+\int_{0}^{t}\hat{A}^{N}[f](X_{s},\mu^{N}_{s})ds+ \int_{0}^{t}\la \genL_4^{X_{s}}f,\mu^{N}_{s}\ra_E dV_{s}+\la f, \mart^{N}_t\ra\label{Eq:PrelimitMu}
\end{align}
where
\begin{equation*}\label{E:limgen3}
\hat{\genA}^{N}[f](X_{s},\mu^{N}_{s}) = \sum_{n=1}^N \lambda^\NN_s\jump^{f}_\NN(s)\dfi^{N,n}_s
-  \lb \la \QQ,\mu^N_s\ra_E \la \genL_2 f,\mu^N_s\ra_E-\la  f,\mu^N_s\ra_E\rb,
\end{equation*}
\begin{equation*} \jump^f_\NN(s) = \frac{1}{N}\sum_{n'=1}^N \lb f\left(\pp^{N,n'},\lambda^{N,n'}_s+ \frac{\beta^C_{N,n'}}{N}\right)-f\left(\pp^{N,n'},\lambda^{N,n'}_s\right)\rb\dfi^{N,n'}_s-\frac1Nf\left(\pp^\NN,\lambda^\NN_s\right) \end{equation*}
for all $s\ge 0$, $N\in \N$ and $n\in \{1,2,\dots, N\}$,
\begin{equation*}\label{E:limgen4}
\la f,\mart^{N}_t\ra =  \frac{1}{N}\sum_{n=1}^N\int_{0}^{t}\sigma_{\NN}\sqrt{\lambda_{t}^{\NN}} \frac{\partial f}{\partial \lambda}(\hat \pp^{\NN}_s)\dfi^{N,n}_s dW^{n}_{s}+\sum_{n=1}^{N}\int_{0}^{t}\jump^f_\NN(s)d \mathcal{N}^{\NN}_{s}
\end{equation*}
and
\begin{equation*}
\mathcal{N}^{\NN}_{s}=(1-\dfi^\NN_s) - \int_{0}^s \lambda^\NN_r \dfi^\NN_r dr.
\end{equation*}

By subtracting  (\ref{Eq:LimitMu}) from (\ref{Eq:PrelimitMu}) we find that $\Xi^{N}_{t}$ satisfies the equation
\begin{align*}
\la f,\Xi^{N}_t\ra &= \la f,\Xi^{N}_0\ra+ \int_{0}^{t}\la \genL_1f+ \genL^{X_{s}}_3 f,
\Xi^{N}_s\ra ds+\int_{0}^{t}\left[ \la \QQ,\bar \mu_s\ra
\la \genL_2f,\Xi^{N}_s\ra+\la \QQ,\Xi^{N}_s\ra
\la \genL_2f, \mu^{N}_s\ra\right]ds\nonumber\\
&+\int_{0}^{t}\sqrt{N}\hat{A}^{N}[f](X_{s},\mu^{N}_{s})ds+\int_{0}^{t}\la \genL^{X_{s}}_4 f,\Xi^{N}_s\ra dV_{s}+\la f, \sqrt{N}\mart^{N}_t\ra.
\end{align*}
In order to simplify the calculations later on, for each $f\in C^{2}_{b}(\hat \PP)$, $t\ge 0$, $N\in \N$ and $n\in \{1,2,\dots, N\}$, we define the quantity
\begin{equation*}
\label{E:effcc0} \tilde \jump^f_\NN(t) \Def \frac{1}{N}\sum_{m=1}^N \beta^C_{N,m}\frac{\partial f}{\partial \lambda}(\hat \pp^{N,m}_t)\dfi^{N,m}_t-f\left(\pp^\NN,\lambda^\NN_t\right) =\la \genL_2 f,\mu^N_t\ra_E-f(\hat \pp^N_t).
\end{equation*}
A simple computation shows that
\begin{equation}
\left|\jump^f_\NN(t)-\frac1N\tilde \jump^f_\NN(t)\right|\le \frac{\KK^2}{N^2}\left\|\frac{\partial^2 f}{\partial \lambda^2}\right\|_C. \label{Eq:CoarseGrainJumpTerm}\end{equation}

We can rewrite the stochastic evolution equation that $\Xi^{N}$ satisfies as follows:
\begin{eqnarray}
\la f,\Xi^{N}_t\ra &=& \la f,\Xi^{N}_0\ra+ \int_{0}^{t}\la \genL_1f+ \genL^{X_{t}}_3 f,
\Xi^{N}_s\ra ds+\int_{0}^{t}\left[ \la \QQ,\bar \mu_s\ra
\la \genL_2f,\Xi^{N}_s\ra+\la \QQ,\Xi^{N}_s\ra
\la \genL_2f, \mu^{N}_s\ra\right]ds\nonumber\\
&+&\int_{0}^{t}\la \genL^{X_{s}}_4 f,\Xi^{N}_s\ra dV_{s}+\la f, \sqrt{N}\tilde{\mart}^{N}_t\ra+R^{N}_{t,0}\label{Eq:PrelimitEquation}
\end{eqnarray}
where
\begin{align*}
\la f, \sqrt{N}\tilde{\mart}^{N}_t\ra &=  \sqrt{N}\left(\frac{1}{N}\sum_{n=1}^N\int_{0}^{t}\sigma_{\NN}\sqrt{\lambda_{t}^{\NN}} \frac{\partial f}{\partial \lambda}(\hat \pp^{\NN}_s)\dfi^{N,n}_s dW^{n}_{s}+\frac{1}{N}\sum_{n=1}^{N}\int_{0}^{t}\tilde{\jump}^f_\NN(s)d \mathcal{N}^{\NN}_{s}\right)\\
R^{N}_{t,0}&=\int_{0}^{t}\sqrt{N}\hat{A}^{N}[f](X_{s},\mu^{N}_{s})ds+\sum_{n=1}^{N}\int_{0}^{t}\sqrt{N}\hat{B}^{\NN}[f](s)d\mathcal{N}^{\NN}_{s}\\
\hat{B}^{\NN}[f](s)&=\jump^f_\NN(s)-\frac{1}{N}\tilde{\jump}^f_\NN(s).
\end{align*}

The term $R^{N}_{t,0}$ turns out to vanish in the limit as the following lemma shows.
\begin{lemma}\label{L:AuxilliaryBounds}
For any $t\in[0,T]$ and any $f\in C^{2}_{b}(\hat{\PP})$, there is a constant $C_{0}$, independent of $n,N$ such that
\begin{equation*}
\BE\left[\int_{0}^t \sqrt{N}\left|\hat{\genA}^{N}[f](X_{r},\mu^{N}_{r})\right|dr\right]\leq \frac{C_{0}}{\sqrt{N}}t
\end{equation*}
and
\begin{equation*}
\sup_{0\leq t\leq T}\BE\left[\sum_{n=1}^{N}\int_{0}^{t}\sqrt{N}\hat{B}^{\NN}[f](s)d\mathcal{N}^{\NN}_{s}\right]^{2}\leq \frac{C_{0}}{N^{2}}T.
\end{equation*}
Moreover, we have that
\begin{equation*}
\lim_{N\rightarrow\infty}\BE\sup_{0\leq t\leq T}|R^{N}_{t,0}|^{2}=0.
\end{equation*}
Lastly, regarding the conditional (on $\mathcal{V}_{t}$) covariation of the martingale term $\la f, \sqrt{N}\tilde{\mart}^{N}_t\ra$ we have
\begin{eqnarray}
\mathrm{Cov}\left[\la f, \sqrt{N}\tilde{\mart}^{N}_{t_1}\ra, \la g, \sqrt{N}\tilde{\mart}^{N}_{t_2} \ra\Big| \mathcal{V}_{t_1 \vee t_2}\right]&=&\BE\left[\int_{0}^{t_1 \wedge t_2}\left[\la \genL_5 (f,g),\mu^{N}_s\ra+ \la \genL_6 (f,g), \mu^{N}_s\ra+
\right.\right.\nonumber\\
& &\left.\left. +\la \genL_2 f, \mu^{N}_s\ra\la \genL_2 g, \mu^{N}_s\ra\la \QQ, \mu^{N}_s\ra-\right.\right.\nonumber\\
& &\left.\left.-\la \genL_7 g, \mu^{N}_s\ra\la \genL_2 f, \mu^{N}_s\ra-\la \genL_7 f, \mu^{N}_s\ra\la \genL_2 g, \mu^{N}_s\ra\right]ds\Big|\mathcal{V}_{t_1 \vee t_2}\right].\label{L:PrelimitCovariance}
\end{eqnarray}
\end{lemma}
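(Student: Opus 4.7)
The proof rests on two ingredients: the pointwise estimate (\ref{Eq:CoarseGrainJumpTerm}), which decomposes $\jump^f_\NN(s) = \frac{1}{N}\tilde{\jump}^f_\NN(s) + \hat B^\NN(s)$ with $|\hat B^\NN(s)|\le \KK^2\|\partial_\lambda^2 f\|_C/N^2$, and a uniform a priori moment bound $\sup_{N,n,s\le T}\BE[(\lambda^\NN_s)^2]<\infty$ that follows from the analysis in \citeasnoun{GieseckeSpiliopoulosSowersSirigano2012} (Condition \ref{A:regularity} gives bounded parameters, and a Gronwall argument on the intensity SDE propagates this to uniform moment bounds on $\la\QQ,\mu^N\ra$). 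Part (4) additionally uses the independence of $\{W^n,\ee_n\}_n$ from $V$ in Condition \ref{A:regularity}, which lets us transport the martingale property to the regular conditional probability $\BP_V$ given $\mathcal{V}$.

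For the first inequality, substitute the decomposition into $\sum_n \lambda^\NN_s\jump^f_\NN(s)\dfi^\NN_s$. Using $\tilde{\jump}^f_\NN = \la \genL_2 f,\mu^N_s\ra - f(\hat\pp^\NN_s)$ together with $\frac{1}{N}\sum_n\lambda^\NN_s\dfi^\NN_s = \la\QQ,\mu^N_s\ra$, the mean-field piece $\frac{1}{N}\sum_n\lambda^\NN_s\tilde{\jump}^f_\NN(s)\dfi^\NN_s$ reduces to $\la\QQ,\mu^N_s\ra\la\genL_2 f,\mu^N_s\ra$ minus a one-body average that cancels the bracketed subtraction in the definition of $\hat\genA^N$, leaving only the residue $\sum_n \lambda^\NN_s\hat B^\NN(s)\dfi^\NN_s$. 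By (\ref{Eq:CoarseGrainJumpTerm}) this residue is dominated in absolute value by $(\KK^2\|\partial_\lambda^2 f\|_C/N)\la\QQ,\mu^N_s\ra$; multiplying by $\sqrt N$, taking expectation, and using the bound on $\BE[\la\QQ,\mu^N_s\ra]$ yields the $C_0 t/\sqrt N$ estimate. For the second inequality, the martingales $\mathcal{N}^\NN$ have almost surely disjoint jumps and are therefore pairwise orthogonal, so the Ito isometry combined with (\ref{Eq:CoarseGrainJumpTerm}) gives
\begin{equation*}
\BE\Big[\sum_{n=1}^N \int_0^t\sqrt N\hat B^\NN(s)\,d\mathcal{N}^\NN_s\Big]^2 = \sum_{n=1}^N N\,\BE\int_0^t |\hat B^\NN(s)|^2\lambda^\NN_s\dfi^\NN_s\,ds \le \frac{\KK^4\|\partial_\lambda^2 f\|_C^2}{N^2}\int_0^t\BE[\la\QQ,\mu^N_s\ra]\,ds,
\end{equation*}
which is $O(1/N^2)$ uniformly in $t\le T$.

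Claim (3) follows immediately. The drift part of $R^N_{t,0}$ obeys $\sup_{t\le T}|\int_0^t\sqrt N\hat\genA^N(r)dr| \le (C/\sqrt N)\int_0^T\la\QQ,\mu^N_r\ra dr$ by the pointwise bound extracted in the proof of (1); squaring and using the uniform second-moment bound on $\la\QQ,\mu^N\ra$ gives $\BE\sup_{t\le T}|\int_0^t\sqrt N\hat\genA^N dr|^2 = O(1/N)$. The martingale part is controlled by Doob's $L^2$-maximal inequality together with the second inequality, giving $\BE\sup_{t\le T}|\sum_n\int_0^t\sqrt N\hat B^\NN d\mathcal{N}^\NN|^2\le 4C_0/N^2$. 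Hence $\BE\sup_{t\le T}|R^N_{t,0}|^2\to 0$.

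For (4), we work under $\BP_V$. Conditioning on $\mathcal{V}_\infty$ preserves the law of the $W^n$ as i.i.d. standard Brownian motions and of the $\ee_n$ as i.i.d. standard exponentials independent of the $W^n$ (Condition \ref{A:regularity}), so $X$ becomes a fixed continuous function of time while the intensity SDE retains its Brownian and jump driving noises. The stochastic integrals defining $\la f,\sqrt N\tilde{\mart}^N\ra$ thus remain $\BP_V$-martingales, since both Brownian integrals and compensated-counting-process integrals preserve martingality under independent enlargement. Computing its predictable quadratic covariation with $\la g,\sqrt N\tilde{\mart}^N\ra$, orthogonality of distinct $W^n$'s, of distinct $\mathcal{N}^\NN$'s, and of Brownian versus pure-jump martingales eliminates all cross-terms, leaving a diffusive contribution $\frac{1}{N}\sum_n\sigma_\NN^2\lambda^\NN_s(\partial_\lambda f)(\partial_\lambda g)(\hat\pp^\NN_s)\dfi^\NN_s\,ds = \la\genL_5(f,g),\mu^N_s\ra\,ds$ and a jump contribution $\frac{1}{N}\sum_n\tilde{\jump}^f_\NN(s)\tilde{\jump}^g_\NN(s)\lambda^\NN_s\dfi^\NN_s\,ds$. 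Expanding the product $(\la\genL_2 f,\mu^N\ra - f)(\la\genL_2 g,\mu^N\ra - g)$ and averaging against $\lambda\,d\mu^N_s$ produces exactly the four terms $\la\genL_6(f,g),\mu^N\ra + \la\genL_2 f,\mu^N\ra\la\genL_2 g,\mu^N\ra\la\QQ,\mu^N\ra - \la\genL_7 f,\mu^N\ra\la\genL_2 g,\mu^N\ra - \la\genL_7 g,\mu^N\ra\la\genL_2 f,\mu^N\ra$. Since both martingales start from $0$, their $\mathcal{V}_{t_1\vee t_2}$-conditional covariance coincides with the $\mathcal{V}_{t_1\vee t_2}$-conditional expectation of the predictable covariation at time $t_1\wedge t_2$, which is precisely (\ref{L:PrelimitCovariance}). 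The main delicacy is rigorously justifying the $\BP_V$-martingale property: this reduces to a standard independent-enlargement argument made available by the independence of $\{W^n,\ee_n\}_n$ from $V$ in Condition \ref{A:regularity}.
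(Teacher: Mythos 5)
Your proof is correct and follows essentially the same approach as the paper: the first two bounds both rest on the decomposition encoded in (\ref{Eq:CoarseGrainJumpTerm}) together with the uniform moment bound of Lemma \ref{L:macrobound} and the orthogonality of the $\mathcal{N}^\NN$'s from disjoint jumps, the $R^N_{t,0}$ claim follows from these via Doob's maximal inequality, and the covariation formula comes from expanding the predictable cross-variation of $\la f,\sqrt N\tilde\mart^N\ra$ and $\la g,\sqrt N\tilde\mart^N\ra$ term by term. The paper's proof is considerably terser (stating that the covariance formula ``follows directly by the definition of $\tilde\mart^N$ and the fact that jumps do not occur simultaneously''); you usefully spell out the cancellation in $\hat\genA^N$, the It\^o isometry computation, and especially the term-by-term expansion of $\tilde\jump^f_\NN\tilde\jump^g_\NN\lambda$ against $\mu^N$ that produces $\genL_6$, $\genL_7$, $\genL_2$, and $\QQ$, as well as the independent-enlargement justification for working under $\BP_V$ — all of which the paper leaves implicit.
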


We continue with the proof of the theorem and defer the proof of Lemma \ref{L:AuxilliaryBounds} to the end of this section. Relative compactness of the sequences
$\left\{\Xi^{\NN}_{t}, t\in[0,T]\right\}_{N\in \N}$ and $\left\{\sqrt{N}\tilde{M}^{\NN}_{t}, t\in[0,T]\right\}_{N\in \N}$ in $D_{W^{-J}_0(w,\rho)}[0,T]$ follows by
Lemmas \ref{L:ConvergenceOfMartingale} and \ref{L:XiPrelimitSpace}. Relative compactness of the sequence
$\left\{\mu^{N}_{t}, t\in[0,T]\right\}_{N\in \N}$ in $D_{E}[0,T]$ follows by Lemma 7.1 in \citeasnoun{GieseckeSpiliopoulosSowersSirigano2012}.
These imply that the sequence
\begin{equation*}
\left\{\left(\mu^{N}, \sqrt{N}\tilde{\mart}^{N}, \Xi^{N}, V\right), N\in\N \right\}
\end{equation*}
is relatively compact in $D_{E\times W^{-J}_0(w,\rho) \times W^{-J}_0(w,\rho)\times \R}[0,T]$. Let us denote by
\begin{equation*}
\left\{\left(\bar{\mu}, \bar{\mart}, \bar{\Xi},V\right)\right\}
\end{equation*}
a limit point of this sequence.  We mention here that this is a limit in distribution, so the limit point may not be defined on the same probability space as the prelimit sequence,
but nevertheless $V$ (and thus $X$), have the same distribution both in the limit and in the prelimit. Then, by (\ref{Eq:PrelimitEquation}), Lemma \ref{L:AuxilliaryBounds} and the continuity of the operators
$\mathcal{G}$ and $\genL_{4}$ we get that $\left\{\left(\bar{\mu}, \bar{\mart}, \bar{\Xi},V\right)\right\}$ will satisfy, due to Theorem 5.5 in \citeasnoun{KurtzProtter}, the stochastic evolution equation (\ref{Eq:CLT}). By Lemma \ref{L:AuxilliaryBounds}, the conditional covariation of the distribution valued martingale $\bar{\mart}$ is given by (\ref{Eq:ConditionalCovariation}).
 Uniqueness follows by Theorem \ref{T:SEE_Uniqueness}. This concludes the proof of the theorem.

We conclude this section with the proof of Lemma \ref{L:AuxilliaryBounds}.
\begin{proof}[Proof of Lemma \ref{L:AuxilliaryBounds}]
We start by recalling that Lemma 3.4 in \citeasnoun{GieseckeSpiliopoulosSowers2011} states that for each $p\ge 1$ and $T\ge 0$,
\begin{equation}
\sup_{\substack{0\le t\le T \\ N\in \N}}\frac{1}{N}\sum_{n=1}^N\BE\left[|\lambda^\NN_t|^p\right]\leq C\label{Eq:Lemma3_4inGSS}
\end{equation}
for some constant $C>0$ that is independent of $\NN$. Along with (\ref{Eq:CoarseGrainJumpTerm}), this implies that
\begin{align}
\BE\left[\int_{0}^t \sqrt{N}\left|\hat{\genA}^{N}[f](X_{r},\mu^{N}_{r})\right|dr\right]&
=\BE\left[\int_{0}^t \sqrt{N} \left|\sum_{n=1}^N \lambda^\NN_r\jump^{f}_\NN(r)\dfi^{N,n}_r
-  \frac{1}{N}\sum_{n=1}^N \lambda^\NN_r\tilde{\jump}^{f}_\NN(r)\dfi^{N,n}_r\right|dr\right]\nonumber\\
&\leq K^2\left\|\frac{\partial^2 f}{\partial \lambda^2}\right\|_C\BE\left[\int_{0}^t \frac{1}{\sqrt{N}}\left|\frac{1}{N}\sum_{n=1}^{N}\lambda^\NN_s \right|dr\right]\nonumber\\
&\leq\frac{C_{0}}{\sqrt{N}} t\nonumber
\end{align}
for some nonnegative constant $C_{0}$ that is independent of $\NN$. Then, the first statement of the lemma follows. The second statement follows similarly. In particular, using the martingale property, (\ref{Eq:CoarseGrainJumpTerm}) and (\ref{Eq:Lemma3_4inGSS}) we immediately obtain
\begin{equation*}
\BE\left[\sum_{n=1}^{N}\int_{0}^{t}\sqrt{N}\hat{B}^{\NN}[f](s)d\mathcal{N}^{\NN}_{s}\right]^{2}
\leq \frac{C_{0}}{N^{2}} t
\end{equation*}
for some constant $C_{0}>0$. The statement regarding $R^{N}_{t,0}$ follows directly by the previous computations and Doob's inequality. The statement involving the conditional  covariation of the martingale term $\la f, \sqrt{N}\tilde{\mart}^{N}_t\ra$ follows directly by the definition of
$\tilde{\mart}^{N}$ and the fact that jumps do not occur simultaneously.
\end{proof}

\section{The appropriate weighted Sobolev space}\label{S:SobolevSpace}
In this section we discuss the Sobolev space in which the fluctuations theorem is stated. Similar weighted Sobolev spaces were introduced in \citeasnoun{Purtukhia1984} and further generalized in  \citeasnoun{GyongiKrylov1992} to study stochastic partial differential equations with unbounded coefficients. These weighted spaces turn out to be convenient and well adapted to our case. See \citeasnoun{FernandezMeleard} and \citeasnoun{KurtzXiong} for the use of weighted Sobolev spaces in fluctuation analyses of other interacting particle systems.

Let $w$ and $\rho$ be smooth functions in $\hat{\PP}$ with $w\geq0$. Let $J\geq 0$ be an integer and define by $W^{J}_{0}(w,\rho)$ the  weighted Sobolev space which is the closure of $\mathcal{C}^{\infty}_{0}(\hat{\PP})$ in the norm
\begin{equation*}
\left\Vert f\right\Vert_{W^{J}_{0}(w,\rho)}=\left(\sum_{k\leq J}\int_{\hat{\pp}\in \hat{\PP}}w^{2}(\hat{\pp})\left|\rho^{k}(\hat{\pp})D^{k} f(\hat{\pp})\right|^{2}d\hat{\pp}\right)^{1/2}<\infty
\end{equation*}

For the weight functions $w$ and $\rho$ we impose the following condition, which guarantees that $W^{J}_{0}(w,\rho)$  will be a Hilbert space, see Proposition 3.10 of \citeasnoun{GyongiKrylov1992}.

\begin{condition}\label{A:AssumptionsOnWeights}
For every $k\leq J$, the functions $\rho^{k-1}D^{k}\rho$ and $w^{-1}\rho^{k}D^{k}w$ are bounded.
\end{condition}

The inner product in this space is
\begin{equation*}
\left< f,g \right>_{W^{J}_{0}(w,\rho)}=\sum_{k\leq J}\int_{\hat{\pp}\in \hat{\PP}}w^{2}(\hat{\pp})\rho^{2k}(\hat{\pp})D^{k} f(\hat{\pp})D^{k} g(\hat{\pp})d\hat{\pp}
\end{equation*}

Moreover, we denote by $W^{-J}_{0}(w,\rho)$, the dual space to $W^{J}_{0}(w,\rho)$ equipped with the norm
\[
\left\Vert f\right\Vert_{W^{-J}_{0}(w,\rho)}=\sup_{g\in W^{J}_{0}(w,\rho)}\frac{|<f,g>|}{\left\Vert g \right\Vert_{W^{J}_{0}(w,\rho)}}
\]

For notational convenience we will sometimes write $\left\Vert f\right\Vert_{J}$ and $\left\Vert f\right\Vert_{-J}$ in the place of $\left\Vert f\right\Vert_{W^{J}_{0}(w,\rho)}$
and $\left\Vert f\right\Vert_{W^{-J}_{0}(w,\rho)}$ respectively, if no confusion arises.

The coefficients of our operators have linear growth in the first order terms and quadratic growth in the second order terms. For example, we easily see that the choices $\rho(\hat{\pp})=\sqrt{1+|\hat{\pp}|^{2}}$ and $w(\hat{\pp})=\left(1+|\hat{\pp}|^{2}\right)^{\beta}$ with $\beta<-J$ satisfy the assumptions of Condition \ref{A:AssumptionsOnWeights}. We refer the interested reader to \citeasnoun{GyongiKrylov1992} for more examples on possible choices for the weight functions $(w,\rho)$.

\begin{lemma}\label{L:DomainOfDefinitionOfG}
Let Condition \ref{A:AssumptionsOnWeights} with $J+2$ in place of $J$ hold, and fix $x\in\R$ and $\mu\in E $ such that $\int_{\hat{\PP}} \left(w^{2}(\hat{\pp})\rho^{2}(\hat{\pp})\right)^{-1}\mu(d\hat{\pp})<\infty$.
Then the operator $\mathcal{G}_{x,\mu}$ is  a linear map from $W_0^{J+2}(w,\rho)$ into $W_0^{J}(w,\rho)$ and for all $f\in W_0^{J+2}(w,\rho)$, there exists a constant $C>0$ such that
\begin{equation*}
\left\Vert \mathcal{G}_{x,\mu}f \right\Vert^{2}_{J}\leq C\left(1+|b_{0}(x)|^{2}+|\sigma_{0}(x)|^{4}+\la \QQ,\mu\ra^{2}\right)\left\Vert f\right\Vert^{2}_{J+2}
\end{equation*}
Moreover, the operator $\genL^{x}_{4}$ is  a linear map from $W_0^{J+1}(w,\rho)$ into $W_0^{J}(w,\rho)$ and for all $f\in W_0^{J+1}(w,\rho)$
\begin{equation*}
\left\Vert \genL^{x}_{4}f \right\Vert^{2}_{J}\leq C |\sigma_{0}(x)|^{2}\left\Vert f\right\Vert^{2}_{J+1}\leq C |\sigma_{0}(x)|^{2}\left\Vert f\right\Vert^{2}_{J+2}
\end{equation*}
\end{lemma}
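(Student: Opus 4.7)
The plan is to bound $\|\mathcal{G}_{x,\mu}f\|_{J}$ via the triangle inequality applied to the four pieces $\genL_{1} f$, $\genL^{x}_{3} f$, $\la \QQ,\mu\ra_{E}\,\genL_{2} f$, and $\la \genL_{2} f,\mu\ra_{E}\,\QQ$ of $\mathcal{G}_{x,\mu}f$, estimating each separately so that the coefficient $1+|b_{0}(x)|^{2}+|\sigma_{0}(x)|^{4}+\la \QQ,\mu\ra^{2}$ in the target inequality arises from matching each $x$- or $\mu$-dependence to the corresponding piece (the $\genL^{x}_{3}$ piece carries $|b_{0}(x)|^{2}$ and $|\sigma_{0}(x)|^{4}$, the scalar $\la \QQ,\mu\ra_{E}$ carries $\la \QQ,\mu\ra^{2}$, and the remaining pieces contribute the constant $1$). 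The bound for $\genL^{x}_{4}$ is then a direct special case of the same argument with a single first-order monomial of linear growth times $|\sigma_0(x)|$.

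The core step is the estimate for the local differential pieces. Each of $\genL_{1}$, $\genL_{2}$, $\genL^{x}_{3}$, and $\genL^{x}_{4}$ is a finite linear combination of monomials $a(\hat{\pp})\,\partial^{m}_{\lambda}f(\hat{\pp})$ with $m\in\{0,1,2\}$, where $a$ is a polynomial in $\hat{\pp}$ of $\lambda$-degree at most $m$ (linear growth at first-order coefficients, quadratic at second-order ones) and whose parameter-dependent scalar is bounded by $C(1+|b_{0}(x)|+|\sigma_{0}(x)|^{2})$ thanks to the uniform bound $K$ on parameters from Condition \ref{A:regularity}. Applying the Leibniz rule,
\begin{equation*}
D^{\alpha}(a\,\partial^{m}_{\lambda}f)=\sum_{\beta\le\alpha}\binom{\alpha}{\beta}\,(D^{\beta}a)\,(\partial^{m}_{\lambda}D^{\alpha-\beta}f),\qquad |\alpha|\le J,
\end{equation*}
and noting that each $\lambda$-derivative of $a$ decreases its $\lambda$-degree by one, while derivatives in parameter directions preserve it but terminate once the bounded parameter exponent is exhausted, one obtains the pointwise bound $|D^{\beta}a|^{2}\le C(1+|b_{0}(x)|^{2}+|\sigma_{0}(x)|^{4})\rho^{2(m-\beta_{\lambda})_{+}}$ with $D^{\beta}a=0$ for $|\beta|$ beyond a finite threshold.

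Squaring the Leibniz sum, multiplying by $w^{2}\rho^{2|\alpha|}$ and integrating, each contribution reduces to $\int w^{2}\rho^{2(|\alpha|+(m-\beta_{\lambda})_{+})}|D^{|\alpha-\beta|+m}f|^{2}\,d\hat{\pp}$, to be compared with the piece of $\|f\|_{J+2}^{2}$ at derivative order $|\alpha-\beta|+m\le J+2$, which carries weight $\rho^{2(|\alpha-\beta|+m)}$. The discrepancy $2(|\beta|-\beta_{\lambda})$ in the powers of $\rho$ vanishes precisely when $\beta$ is a pure $\lambda$-multi-index; for parameter-direction components of $\beta$, I would redistribute the extra $\rho$-factors via integration by parts in the relevant parameter direction, controlling the resulting boundary and bulk terms by means of Condition \ref{A:AssumptionsOnWeights} (boundedness of $\rho^{k-1}D^{k}\rho$ and $w^{-1}\rho^{k}D^{k}w$). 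Summing the finitely many $\alpha$ and $\beta$ yields the desired estimate for each local piece, with the prefactor appropriate to the $x$-dependence of $a$.

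For the two nonlocal pieces, $\la \QQ,\mu\ra_{E}\,\genL_{2}f=\beta^{C}\la \QQ,\mu\ra_{E}\,\partial_{\lambda}f$ is handled directly, yielding $\la \QQ,\mu\ra_{E}^{2}\|\beta^{C}\partial_{\lambda}f\|_{J}^{2}\le CK^{2}\la \QQ,\mu\ra_{E}^{2}\|f\|_{J+2}^{2}$ from the constant-coefficient case of the local-piece estimate. For $\la \genL_{2}f,\mu\ra_{E}\,\QQ$, the scalar is controlled via Cauchy--Schwarz and the hypothesis $\int(w\rho)^{-2}\,d\mu<\infty$, giving $|\la \genL_{2}f,\mu\ra_{E}|^{2}\le CK^{2}\int w^{2}\rho^{2}|\partial_{\lambda}f|^{2}\,d\hat{\pp}\le C\|f\|_{J+2}^{2}$, while the multiplier $\QQ(\hat{\pp})=\lambda$ has $\|\QQ\|_{W_{0}^{J}(w,\rho)}<\infty$ by direct computation (only two derivative orders are nonzero) using the decay of $w$ permitted by Condition \ref{A:AssumptionsOnWeights}. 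The main technical obstacle is the Leibniz-expansion bookkeeping described above: one must balance the $\rho$-weights between the coefficient's polynomial growth in $\lambda$ and the available weight in the Sobolev norm at each derivative order, and this is exactly what the weight axioms of Condition \ref{A:AssumptionsOnWeights} are designed to secure.
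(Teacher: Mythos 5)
Your proposal follows the same four-term decomposition of $\mathcal{G}_{x,\mu}f$ that the paper uses, controlling the differential pieces $\genL_1 f$, $\genL^{x}_3 f$, $\la \QQ,\mu\ra_E\genL_2 f$ via Condition \ref{A:AssumptionsOnWeights} and handling the nonlocal piece $\la\genL_2 f,\mu\ra_E\QQ$ by exactly the same Cauchy--Schwarz argument that exploits the hypothesis $\int_{\hat\PP}(w^2\rho^2)^{-1}\,\mu(d\hat\pp)<\infty$. Your Leibniz-expansion bookkeeping (and the proposed integration-by-parts step to rebalance $\rho$-weights in parameter directions) fills in detail the paper simply subsumes under an appeal to Condition \ref{A:AssumptionsOnWeights}, so the underlying structure and the one genuinely nonroutine step are identical.
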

\begin{proof}
Recall the definition of $\mathcal{G}_{x,\mu}f$ and examine term by term. For the first term, Condition \ref{A:AssumptionsOnWeights} gives us
\begin{eqnarray}
\left\Vert \genL_{1}f \right\Vert^{2}_{J}&=& \sum_{k=0}^{J}\int_{\hat{\PP}}w^{2}(\hat{\pp})\rho^{2k}(\hat{\pp})\left(D^{k}\genL_{1}f(\hat{\pp})\right)^{2}d\hat{\pp}\nonumber\\
&\leq& C \sum_{k=0}^{J+2}\int_{\hat{\PP}}w^{2}(\hat{\pp})\rho^{2k}(\hat{\pp})\left(D^{k}f(\hat{\pp})\right)^{2}d\hat{\pp}\nonumber\\
&\leq& C \left\Vert f\right\Vert^{2}_{J+2} \nonumber
\end{eqnarray}
For the second term again Condition \ref{A:AssumptionsOnWeights} gives us
\begin{eqnarray}
\left\Vert \genL_{3}^{x}f \right\Vert^{2}_{J}&=& \sum_{k=0}^{J}\int_{\hat{\PP}}w^{2}(\hat{\pp})\rho^{2k}(\hat{\pp})\left(D^{k}\genL_{3}^{x}f(\hat{\pp})\right)^{2}d\hat{\pp}\nonumber\\
&\leq&C \left(|b_{0}(x)|^{2}+|\sigma_{0}(x)|^{4}\right) \sum_{k=0}^{J+2}\int_{\hat{\PP}} w^{2}(\hat{\pp})\rho^{2k}(\hat{\pp})\left(D^{k}f(\hat{\pp})\right)^{2}d\hat{\pp}\nonumber\\
&\leq& C\left(|b_{0}(x)|^{2}+|\sigma_{0}(x)|^{4}\right)    \left\Vert f\right\Vert^{2}_{J+2} \nonumber
\end{eqnarray}
For the third term we have
\begin{eqnarray}
\left\Vert \la \QQ,\mu \ra\genL_{2}f \right\Vert^{2}_{J}&=& \la \QQ,\mu \ra^{2}\sum_{k=0}^{J}\int_{\hat{\PP}}w^{2}(\hat{\pp})\rho^{2k}(\hat{\pp})\left(D^{k}\genL_{2}f(\hat{\pp})\right)^{2}d\hat{\pp}\nonumber\\
&\leq&C \la \QQ,\mu \ra^{2}    \left\Vert f\right\Vert^{2}_{J+2} \nonumber
\end{eqnarray}
For the fourth term we have
\begin{eqnarray}
\left\Vert \la \genL_{2}f,\mu \ra \QQ \right\Vert^{2}_{J}&\leq& C \la \genL_{2}f,\mu \ra^{2}\sum_{k=0}^{J}\int_{\hat{\PP}} w^{2}(\hat{\pp})\rho^{2k}(\hat{\pp}) (1+\lambda^{2})d\hat{\pp}\nonumber\\
&\leq&C \left(\int_{\hat{\PP}}\left|D f(\hat{\pp})\right|\mu(d\hat{\pp})\right)^{2}\sum_{k=0}^{J}\int_{\hat{\PP}}w^{2}(\hat{\pp})\rho^{2k}(\hat{\pp})(1+|\hat{\pp}|^{2})d\hat{\pp}\nonumber\\
&\leq&\int_{\hat{\PP}}w^{2}(\hat{\pp})\rho^{2}(\hat{\pp})|D f(\hat{\pp})|^{2}d\hat{\pp} \int_{\hat{\PP}}\left(w^{2}(\hat{\pp})\rho^{2}(\hat{\pp})\right)^{-1}\mu(d\hat{\pp})\sum_{k=0}^{J}\int_{\hat{\PP}}w^{2}(\hat{\pp})\rho^{2k}(\hat{\pp})(1+|\hat{\pp}|^{2})d\hat{\pp}\nonumber\\
&\leq&C   \left\Vert f\right\Vert^{2}_{J} \nonumber\\
&\leq&C   \left\Vert f\right\Vert^{2}_{J+2}\label{Eq:BoundForLemmaDomainOfDefinitionOfG}
\end{eqnarray}
The statement for $\genL^{x}_{4}$ follows analogously. This completes the proof of the lemma.
\end{proof}

Notice that if the weights $w,\rho$ are chosen as above, then the condition on $\mu$ of Lemma \ref{L:DomainOfDefinitionOfG} is equivalent to assuming that $\mu\in E$ has finite moments up to order $2|\beta|-1$.

\section{Tightness and continuity properties of the limiting process}\label{S:Tightness}

We first recall three key preliminary results from \citeasnoun{GieseckeSpiliopoulosSowers2011} and \citeasnoun{GieseckeSpiliopoulosSowersSirigano2012} that will be useful in the sequel.

\begin{lemma}\label{L:macrobound}[Lemma 3.4 in \citeasnoun{GieseckeSpiliopoulosSowers2011}.] For each $p\ge 1$ and $T\ge 0$, there is a constant $C>0$, independent of $\NN$ such that
\begin{equation*} \sup_{\substack{0\le t\le T \\ N\in \N}}\frac{1}{N}\sum_{n=1}^N\BE\left[|\lambda^\NN_t|^p\right]\leq C. \end{equation*}
\end{lemma}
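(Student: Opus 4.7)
The plan is to apply It\^o's formula to the function $\lambda \mapsto \lambda^p$ at each semimartingale $\lambda^\NN$, localize so that local martingales are true martingales, then average over $n$ and close via a Gronwall-type argument conditional on $V$. Uniformity in $(N,n)$ will come from four ingredients: (i) the parameters $\alpha_\NN,\bar\lambda_\NN,\sigma_\NN,\beta^C_\NN,\beta^S_\NN$ are uniformly bounded by $K$ (Condition \ref{A:regularity}); (ii) the initial intensities $\lambda_{\circ,N,n}$ are uniformly bounded since $\nu$ has compact support; (iii) the contagion driver $L^N$ satisfies $L^N_t\le 1$ almost surely; and (iv) the $X$-driven coefficient is common to all names.

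First I would introduce the stopping times $\tau^\NN_M=\inf\{t:\lambda^\NN_t\ge M\}$ and apply It\^o's formula for jump semimartingales to $(\lambda^\NN_{t\wedge\tau^\NN_M})^p$. The continuous drift contributes a CIR-type piece $p(\lambda^\NN)^{p-1}[-\alpha_\NN(\lambda^\NN-\bar\lambda_\NN)]+\tfrac{p(p-1)}{2}\sigma_\NN^2(\lambda^\NN)^{p-1}$ with bounded coefficients, an $X$-driven piece $[p\beta^S_\NN b_0(X_t)+\tfrac{p(p-1)}{2}(\beta^S_\NN)^2\sigma_0^2(X_t)](\lambda^\NN)^p$, and martingale terms from $dW^n$ and $dV$. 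The contagion produces jumps of magnitude $(\lambda^\NN_{s-}+\beta^C_\NN/N)^p-(\lambda^\NN_{s-})^p\le C(K,p)(\lambda^\NN_{s-})^{p-1}/N+O(1/N^p)$. After compensating $dL^N$ by its intensity $\frac{1}{N}\sum_m\lambda^{N,m}\dfi^{N,m}$, the compensator part of the jump term produces a drift of size $O(1/N)\cdot(\lambda^\NN)^{p-1}\cdot \frac{1}{N}\sum_m\lambda^{N,m}$, and the compensated jump martingale vanishes in expectation under the localization.

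Next I would take expectation, use Young's inequality $a^{p-1}b\le \tfrac{p-1}{p}a^p+\tfrac{1}{p}b^p$ to collapse all mixed powers $(\lambda^\NN)^{p-1}\lambda^{N,m}$, $(\lambda^\NN)^{p-1}\bar\lambda_\NN$, $(\lambda^\NN)^{p-1}$ into pure $p$-th powers plus constants, and then average over $n$. Writing $\bar f^M(t):=\BE\bigl[\tfrac{1}{N}\sum_{n=1}^N(\lambda^\NN_{t\wedge\tau^\NN_M})^p\,\big|\,\mathcal{V}_T\bigr]$, the bookkeeping yields the pathwise linear inequality
\begin{equation*}
\bar f^M(t)\le C_0+C\int_0^t \bar f^M(s)\,\gamma(s)\,ds,\qquad \gamma(s)=1+|b_0(X_s)|+\sigma_0^2(X_s),
\end{equation*}
where $C_0$ is controlled by the uniform bound on the initial $\lambda_{\circ,N,n}$ and $C$ depends only on $K,p$. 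The contagion coupling is rendered harmless by the bound $L^N\le 1$ and by the averaging over $n$, which turns cross-name $p$-th-power terms into $\bar f^M$ itself. Gronwall then gives $\bar f^M(t)\le C_0\exp\bigl(C\int_0^T\gamma(s)\,ds\bigr)$, and taking unconditional expectation and sending $M\to\infty$ by monotone convergence yields the uniform bound, provided $\BE\exp\bigl(C\int_0^T\gamma(s)\,ds\bigr)<\infty$; this integrability is available from Condition \ref{A:integrability} together with the standing regularity on $b_0,\sigma_0$ assumed in \citeasnoun{GieseckeSpiliopoulosSowers2011}.

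The main obstacle is precisely the multiplicative coupling of $(\lambda^\NN)^p$ with the unbounded $X$-driven coefficients $b_0(X)$ and $\sigma_0^2(X)$: without this conditioning step one cannot close a Gronwall inequality by Cauchy--Schwarz in $s$ alone, since bounding $\BE[(\lambda^\NN)^p\psi(X)]$ by $\BE[(\lambda^\NN)^{2p}]^{1/2}\BE[\psi(X)^2]^{1/2}$ only pushes the problem to a higher moment. Conditioning on $\mathcal{V}_T$ converts the $X$-dependent coefficient into a pathwise deterministic weight, so Gronwall applies pathwise and the exponential integrability of $\int_0^T\gamma$ under the law of $X$ absorbs the remaining randomness.
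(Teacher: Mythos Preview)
The paper does not prove this lemma; it is quoted verbatim from Lemma~3.4 of \citeasnoun{GieseckeSpiliopoulosSowers2011} and used as a black box. Your It\^o--plus--Gronwall skeleton is the standard route and is almost certainly what the original reference does, so in spirit you are aligned. Two points, however, deserve scrutiny.

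First, conditioning on $\mathcal{V}_T$ does \emph{not} annihilate the $dV$ stochastic integral. The It\^o expansion of $\tfrac1N\sum_n(\lambda^{\NN})^p$ contains the term $\int_0^t p\,\beta^S_{\NN}\sigma_0(X_s)\,\tfrac1N\sum_n(\lambda^{\NN}_s)^p\,dV_s$. Taking $\BE[\,\cdot\,|\mathcal{V}_T]$ and using independence of the $W^n$'s from $V$ converts this into $\int_0^t p\,\beta^S\sigma_0(X_s)\,\bar f^M(s)\,dV_s$, a genuine $\mathcal{V}$-adapted It\^o integral with strictly positive integrand, not zero. Hence your ``pathwise linear inequality'' $\bar f^M(t)\le C_0+C\int_0^t\bar f^M(s)\gamma(s)\,ds$ is missing a stochastic term, and Gronwall as stated does not close. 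A repair is to regard the relation for $\bar f^M$ as a linear It\^o inequality in the $\mathcal{V}$-filtration and bound it by a Dol\'eans--Dade exponential; this works, but the final integrability requirement becomes an exponential moment of $\int_0^T\bigl(|b_0(X_s)|+\sigma_0^2(X_s)\bigr)\,ds$.

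Second, even granting your inequality, Condition~\ref{A:integrability} provides only second and fourth moments of $b_0(X)$ and $\sigma_0(X)$, not the exponential moment you invoke. Your deferral to ``standing regularity assumed in \citeasnoun{GieseckeSpiliopoulosSowers2011}'' is therefore doing essential work: under stronger hypotheses on $X$ (for instance bounded $\sigma_0$ and affine $b_0$, as in the OU examples used in Section~\ref{S:NumericalMethod}) both obstacles dissolve, and in fact the cruder route---take \emph{unconditional} expectation so that both $dW^n$ and $dV$ local martingales vanish, then close Gronwall with bounded or $X$-independent coefficients---already suffices without any conditioning.
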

\begin{lemma}\label{L:bQDef}[Lemma 8.2 in \citeasnoun{GieseckeSpiliopoulosSowersSirigano2012}.] Let $W^*$ be a reference Brownian motion.  For each $\hat \pp=(\pp,\lambda_\circ)\in
\hat \PP$ where $\pp = (\alpha,\bar
\lambda,\sigma,\beta^C,\beta^S)$, there is a unique pair
$\{(Q(t),\lambda_{t}^{*}(\hat \pp)):t\in[0,T]\}$ taking values in
$\R_+\times\R_{+}$ such that
\begin{equation}\label{E:bQDef} \begin{aligned} Q(t) &= \int_{\hat \PP} \BE_{(\mathcal{V},\hat{\pp})}\left\{\lambda^{*}_{t}(\hat\pp)\exp\left[-\int_{0}^t \lambda_s^*(\hat \pp)ds\right]
\right\}\nu(d\hat{\pp}).
\end{aligned}\end{equation} and
\begin{equation}
\lambda^*_t(\hat \pp) = \lambda_\circ - \alpha\int_{0}^t (\lambda^*_s(\hat \pp)-\bar \lambda)ds + \sigma\int_{0}^t\sqrt{\lambda^*_s(\hat \pp)}dW^*_s + \beta^{C}\int_{0}^t Q(s) ds+\beta^{S}\int_{0}^t \lambda^*_s(\hat \pp)dX_{s}. \label{E:EffectiveEquation1}
\end{equation}
\end{lemma}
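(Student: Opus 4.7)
The plan is to solve the coupled system by a Picard/fixed-point argument: treat $Q$ as the unknown in a space of continuous nonnegative functions on $[0,T]$, and for each candidate $Q$ construct the associated $\lambda^*(\hat p)$ through the SDE, then define a map $\Phi$ via the right-hand side of (\ref{E:bQDef}) and show it has a unique fixed point.

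First, for a fixed continuous $Q:[0,T]\to\R_+$ and fixed $\hat p=(\pp,\lambda_\circ)\in\hat\PP$, I would establish existence and uniqueness of a nonnegative strong solution $\lambda^*(\hat p)$ to
\begin{equation*}
d\lambda^*_t = -\alpha(\lambda^*_t-\bar\lambda)\,dt + \sigma\sqrt{\lambda^*_t}\,dW^*_t + \beta^C Q(t)\,dt + \beta^S \lambda^*_t\,dX_t.
\end{equation*}
Because $x\mapsto\sqrt{x}$ is only H\"older-$1/2$, standard Lipschitz theory does not apply, but a Yamada--Watanabe-type argument (as used in Section 3 of \citeasnoun{GieseckeSpiliopoulosSowers2011} for the system (\ref{E:main})) gives pathwise uniqueness, and weak existence with nonnegativity follows from a Feller-like construction; together these yield a unique strong nonnegative solution. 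Moreover, standard moment estimates together with Condition \ref{A:integrability} give $\sup_{t\le T}\BE[|\lambda^*_t(\hat p)|^p]\le C(1+|\lambda_\circ|^p)$ uniformly in $\hat p$ over the compact support of $\nu$. In particular the integrand in (\ref{E:bQDef}) is bounded by $\lambda^*_t(\hat p)$, hence integrable against $\nu$, so $\Phi[Q](t)$ is well defined and continuous in $t$, with values in $\R_+$.

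Next I would set up the contraction. Let $\mathcal{Q}_T$ be the space of continuous $Q:[0,T]\to\R_+$ with the weighted norm $\|Q\|_\gamma=\sup_{t\le T}e^{-\gamma t}|Q(t)|$ for a large $\gamma>0$ to be chosen. Given $Q_1,Q_2\in\mathcal{Q}_T$, let $\lambda^{*,i}(\hat p)$ denote the corresponding solutions. Subtracting the two SDEs and applying It\^o to $|\lambda^{*,1}_t-\lambda^{*,2}_t|^2$ (or using the Yamada--Watanabe approximation for the $\sqrt{\lambda}$ term to handle the non-Lipschitz diffusion coefficient), then Gronwall in expectation, one obtains an estimate of the form
\begin{equation*}
\BE|\lambda^{*,1}_t(\hat p)-\lambda^{*,2}_t(\hat p)| \le C\int_0^t |Q_1(s)-Q_2(s)|\,ds,
\end{equation*}
uniformly in $\hat p$ over $\operatorname{supp}\nu$, where $C$ depends on $K$, $T$, and the bounds from Condition \ref{A:integrability}. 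Since $z\mapsto z e^{-\int_0^t\cdot}$ on bounded sets in expectation is Lipschitz after using the elementary inequality $|ae^{-A}-be^{-B}|\le|a-b|+a|A-B|$ together with the moment bounds, this propagates to
\begin{equation*}
|\Phi[Q_1](t)-\Phi[Q_2](t)| \le C'\int_0^t |Q_1(s)-Q_2(s)|\,ds.
\end{equation*}
Choosing $\gamma$ sufficiently large yields $\|\Phi[Q_1]-\Phi[Q_2]\|_\gamma \le \tfrac12\|Q_1-Q_2\|_\gamma$, and the Banach fixed point theorem delivers a unique $Q\in\mathcal{Q}_T$ with $\Phi[Q]=Q$. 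The associated $\lambda^*(\hat p)$ from the first step then completes the required pair, and pathwise uniqueness for the SDE ensures uniqueness of the pair.

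The main obstacle is the combination of the non-Lipschitz $\sqrt{\lambda}$ diffusion with the multiplicative systematic-risk term $\beta^S\lambda\,dX$ when producing the stability estimate $\BE|\lambda^{*,1}_t-\lambda^{*,2}_t|\lesssim\int_0^t|Q_1-Q_2|\,ds$; the $\sqrt{\lambda}$ term must be handled via Yamada--Watanabe smoothing rather than straight Lipschitz bounds, while uniform-in-$\hat p$ control requires that the estimates depend only on $K$ (from Condition \ref{A:regularity}) and on the $X$-moments from Condition \ref{A:integrability}. Once this estimate is in hand, the exponential-weighted contraction argument is routine.
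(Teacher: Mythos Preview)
The paper does not supply its own proof of this lemma; it simply quotes the result from \citeasnoun{GieseckeSpiliopoulosSowersSirigano2012} (their Lemma~8.2) and uses it as an input to the tightness argument. So there is no proof here against which to compare, and your Picard/fixed-point scheme is a reasonable reconstruction of how such a coupled McKean--Vlasov system is typically handled.

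One point to tighten: you frame $Q$ as an element of the space of continuous deterministic functions $[0,T]\to\R_+$, but the defining expectation $\BE_{(\mathcal{V},\hat\pp)}[\cdot]$ is \emph{conditional} on the $V$-path, so $Q(t)$ is $\mathcal{V}_t$-measurable rather than deterministic. The contraction should therefore be run either in a space of continuous $\mathcal{V}$-adapted processes (with the weighted sup norm taken in $L^1$ or $L^2$ over the $V$-randomness), or else pathwise after conditioning on $V$ --- in which case the $\beta^S\lambda^*\,dX$ term becomes a Young/Wiener integral against a fixed rough path and you need to argue measurability of the resulting fixed point in $V$. Either route works, but the write-up as it stands elides this distinction. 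The remaining ingredients --- Yamada--Watanabe for the $\sqrt{\lambda}$ coefficient, moment bounds uniform over $\operatorname{supp}\nu$ via Condition~\ref{A:regularity}, and the exponentially weighted norm to force contraction --- are the right ones.
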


\begin{lemma}\label{L:Qchar}[Lemma 8.4 in \citeasnoun{GieseckeSpiliopoulosSowersSirigano2012}.] For all $A\in \Borel(\PP)$ and $B\in \Borel(\R_+)$, $\bar \mu$ satisfying (\ref{Eq:LimitMu}) is given by
\begin{equation*}\label{E:mudef} \bar \mu_t(A\times B) =  \int_{\hat \PP} \chi_A(\pp) \BE_{(\mathcal{V}_{t},\hat{\pp})}\left[\chi_B(\lambda^*_t(\hat \pp))\exp\left[-\int_{0}^t \lambda_s^*(\hat \pp)ds\right]\right]
\nu(d\hat{\pp}),
\end{equation*}
where $\lambda^*_t(\hat \pp)$ is defined via (\ref{E:bQDef})-(\ref{E:EffectiveEquation1}). Therefore, for any $f\in C(\hat \PP)$,
\begin{equation*} \la f,\bar \mu_t\ra_E = \int_{\hat \PP} \BE_{(\mathcal{V}_{t},\hat{\pp})}\left[f(\pp,\lambda_t^*(\hat \pp))\exp\left[-\int_{0}^t \lambda_s^*(\hat \pp)ds\right]\right] \nu(d\hat{\pp}). \end{equation*}
\end{lemma}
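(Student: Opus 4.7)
The plan is to show the identity by verifying that the right-hand side, regarded as a sub-probability-measure-valued process, solves the measure evolution equation (\ref{Eq:LimitMu}), and then invoking uniqueness of that equation. Concretely, for $f \in C_b(\hat\PP)$ define
\begin{equation*}
\langle f, \tilde\mu_t \rangle_E \Def \int_{\hat\PP} \BE_{(\mathcal{V}_t, \hat\pp)}\left[f(\pp, \lambda^*_t(\hat\pp))\exp\left(-\int_0^t \lambda^*_s(\hat\pp)\, ds\right)\right] \nu(d\hat\pp).
\end{equation*}
Taking $f \equiv 1$ shows $\tilde\mu_t$ is a sub-probability measure on $\hat\PP$; with $f(\hat\pp)=\lambda$, Lemma \ref{L:bQDef} gives $\langle \QQ, \tilde\mu_s\rangle_E = Q(s)$. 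The goal is to show $\tilde\mu_t = \bar\mu_t$ almost surely.

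The main computation is Itô's formula applied to $Y_t \Def f(\pp, \lambda^*_t(\hat\pp)) \exp(-\int_0^t \lambda^*_s(\hat\pp)\, ds)$ for $f \in C^2_b(\hat\PP)$, using (\ref{E:EffectiveEquation1}) and $dX_t = b_0(X_t) dt + \sigma_0(X_t) dV_t$. The drift regroups exactly into $\genL_1 f + Q(s)\genL_2 f + \genL_3^{X_s} f$ evaluated at $(\pp,\lambda^*_s)$ (the killing contribution $-\lambda f$ inside $\genL_1$ is produced by differentiating the exponential), while the martingale part splits into a $W^*$-integral and a $V$-integral with integrand $e^{-\int_0^s \lambda^*_r dr}(\genL_4^{X_s} f)(\pp,\lambda^*_s)$. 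Conditioning on $\mathcal{V}_t$ together with $\hat\pp$ kills the $W^*$-stochastic integral and leaves the $V$-integral intact. Integrating against $\nu(d\hat\pp)$ and invoking stochastic Fubini to interchange the $\nu$-integration with both the $ds$-integral and the $dV_s$-integral, one recognizes the result as exactly the measure evolution equation (\ref{Eq:LimitMu}) with $\tilde\mu$ in place of $\bar\mu$, using the identification $\langle \QQ,\tilde\mu_s\rangle_E = Q(s)$.

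With $\tilde\mu$ exhibited as a solution of (\ref{Eq:LimitMu}) tested against every $f \in C^2_b(\hat\PP)$, uniqueness for this equation (established en route to the law of large numbers in \citeasnoun{GieseckeSpiliopoulosSowers2011} and \citeasnoun{GieseckeSpiliopoulosSowersSirigano2012}) forces $\tilde\mu_t = \bar\mu_t$ almost surely for each $t \in [0,T]$. The displayed identity for $\bar\mu_t(A \times B)$ then follows by specializing to $f(\pp,\lambda) = \chi_A(\pp)\chi_B(\lambda)$ and approximating by smooth compactly supported test functions via a standard monotone-class argument (both sides being finite measures on $\hat\PP$).

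The main technical obstacle is the stochastic Fubini interchange between $\nu(d\hat\pp)$ and $dV_s$, together with the verification that the $W^*$-martingale part of $Y_t$ is a true martingale uniformly in $\hat\pp$ over $\mathrm{supp}(\nu)$. Both reduce to uniform-in-$\hat\pp$ moment bounds on $\lambda^*_s(\hat\pp)$, which follow from Condition \ref{A:regularity} (compact support of $\nu$ and boundedness of the parameter types), Condition \ref{A:integrability} (integrability of the coefficients of $X$), and a Gronwall argument applied to (\ref{E:EffectiveEquation1}) paralleling Lemma \ref{L:macrobound} for the prelimit.
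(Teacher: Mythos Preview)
The paper does not supply its own proof of this lemma: it is quoted as Lemma 8.4 of \citeasnoun{GieseckeSpiliopoulosSowersSirigano2012} and used as a black box. So there is nothing in the present paper to compare your argument against directly; your proposal is in effect a reconstruction of the proof in the cited reference.

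Your strategy---define $\tilde\mu_t$ by the right-hand side, apply It\^o's formula to $f(\pp,\lambda^*_t)\exp(-\int_0^t\lambda^*_s\,ds)$, condition on $\mathcal{V}_t$ and $\hat\pp$, integrate against $\nu$, and identify the result with (\ref{Eq:LimitMu}) via $\langle\QQ,\tilde\mu_s\rangle_E=Q(s)$, then invoke uniqueness---is the natural one and is essentially correct. One point deserves more care than you give it. You write that conditioning on $\mathcal{V}_t$ ``leaves the $V$-integral intact,'' but the integrand of the $dV_s$-term depends on $\lambda^*_s$, which is not $\mathcal{V}_s$-measurable. What you actually need is the identity
\[
\BE\!\left[\int_0^t H_s\,dV_s\;\Big|\;\mathcal{V}_t\right]=\int_0^t \BE[H_s\mid\mathcal{V}_s]\,dV_s,
\]
valid here because $W^*$ and $V$ are independent (so $V$ remains a Brownian motion in the enlarged filtration, and the conditional expectation given $\mathcal{V}_t$ of the $W^*$-driven part of $H_s$ can be pulled through). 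This is a filtering-type projection rather than the $\nu$-versus-$dV$ stochastic Fubini you flag; it is standard but worth stating explicitly. Once this is in place, the rest of your outline---moment bounds from Conditions \ref{A:regularity}--\ref{A:integrability} ensuring the $W^*$-integral is a true martingale and justifying Fubini over $\nu$---is routine, and the appeal to uniqueness of (\ref{Eq:LimitMu}) closes the argument.
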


Let us now set
\begin{equation*} G(t)\Def \int_{\hat \PP}  \left(1-\BE_{(\mathcal{V},\hat{\pp})}\left[\exp\left[-\int_{0}^t \lambda_s^*(\hat \pp)ds\right]\right]\right) \nu(d\hat{\pp}).\label{Eq:DefinitionOfG}
\end{equation*}
Notice that
\begin{equation*}\label{E:gg} \dot G(t) = Q(t). \end{equation*}

We prove tightness based on a coupling argument. The role of the coupled intensities will be played by $\lambda_{t}^{\NN, G}$, defined as follows.
Define $\lambda_{t}^{\NN, G}$ to be the solution to (\ref{E:main}) when the loss process term $\beta^{C}_{\NN} L^{N}_{t}$ in the equation for the intensities has
been replaced by $G(t)$. Let us also denote by $\tau^{\NN,G}$ the corresponding default time and by $L^{N,G}_{t}$ the corresponding loss process. It is easy to see that conditional on the systematic process $X$, the $\lambda^{\NN,G}$ for $n=1,\ldots,N$ are independent. The related empirical distribution will be accordingly denoted by $\mu^{N,G}_{t}$.

Let $\kappa>0$ and define the stopping time
\begin{equation*}
\theta_{N,\kappa}^{p}=\inf \left\{ t: \frac{1}{N}\sum_{n=1}^{N}\left[|\lambda^{\NN}_{t}|^{p}+|\lambda^{\NN,G}_{t}|^{p}\right]\geq \kappa^{p} \right\}. \label{Eq:StoppingTime}
\end{equation*}
Notice that the estimate in Lemma \ref{L:macrobound} implies that for $T>0$ and $p\geq 1$
\begin{equation}
 \lim_{\kappa\rightarrow\infty}\sup_{N\in\N}\BP(\theta_{N,\kappa}^{p}\leq T)=0\label{Eq:BoundedStoppingTime}
\end{equation}
Thus, it is enough to prove tightness for $\{\Xi^{\NN}_{t\wedge \theta_{N,\kappa}}, t\in[0,T]\}_{N\in \N}$.

The following lemma provides a key estimate for the tightness proof.
\begin{lemma}\label{L:KeyEstimate}
For $J$ large enough (in particular for $J>D+1$) and weights $(w,\rho)$ such that Condition \ref{A:AssumptionsOnWeights} holds  and for every $T>0$, there is a constant $C$ independent of $N$ such that
\begin{equation*}
\sup_{N\in\N}\sup_{0\le t\le T}\BE\left(N\left\Vert \mu^N_{t\wedge \theta_{N,\kappa}}-\mu^{N,G}_{t\wedge \theta_{N,\kappa}}\right\Vert^{2}_{W^{-J}_{0}(w,\rho)}
\right)\leq C.
\end{equation*}
\end{lemma}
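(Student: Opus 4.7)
The plan is to derive a Gronwall inequality for $\BE\|\mu^N_{t\wedge\theta_{N,\kappa}} - \mu^{N,G}_{t\wedge\theta_{N,\kappa}}\|^2_{W_0^{-J}(w,\rho)}$ by expanding in an orthonormal basis of $W_0^J(w,\rho)$ and applying It\^o to each basis coefficient. First, by an It\^o computation analogous to the one leading to (\ref{Eq:PrelimitMu}), I write the weak-form evolution of $\la f,\mu^N_t - \mu^{N,G}_t\ra$ for $f\in C_b^2(\hat\PP)$. Subtracting the two evolutions, one obtains a drift of the form $\int_0^t \la \mathcal{G}_{X_s,\mu^N_s}f,\,\mu^N_s - \mu^{N,G}_s\ra\,ds$ plus a remainder capturing the discrepancy between $\beta^C\,dL^N_s$ and $\beta^C\,dG(s)$, a Brownian integral $\int_0^t \la \genL_4^{X_s} f,\,\mu^N_s - \mu^{N,G}_s\ra\,dV_s$, and a martingale $\mathscr{M}^N_t(f)$ built from the idiosyncratic drivers $W^n$ and the compensated default indicators. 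Because both systems are driven by the \emph{same} $W^n$, $V$ and $\ee_n$, the individual differences $\lambda^{\NN}_s - \lambda^{\NN,G}_s$ carry no fresh diffusive noise, so $\mathscr{M}^N$ has quadratic variation of order $1/N$.

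Second, fix an orthonormal basis $\{\phi_k\}_{k\ge 1}$ of $W_0^J(w,\rho)$; by Parseval,
$$\|\mu^N_t - \mu^{N,G}_t\|^2_{-J} = \sum_{k\ge 1}|\la \phi_k,\,\mu^N_t - \mu^{N,G}_t\ra|^2.$$
Applying It\^o to each summand, taking expectation, and summing over $k$, the martingale quadratic-variation contributions collapse into kernel sums of the form $\sum_k \lambda\bigl(\partial_\lambda\phi_k(\hat\pp)\bigr)^2$ and $\sum_k \lambda\,\phi_k(\hat\pp)^2$. The hypothesis $J>D+1=4$ (recall $\dim\hat\PP = 2D = 6$) is precisely what makes the canonical embedding of $W_0^J(w,\rho)$ into the weighted $L^2$-space where these sums live a Hilbert--Schmidt embedding, so these kernel sums are pointwise bounded by $C(1+|\hat\pp|^2)w^{-2}(\hat\pp)$. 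Combined with Lemma \ref{L:macrobound} and the definition of $\theta_{N,\kappa}$, the martingale contribution to $\BE\|\mu^N_{t\wedge\theta_{N,\kappa}} - \mu^{N,G}_{t\wedge\theta_{N,\kappa}}\|^2_{-J}$ is of order $1/N$. The drift terms, via the continuity estimate of Lemma \ref{L:DomainOfDefinitionOfG} (applied with $J+2$ in place of $J$, which one accommodates by an appropriate relabeling in the choice of working space) and Condition \ref{A:integrability}, are dominated by $C\int_0^t\BE\|\mu^N_{s\wedge\theta_{N,\kappa}} - \mu^{N,G}_{s\wedge\theta_{N,\kappa}}\|^2_{-J}\,ds$, plus a further $O(1/N)$ remainder arising from the coupling error $L^N - G$; the latter is controlled by expressing $L^N - L^{N,G} = \la 1,\mu^{N,G} - \mu^N\ra$ (noting that $1\in W_0^J(w,\rho)$ under the admissible weights) and invoking the conditional-on-$V$ independence of the $\lambda^{\NN,G}$ to handle $L^{N,G} - G$ via a standard one-particle LLN at rate $1/\sqrt{N}$.

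Multiplying by $N$ and applying Gronwall then yields the claimed uniform bound. The main obstacle is the Hilbert--Schmidt verification: under Condition \ref{A:AssumptionsOnWeights} and for $J>D+1$, one must show that the weighted kernel sums $\sum_k |\phi_k(\hat\pp)|^2$ and $\sum_k |\partial_\lambda\phi_k(\hat\pp)|^2$ are locally bounded by the reciprocal of the weight $w^2$, uniformly in the choice of ONB. This is a delicate tradeoff between the derivative order $J$, the ambient dimension $2D=6$, and the growth and degeneracy of the coefficients in (\ref{E:main}), and is precisely why weighted Sobolev spaces of Purtukhia--Gy\"ongy--Krylov type are needed. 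A secondary technical difficulty is that $\mathcal{G}_{X_s,\mu^N_s}$ raises the Sobolev order from $J$ to $J+2$ while $\genL_4^{X_s}$ raises it to $J+1$, so one must either work with the ONB in $W_0^{J+2}$ or absorb the extra derivatives by a careful interpolation; this is the one place where a larger $J$ in the full Theorem \ref{T:MainCLT} (namely $J>3D+1$) becomes necessary, even though the present lemma only requires $J>D+1$.
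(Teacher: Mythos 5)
Your plan is essentially the same as the paper's: couple with $\mu^{N,G}$, expand $\eta^N_t = \mu^N_t-\mu^{N,G}_t$ via It\^o in each ONB coefficient, charge the martingale quadratic variation at $O(1/N)$, invoke the Hilbert--Schmidt embedding $W^J_0\hookrightarrow W^1_0$ (valid for $J>D+1$) to sum over the ONB, and close with Gronwall after controlling the coupling error using the conditional (on $\mathcal V$) independence of the $\lambda^{N,n,G}$.

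The one place where you diverge in detail is the treatment of the coupling error. The paper avoids ever introducing a term like $L^N-L^{N,G}$: by writing the drift difference with the two-measure operator $\mathcal G_{X_s,\mu^{N,G}_s,\mu^N_s}$ (with $\la\QQ,\cdot\ra$ paired against $\mu^{N,G}_s$ and $\la\genL_2 f,\cdot\ra$ paired against $\mu^N_s$), the telescoping leaves only the single boundary term $\la\genL_2 f,\mu^{N,G}_s\ra\la\QQ,\mu^{N,G}_s-\bar\mu_s\ra$, after which Young's inequality plus the observation that $N\la\QQ,\mu^{N,G}_s-\bar\mu_s\ra$ is a conditional discrete-time martingale (Burkholder, Lemma~\ref{L:macrobound}) gives the $O(1/N)$ bound. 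Your split $L^N-G=(L^N-L^{N,G})+(L^{N,G}-G)$ together with $L^N-L^{N,G}=\la 1,\mu^{N,G}-\mu^N\ra$ is a workable but heavier version of the same idea; note also that the quantity that actually appears after It\^o is the rate difference $\la\QQ,\mu^N\ra-\la\QQ,\bar\mu\ra$, not the loss-level difference $L^N-G$, so the correct pairing is against $\QQ(\hat\pp)=\lambda$ rather than the constant function $1$ (both lie in $W^J_0(w,\rho)$ for the admissible weight classes, but the distinction matters if one tries to make your accounting precise). Similarly, your pointwise kernel-sum estimate $\sum_k|\phi_k(\hat\pp)|^2\lesssim w^{-2}(\hat\pp)$ is a valid alternative to the paper's route of bounding each $\Gamma_s[f]$ by $C\|f\|^2_{W^1_0}$ and then using the Hilbert--Schmidt property to sum $\sum_a\|f_a\|^2_{W^1_0}<\infty$; these are equivalent under the hypothesis $J>D+1$. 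In short, the proposal is correct in spirit and matches the paper's proof, with a somewhat more convoluted handling of the coupling-error term and a variant formulation of the Hilbert--Schmidt step.
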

\begin{proof}
For notational convenience we shall write $\theta_{N}$ in place of $\theta_{N,\kappa}$. Let us denote by $\eta^{N}_{t}=\mu^{N}_{t}-\mu^{N,G}_{t}$ and let $f\in \mathcal{C}^{2}_{c}(\hat{\PP})$. For notational convenience we define the operator
\begin{equation*}
(\mathcal{G}_{x,\nu,\mu}f)(\hat \pp)=(\genL_1f)(\hat \pp)+ (\genL^{x}_3 f)(\hat \pp)+ \la \QQ,\nu\ra
 (\genL_2f)(\hat \pp)+\la \genL_2f,\mu\ra\QQ(\hat \pp)
\end{equation*}
After some term rearrangement, It\^{o}'s formula gives us, via the representations of Lemmas \ref{L:bQDef}-\ref{L:Qchar},
\begin{eqnarray}
\left<f,\eta^{N}_{t}\right>&=&\int_{0}^{t}\left<\mathcal{G}_{X_{s},\mu_{s}^{N,G},\mu_{s}^{N}}f,\eta^{N}_{s}\right>ds+\int_{0}^{t}\left<\genL_2 f, \mu^{N,G}_{s}\right>\left<\QQ,\mu^{N,G}_{s}-\bar{\mu}_{s}\right>ds+\int_{0}^{t}\left<\genL_{4}^{X_{s}}f,\eta^{N}_{s}\right>dV_{s}\nonumber\\
& &-\frac{1}{N}\sum_{n=1}^{N}\int_{0}^{t}\left<\genL_2 f, \mu^{N}_{s}\right>d\mathcal{N}^{\NN}_{s}+\sum_{n=1}^{N}\int_{0}^{t}\hat{B}^{\NN}[f](s)d\mathcal{N}^{\NN}_{s}+
\int_{0}^t \hat{\genA}^{N}[f](X_{s},\mu^{N}_{s})ds\nonumber\\
& &+\frac{1}{N}\sum_{n=1}^{N}\int_{0}^{t} f(\lambda^{\NN}_{s})d\mathcal{N}^{\NN}_{s}-\frac{1}{N}\sum_{n=1}^{N}\int_{0}^{t} f(\lambda^{\NN,G}_{s})d\mathcal{N}^{\NN,G}_{s}\nonumber\\
& &+\frac{1}{N}\sum_{n=1}^{N}\int_{0}^{t}\left(\sigma_{\NN}\sqrt{\lambda^{\NN}_{s}}\genL_2 f(\lambda^{\NN}_{s})-\sigma_{\NN}\sqrt{\lambda^{\NN,G}_{s}}\genL_2 f(\lambda^{\NN,G}_{s})\right)dW^{n}_{s}\nonumber
\end{eqnarray}

Using the bounds of Lemma \ref{L:AuxilliaryBounds}, It\^{o}'s formula and the fact that common jumps occur with probability zero, we have
\begin{eqnarray}
\BE\left<f,\eta^{N}_{t\wedge \theta_{N} }\right>^{2}&=&\int_{0}^{t}\BE\left[2\left<f,\eta^{N}_{s}\right>\left<\mathcal{G}_{X_{s},\mu_{s}^{N,G},\mu_{s}^{N}}f,\eta^{N}_{s}\right>+\left|\left<\genL_{4}^{X_{s}}f,\eta^{N}_{s}\right>\right|^{2}\right]\chi_{\{\theta_{N}\geq s\}}ds\nonumber\\
& &+ \BE \int_{0}^{t}2\left<f,\eta^{N}_{s}\right>\left<\genL_2 f, \mu^{N,G}_{s}\right>\left<\QQ,\mu^{N,G}_{s}-\bar{\mu}_{s}\right>\chi_{\{\theta_{N}\geq s\}}ds+ O\left(\frac{1}{N}\right)\nonumber
\end{eqnarray}
where the $O\left(\frac{1}{N}\right)$ term originates from the martingales and from Lemma \ref{L:AuxilliaryBounds}. Next we apply Young's inequality to get
\begin{eqnarray}
\BE\left<f,\eta^{N}_{t\wedge\theta_{N}}\right>^{2}&\leq&\int_{0}^{t}\BE\left[2\left<f,\eta^{N}_{s}\right>\left<\mathcal{G}_{X_{s},\mu_{s}^{N,G},\mu_{s}^{N}}f,\eta^{N}_{s}\right>+\left|\left<\genL_{4}^{X_{s}}f,\eta^{N}_{s}\right>\right|^{2}\right]\chi_{\{\theta_{N}\geq s\}}ds\nonumber\\
& &+ \BE \int_{0}^{t}\left[2\left<f,\eta^{N}_{s}\right>^{2}+\frac{1}{2}\left<\genL_2 f, \mu^{N,G}_{s}\right>^{2}\left<\QQ,\mu^{N,G}_{s}-\bar{\mu}_{s}\right>^{2}\right]\chi_{\{\theta_{N}\geq s\}}ds+ O\left(\frac{1}{N}\right)\nonumber\\
&=&\int_{0}^{t}\BE\left[2\left<f,\eta^{N}_{s}\right>\left<\mathcal{G}_{X_{s},\mu_{s}^{N,G},\mu_{s}^{N}}f,\eta^{N}_{s}\right>+\left|\left<\genL_{4}^{X_{s}}f,\eta^{N}_{s}\right>\right|^{2}+2\left<f,\eta^{N}_{s}\right>^{2}\right]\chi_{\{\theta_{N}\geq s\}}ds\nonumber\\
& &+ \BE \int_{0}^{t}\frac{1}{2}\left<\genL_2 f, \mu^{N,G}_{s}\right>^{2}\left<\QQ,\mu^{N,G}_{s}-\bar{\mu}_{s}\right>^{2}\chi_{\{\theta_{N}\geq s\}}ds+ O\left(\frac{1}{N}\right)\nonumber
\end{eqnarray}
Let us next bound the term on the last line of the previous equation. It is easy to see that
\begin{equation*}
Y_{s}\Def N\left<\QQ,\mu^{N,G}_{s}-\bar{\mu}_{s}\right>=\sum_{n=1}^{N}\left(\lambda^{\NN,G}_{s}\dfi^{\NN,G}_{s}-\left<\QQ,\bar{\mu}_{s}\right>\right)
\end{equation*}
is a  discrete time martingale with respect to $\BP(\cdot |\mathcal{V})$.
Thus, by Theorem 3.2 of \citeasnoun{Burkholder}, we have that
\begin{equation*}
\BE\left[\left|Y_{s}\right|^{2}\Big|\mathcal{V}\right]\leq C \BE\left[\left|S(Y_{s})\right|^{2}\Big|\mathcal{V}\right]
\end{equation*}
where, maintaining the notation of \citeasnoun{Burkholder},
\begin{equation*}
S(Y_{s})=\sqrt{\sum_{n=1}^{N}\left[\lambda^{\NN,G}_{s}\dfi^{\NN,G}_s-\la \QQ,\bar{\mu}_{s}\ra\right]^{2}}.
\end{equation*}

Therefore, recalling the bound from Lemma \ref{L:macrobound} we obtain
\begin{eqnarray}
\frac{1}{N}\BE \left|Y_{s}\right|^{2}&=&\frac{1}{N}\BE\left[\BE\left[\left|Y_{s}\right|^{2}\Big|\mathcal{V}\right]\right]\nonumber\\
&\leq&\frac{C}{N}\BE\left[\BE\left[\left|S(Y_{s})\right|^{2}\Big|\mathcal{V}\right]\right]\nonumber\\
&=&\frac{C}{N}\BE\sum_{n=1}^{N}\left[\lambda^{\NN,G}_{s}\dfi^{\NN,G}_s-\la \QQ,\bar{\mu}_{s}\ra\right]^{2}\nonumber\\
& \leq& C\nonumber
\end{eqnarray}
This bound implies that
\begin{equation*}
\BE \int_{0}^{t}\left[\frac{1}{2}\left<\genL_2 f, \mu^{N,G}_{s}\right>^{2}\left<\QQ,\mu^{N,G}_{s}-\bar{\mu}_{s}\right>^{2}\chi_{\{\theta_{N}\geq s\}}\right]ds\leq \frac{1}{N}C(t,K,\kappa)\left\Vert f\right\Vert^{2}_{W^{1}_0}
\end{equation*}

For $J$ large enough $(J>D+1)$, Proposition 3.15 of \citeasnoun{GyongiKrylov1992} and  Theorem 6.53 of \citeasnoun{Adams1978} immediately imply that the embedding $W^{J}_{0}\hookrightarrow W^{1}_{0}$ is of Hilbert-Schmidt type. So, by Lemma 1 and Theorem 2 in Chapter 2.2 of \citeasnoun{GelfandVilenkin1964}, if $\{f_{a}\}_{a\geq 1}$ is a complete orthonormal basis for $W^{J}_{0}$, then $\sum_{a\geq 1}\left\Vert f_{a}\right\Vert^{2}_{W^{1}_0}<\infty$.

Hence, if $\{f_{a}\}_{a\geq 1}$ is a complete orthonormal basis for $W^{J}_0(w,\rho)$ we obtain that
\begin{align*}
\BE\left<f_{a},\eta^{N}_{t\wedge\theta_{N}}\right>^{2}&\leq
\int_{0}^{t}\BE\left[2\left<f_{a},\eta^{N}_{s}\right>\left<\mathcal{G}_{X_{s},\mu_{s}^{N,G},\mu_{s}^{N}}f_{a},\eta^{N}_{s}\right>+\left|\left<\genL_{4}^{X_{s}}f_{a},\eta^{N}_{s}\right>\right|^{2}+2\left<f_{a},\eta^{N}_{s}\right>^{2}\right]\chi_{\{\theta_{N}\geq s\}}ds\nonumber\\
&\hspace{1cm}+
\frac{1}{N}C\left\Vert f_{a}\right\Vert^{2}_{W^{1}_0}
\end{align*}
Summing over $a\geq 1$ we then obtain
\begin{equation*}
\BE\left\Vert\eta^{N}_{t\wedge\theta_{N}}\right\Vert^{2}_{-J}\leq
\int_{0}^{t}\BE\left[2\left<\eta^{N}_{s},\mathcal{G}^{*}_{X_{s},\mu_{s}^{N,G},\mu_{s}^{N}}\eta^{N}_{s}\right>_{-J}+\left\Vert \genL_{4}^{*,X_{s}}\eta^{N}_{s}\right\Vert^{2}_{-J}+2\left\Vert\eta^{N}_{s}\right\Vert^{2}_{-J}\right]\chi_{\{\theta_{N}\geq s\}}ds+
\frac{1}{N}C.
\end{equation*}

Then as in Lemma \ref{L:Gronwall} we get that
\begin{equation*}
\BE\left[2\left<\eta^{N}_{s},\mathcal{G}^{*}_{X_{s},\mu_{s}^{N,G},\mu_{s}^{N}}\eta^{N}_{s}\right>_{-J}+\left\Vert \genL_{4}^{*,X_{s}}\eta^{N}_{s}\right\Vert^{2}_{-J}\right]\chi_{\{\theta_{N}\geq s\}}\leq C \BE\left\Vert\eta^{N}_{s}\right\Vert^{2}_{-J}\chi_{\{\theta_{N}\geq s\}}
\end{equation*}
So,
\begin{equation*}
\BE\left\Vert\eta^{N}_{t\wedge\theta_{N}}\right\Vert^{2}_{-J}\leq
C\left[\int_{0}^{t}\BE\left\Vert\eta^{N}_{s\wedge\theta_{N}}\right\Vert^{2}_{-J}ds+
\frac{1}{N}\right]
\end{equation*}
Finally, an application of Gronwall's lemma concludes the proof.
\end{proof}

The following lemma provides a uniform bound for the fluctuation process.
\begin{lemma}\label{L:KeyEstimate2}
Let $J>2D+1$ and weights $(w,\rho)$ such that Condition \ref{A:AssumptionsOnWeights} holds. For every $T>0$, there is a constant $C$ independent of $N$ such that
\begin{equation*}
\sup_{N\in\N}\sup_{0\le t\le T}\BE\left(\left\Vert \Xi^{N}_{t\wedge \theta_{N,\kappa}}\right\Vert^{2}_{W^{-J}_{0}(w,\rho)}
\right)\leq C.
\end{equation*}
\end{lemma}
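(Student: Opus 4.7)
The plan is to decompose
\begin{equation*}
\Xi^N_t = \sqrt{N}(\mu^N_t - \mu^{N,G}_t) + \sqrt{N}(\mu^{N,G}_t - \bar\mu_t),
\end{equation*}
so that $\|\Xi^N_t\|^2_{-J} \leq 2N\|\mu^N_t - \mu^{N,G}_t\|^2_{-J} + 2N\|\mu^{N,G}_t - \bar\mu_t\|^2_{-J}$. Lemma \ref{L:KeyEstimate}, which only requires $J > D+1$, already controls the first piece at $t \wedge \theta_{N,\kappa}$ by a constant uniform in $N$, so the main task reduces to controlling the purely CLT-type term $\sqrt{N}(\mu^{N,G}_t - \bar\mu_t)$.

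Conditionally on the $\sigma$-algebra $\mathcal{V}$, the coupled intensities $\lambda^{N,n,G}$ satisfy independent copies of the effective equation (\ref{E:EffectiveEquation1}) driven by the i.i.d.\ pairs $(W^n, \ee_n)$. Combined with the i.i.d.\ initial parameters $\hat\pp^N_0 \sim \nu^{\otimes N}$ (independent of $\mathcal{V}$), this makes the triples $\{(\hat\pp^{N,n,G}_t, \dfi^{N,n,G}_t)\}_{n=1}^N$ i.i.d.\ given $\mathcal{V}$, with common conditional mean measure equal to $\bar\mu_t$ by Lemma \ref{L:Qchar}. A direct conditional variance computation then gives, for every $f \in W^J_0(w,\rho)$,
\begin{equation*}
N\,\BE\bigl[\la f, \mu^{N,G}_t - \bar\mu_t\ra^2 \bigm| \mathcal{V}\bigr] = \mathrm{Var}\bigl(f(\hat\pp^{N,1,G}_t)\dfi^{N,1,G}_t \bigm| \mathcal{V}\bigr) \leq \BE\bigl[f^2(\hat\pp^{N,1,G}_t) \bigm| \mathcal{V}\bigr].
\end{equation*}
Choosing a complete orthonormal basis $\{f_a\}_{a \geq 1}$ of $W^J_0(w,\rho)$ and summing via Parseval in the dual,
\begin{equation*}
N\,\BE\|\mu^{N,G}_t - \bar\mu_t\|^2_{-J} = \sum_{a \geq 1} N\,\BE \la f_a, \mu^{N,G}_t - \bar\mu_t\ra^2 \leq \BE\Bigl[\sum_{a \geq 1} f_a^2(\hat\pp^{N,1,G}_t)\Bigr].
\end{equation*}

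The final step is to show that the inner sum, which equals the squared dual norm $\|\delta_{\hat\pp}\|^2_{-J}$ of the point evaluation functional at $\hat\pp$, is bounded by a function of polynomial growth whose expectation under $\hat\pp^{N,1,G}_{t \wedge \theta_{N,\kappa}}$ is uniformly bounded in $N$. The hypothesis $J > 2D+1$ enters precisely here: it allows the factorization $W^J_0(w,\rho) \hookrightarrow W^{D+1}_0(w,\rho) \hookrightarrow C(\hat\PP)$ in the appropriate weighted sense, where the first arrow is Hilbert-Schmidt by Proposition 3.15 of \citeasnoun{GyongiKrylov1992} combined with Theorem 6.53 of \citeasnoun{Adams1978} (since the regularity gap is $J - (D+1) > D$), and the second is a standard weighted Sobolev embedding. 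The combination yields $\sum_a f_a^2(\hat\pp) \leq C\,\Phi(\hat\pp)$ for some $\Phi$ of polynomial growth in $|\hat\pp|$, and Lemma \ref{L:macrobound} together with the truncation built into $\theta_{N,\kappa}$ then delivers the required uniform bound.

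The principal obstacle I anticipate is exactly this pointwise summability estimate: Lemma \ref{L:KeyEstimate} needed only $\sum_a \|f_a\|^2_{W^1_0} < \infty$, whereas the CLT bound above demands \emph{pointwise} rather than Sobolev-norm control of the basis $\{f_a\}$. This need for one extra step of Sobolev embedding past the Hilbert-Schmidt factorization is precisely what raises the regularity threshold from $J > D+1$ in the previous lemma to $J > 2D+1$ here.
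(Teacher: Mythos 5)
Your proposal is correct and follows essentially the same route as the paper: the same coupling decomposition $\Xi^N = \sqrt{N}\eta^N + \sqrt{N}\eta^{N,G}$, Lemma~\ref{L:KeyEstimate} for the first piece, and the conditionally i.i.d.\ structure of $\mu^{N,G}$ via Lemmas~\ref{L:bQDef}--\ref{L:Qchar} for the second, closed off by the Hilbert--Schmidt embedding at level $J>2D+1$. The only cosmetic difference is that the paper invokes the Burkholder inequality for the conditional martingale $\sum_n[f(\hat\pp^{N,n,G}_t)\dfi^{N,n,G}_t-\la f,\bar\mu_t\ra]$ and summarizes the bound as $\BE\la f,\eta^{N,G}_t\ra^2\le \frac{C}{N}\|f\|^2_{W^{D+1}_0}$ before summing over the basis, whereas you compute the conditional variance directly (Burkholder is indeed unnecessary for a centered i.i.d.\ sum) and then sum to bound $\sum_a f_a^2(\hat\pp)=\|\delta_{\hat\pp}\|^2_{-J}$ by a polynomially growing weight; these are the same estimate in a slightly different order, and your explanation of why the threshold jumps from $J>D+1$ to $J>2D+1$ is exactly right.
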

\begin{proof}
Clearly $\Xi^{N}_{t}=\sqrt{N}(\mu^{N}_{t}-\mu^{N,G}_{t})+\sqrt{N}(\mu^{N,G}_{t}-\bar{\mu}_{t})=\sqrt{N}\eta^{N}_{t}+\sqrt{N}\eta^{N,G}_{t}$.
Notice now that by Lemmas \ref{L:bQDef} and \ref{L:Qchar}
\begin{equation*}
N\left<f,\eta^{N,G}_{t}\right>=\sum_{n=1}^{N}\left[f(\lambda^{\NN,G}_{t})\dfi^{\NN,G}_{t}-\left<f,\bar{\mu}_{t}\right>\right]
\end{equation*}
is a $\BP(\cdot|\mathcal{V}_{t})$ discrete time martingale, which in turn implies via Theorem 3.2 of \citeasnoun{Burkholder} and Proposition 3.15 of \citeasnoun{GyongiKrylov1992} (similarly to the proof of Lemma \ref{L:KeyEstimate}) that
\begin{equation*}
\BE\left<f,\eta^{N,G}_{t}\right>^{2}\leq \frac{1}{N}C \left\Vert f\right\Vert^{2}_{W^{D+1}_0}.
\end{equation*}

Therefore, if $\{f_{a}\}_{a\geq 1}$ is a complete orthonormal basis for $W^{J}_0(w,\rho)$ with $J>2D+1$, Parseval's identity gives (similarly to Lemma \ref{L:KeyEstimate})
\begin{equation}
 N\BE\left\Vert \eta^{N,G}_{t\wedge\theta_{N}}\right\Vert^{2}_{-J}\leq C.\label{Eq:Estimate0}
\end{equation}

Since,
\begin{equation*}
\BE\left<f_{a},\Xi^{N}_{t\wedge\theta_{N}}\right>^{2}\leq C\left[ N\BE\left<f_{a},\eta^{N}_{t\wedge\theta_{N}}\right>^{2}+N\BE\left<f_{a},\eta^{N,G}_{t\wedge\theta_{N}}\right>^{2}\right]
\end{equation*}
summing over $a\geq 1$ we then obtain
\begin{equation*}
\BE\left\Vert \Xi^{N}_{t\wedge\theta_{N}}\right\Vert^{2}_{-J}\leq C\left[N\BE\left\Vert \eta^{N}_{t\wedge\theta_{N}}\right\Vert^{2}_{-J}+N\BE\left\Vert \eta^{N,G}_{t\wedge\theta_{N}}\right\Vert^{2}_{-J}\right]
\end{equation*}
which by Lemma \ref{L:KeyEstimate} (obviously $J>2D+1>D+1$) and (\ref{Eq:Estimate0}) yields the statement of the lemma.
\end{proof}

Next we discuss relative compactness  for $\{\sqrt{N}\tilde{\mart}^{N}_{t}, t\in[0,T]\}_{N\in \N}$. In particular, we have the following lemma.
\begin{lemma}\label{L:MuPrelimitSpace}
Let $T>0$ and let $J>D+1$ and weights $(w,\rho)$ such that Condition \ref{A:AssumptionsOnWeights} holds. The process $\{\sqrt{N}\tilde{\mart}^{N}_{t}, t\in[0,T]\}_{N\in \N}$ is a $W^{-J}_0(w,\rho)-$valued martingale such that
\begin{equation*}
\sup_{N\in\N}\BE\left[\sup_{0\leq t\leq T}\left\Vert  \sqrt{N}\tilde{\mart}^{N}_{t}\right\Vert^{2}_{W^{-J}_0(w,\rho)}\right]\leq C <\infty.\label{Eq:PrelimitMart}
\end{equation*}
Furthermore, it is relatively compact in
$D_{W^{-(J+D)}_0(w,\rho)}[0,T]$.
\end{lemma}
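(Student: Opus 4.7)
My plan is to prove the three claims in order, largely paralleling (and simplifying) the argument used for Lemma \ref{L:KeyEstimate}.

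First, the martingale property is almost bookkeeping: for each fixed $f\in C^2_b(\hat\PP)$, the scalar process $\langle f,\sqrt{N}\tilde\mart^N_t\rangle$ is by construction a sum of It\^o integrals against the independent Brownian motions $W^n$ and of integrals against the compensated pure-jump martingales $\mathcal{N}^{N,n}$, so it is a square-integrable real martingale in the natural filtration. Once the $L^2$ bound of the next step is established, the map $f\mapsto \langle f,\sqrt{N}\tilde\mart^N_t\rangle$ extends continuously to $W^J_0(w,\rho)$ and, by separability of $W^J_0(w,\rho)$, defines a $W^{-J}_0(w,\rho)$-valued martingale.

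For the uniform $L^2$ bound, the conditional covariation formula (\ref{L:PrelimitCovariance}) in Lemma \ref{L:AuxilliaryBounds} gives
\[
\BE\langle f,\sqrt{N}\tilde\mart^N_t\rangle^2 \;=\; \BE\int_0^t \mathcal{C}^N(f,f)(s)\,ds,
\]
where $\mathcal{C}^N(f,f)$ is a linear combination of terms such as $\langle \genL_5(f,f),\mu^N_s\rangle$, $\langle \genL_6(f,f),\mu^N_s\rangle$, $\langle \genL_2 f,\mu^N_s\rangle^2\langle\QQ,\mu^N_s\rangle$, and $\langle \genL_7 f,\mu^N_s\rangle\langle \genL_2 f,\mu^N_s\rangle$. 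Each of these pairings can be bounded by a weight-splitting Cauchy--Schwarz estimate of exactly the kind used in the derivation of (\ref{Eq:BoundForLemmaDomainOfDefinitionOfG}), combined with Lemma \ref{L:macrobound} to control the intensity moments of $\mu^N_s$ uniformly in $N$; the upshot is
\[
\BE\langle f,\sqrt{N}\tilde\mart^N_t\rangle^2 \;\leq\; C(T)\,\|f\|^2_{W^1_0(w,\rho)}
\]
uniformly in $N$. Pick a complete orthonormal basis $\{f_a\}_{a\geq 1}$ of $W^J_0(w,\rho)$; because $J>D+1$, Proposition 3.15 of Gyongy--Krylov yields that the embedding $W^J_0\hookrightarrow W^1_0$ is Hilbert--Schmidt, so $\sum_a \|f_a\|^2_{W^1_0}<\infty$ and Parseval gives
\[
\BE\|\sqrt{N}\tilde\mart^N_t\|^2_{-J} \;=\; \sum_a \BE\langle f_a,\sqrt{N}\tilde\mart^N_t\rangle^2 \;\leq\; C(T)\sum_a \|f_a\|^2_{W^1_0} \;<\;\infty.
\]
The supremum-in-time version then follows by applying Doob's $L^2$ maximal inequality to each coordinate martingale $\langle f_a,\sqrt{N}\tilde\mart^N_\cdot\rangle$ before summing, using monotone convergence to exchange sum and supremum.

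For relative compactness in $D_{W^{-(J+D)}_0(w,\rho)}[0,T]$, the loss of $D$ derivatives is precisely what buys compactness: the weighted Sobolev embedding $W^{J+D}_0\hookrightarrow W^J_0$ is compact (Rellich--Kondrachov together with the decay of $w$ at infinity), and hence its dual $W^{-J}_0\hookrightarrow W^{-(J+D)}_0$ is also compact. Combined with the previous step's $L^2$ bound and Chebyshev, this yields compact containment in $W^{-(J+D)}_0$ uniformly in $N$. To verify the oscillation condition I would use a Jakubowski--Mitoma-type reduction: it suffices that for each $f$ in a dense subset of $W^{J+D}_0$, the real martingale $\langle f,\sqrt{N}\tilde\mart^N_\cdot\rangle$ is tight in $D_\R[0,T]$; but its predictable quadratic variation is absolutely continuous with density uniformly bounded in $N$ by exactly the step-2 estimates, so tightness follows from Rebolledo's criterion. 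The main technical obstacle is securing the scalar bound $\BE\langle f,\sqrt{N}\tilde\mart^N_t\rangle^2 \leq C(T)\|f\|^2_{W^1_0}$ with a constant independent of $N$: the nonlocal quadratic and bilinear terms in the covariation (notably $\langle \genL_2 f,\mu^N_s\rangle^2\langle \QQ,\mu^N_s\rangle$ and $\langle \genL_7 f,\mu^N_s\rangle\langle \genL_2 f,\mu^N_s\rangle$) can only be controlled by choosing the right weight-splitting, which is what (\ref{Eq:BoundForLemmaDomainOfDefinitionOfG}) and Lemma \ref{L:macrobound} are tailored to handle.
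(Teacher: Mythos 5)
Your approach to the martingale property and to the uniform $L^2$ bound parallels the paper's and is sound: the covariation formula from Lemma \ref{L:AuxilliaryBounds}, the weight-splitting Cauchy--Schwarz estimates of the type in (\ref{Eq:BoundForLemmaDomainOfDefinitionOfG}), the moment control from Lemma \ref{L:macrobound}, the Hilbert--Schmidt embedding $W^J_0\hookrightarrow W^1_0$ for $J>D+1$, and then Parseval plus Doob's $L^2$ maximal inequality (applied coordinatewise before summing) all go through as you describe. Your observation that the compact embedding $W^{J+D}_0\hookrightarrow W^J_0$ dualizes to a compact embedding of duals, giving compact containment via Chebyshev, is a reasonable way to read the loss of $D$ derivatives, and is consistent with what the paper does via the inequality $\Vert\cdot\Vert_{W^{-(J+D)}_0}\leq C\Vert\cdot\Vert_{W^{-J}_0}$.

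The gap is in the oscillation control for tightness. You claim the predictable quadratic variation of $\langle f,\sqrt{N}\tilde\mart^N_\cdot\rangle$ is ``absolutely continuous with density uniformly bounded in $N$ by exactly the step-2 estimates,'' and feed this into Rebolledo's criterion. But the density $\Gamma^N_s[f]$ is a \emph{random} quantity involving, e.g., $\langle \genL_2 f,\mu^N_s\rangle^2\langle\QQ,\mu^N_s\rangle$ and weighted integrals $\int \lambda/w^2(\lambda)\,\mu^N_s(d\lambda)$, which are not pathwise bounded uniformly in $N$ (or even in $\omega$). Lemma \ref{L:macrobound} controls such quantities only in expectation, which is enough for your step-2 unconditional $L^2$ bound but not for the pathwise or $\filt^N_r$-conditional bounds that Aldous/Rebolledo requires. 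The paper resolves exactly this by localizing with the stopping time $\theta_{N,\kappa}^p$ from (\ref{Eq:StoppingTime}): on $\{\theta_{N,\kappa}^p\geq s\}$ the density $\Gamma^N_s[f]$ is bounded by $C(t,K,\kappa)\|f\|^2_{W^1_0}$, so one obtains a genuine conditional increment bound $\BE[\Vert\sqrt{N}\tilde\mart^N_{t\wedge\theta}-\sqrt{N}\tilde\mart^N_{r\wedge\theta}\Vert^2_{-(J+D)}\mid\filt^N_r]\leq C(t-r)$, which together with the uniform $L^2$ bound gives relative compactness for the stopped processes (Theorem 8.6 of Chapter 3 of Ethier--Kurtz and page 35 of Joffe--M\'etivier); the unstopped statement is then recovered from (\ref{Eq:BoundedStoppingTime}), i.e.\ $\sup_N\BP(\theta_{N,\kappa}^p\leq T)\to 0$ as $\kappa\to\infty$. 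You need to insert this localization; without it, the Rebolledo step as written is not justified.
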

\begin{proof}
 Clearly, it is enough to prove tightness for $\{\sqrt{N}\tilde{\mart}^{N}_{t\wedge\theta_{N,\kappa}^{p}}, t\in[0,T]\}_{N\in \N}$. Let us define
\begin{equation*}
\Gamma^{N}_{s}[f]=\la \genL_5 f, \mu^{N}_{s}\ra+\la \genL_6 f,\mu^{N}_{s}\ra+\la \genL_2 f,\mu^{N}_{s}\ra^{2}\la \QQ,\mu^{N}_{s}\ra-2\la \genL_7 f,\mu^{N}_{s}\ra\la \genL_2 f,\mu^{N}_{s}\ra
\end{equation*}

Each one of the terms in the last display can be bounded by above similarly to the derivation of the upper bound for the term $\la \genL_{2}f,\mu \ra$ in (\ref{Eq:BoundForLemmaDomainOfDefinitionOfG}). The bound from
 Lemma \ref{L:macrobound} is then used in order to treat the integrals of the weight functions with respect to $\mu^{N}_{s}$. It then follows that there exists a constant $C$ such that
\begin{equation*}
\BE\left[\int_{r}^{t}\Gamma_{s}[f]\chi_{\{\theta_{N,\kappa}^{p}\geq s\}}ds\Big |\filt^N_r\right]\leq
C(t,K,\kappa)\left\Vert f\right\Vert_{W^{1}_0}^{2}(t-r) \label{E:Gamma3}
\end{equation*}
Therefore, if $\{f_{a}\}_{a\geq 1}$ is a complete orthonormal basis for $W^{J}_0(w,\rho)$ with $J>D+1$, we obtain
\begin{eqnarray}
\BE\left[\left\Vert  \sqrt{N}\tilde{\mart}^{N}_{t\wedge \theta_{N,\kappa}^{p}}-\sqrt{N}\tilde{\mart}^{N}_{r\wedge \theta_{N,\kappa}^{p}}  \right\Vert^{2}_{W^{-J}_0(w,\rho)}|\filt^N_r\right]&\leq&
\sum_{a\geq 1}\BE\left[\la f_{a},\sqrt{N}\tilde{\mart}^{N}_{t\wedge \theta_{N,\kappa}^{p}}-\sqrt{N}\tilde{\mart}^{N}_{r\wedge \theta_{N,\kappa}^{p}}\ra^{2} |\filt^N_r\right]\nonumber\\
&\leq&C(T,K,\kappa)\sum_{a\geq 1}\BE\left[\int_{r}^{t}\Gamma_{s}[f_{a}]\chi_{\{\theta_{N,\kappa}^{p}\geq s\}}ds|\filt^N_r\right]\nonumber\\
&\leq&C(T,K,\kappa)\left[\sum_{a\geq 1}\left\Vert f_{a}\right\Vert_{W^{1}_{0}}^{2}\right] (t-r)\label{Eq:EstimateMu}
\end{eqnarray}
As in the proof of Lemma \ref{L:KeyEstimate}, the restriction $J>D+1$ implies that $\sum_{a\geq 1}\left\Vert f_{a}\right\Vert_{W^{1}_{0}}^{2}<\infty$. Similarly, we can also show that there exists a constant $C$ such that
\begin{eqnarray}
\sup_{N\in\N}\BE\left[\sup_{0\leq t\leq T}\left\Vert  \sqrt{N}\tilde{\mart}^{N}_{t\wedge \theta_{N,\kappa}^{p}} \right\Vert^{2}_{W^{-J}_0}\right]&\leq & C\nonumber
\end{eqnarray}

Moreover, due to the inequality $\left\Vert \cdot\right\Vert_{W^{-(J+D)}_0}\leq C \left\Vert \cdot\right\Vert_{W^{-J}_0}$, the inequality (\ref{Eq:EstimateMu}) implies that
\begin{eqnarray*}
\sup_{N\in\N}\BE\left[\left\Vert  \sqrt{N}\tilde{\mart}^{N}_{t\wedge \theta_{N,\kappa}^{p}}-\sqrt{N}\tilde{\mart}^{N}_{r\wedge \theta_{N,\kappa}^{p}}  \right\Vert^{2}_{W^{-(J+D)}_{0}}|\filt^N_r \right]\leq C(t-r)
\end{eqnarray*}
which obviously goes to zero as $|t-r|\downarrow 0$. The last two displays give relative compactness of $\{\sqrt{N}\tilde{\mart}^{N}_{t}, t\in[0,T]\}_{N\in \N}$ in  $D_{W^{-(J+D)}_0}[0,T]$ (Theorem 8.6 of Chapter 3 of \citeasnoun{MR88a:60130} and page 35 of \citeasnoun{JoffeMetivier1986}).

The uniform bound of the lemma follows  by the fact that $\theta_{N,\kappa}^{p}\rightarrow\infty$ as $\kappa\rightarrow\infty$, see (\ref{Eq:BoundedStoppingTime}).
\end{proof}

Regarding the convergence of the martingale $\sqrt{N}\tilde{\mart}^{N}$ we have the following lemma.
\begin{lemma}\label{L:ConvergenceOfMartingale}
Let $J>2D+1$ and weights $(w,\rho)$ such that Condition \ref{A:AssumptionsOnWeights} holds. The sequence
 $\left\{\sqrt{N}\tilde{\mart}^{N}_{t}, t\in[0,T]\right\}_{N\in \N}$ is relatively compact in $D_{W^{-J}_0(w,\rho)}[0,T]$. Moreover, it converges towards a distribution
valued martingale $\left\{\bar{\mart}_{t}, t\in[0,T]\right\}$ with conditional  (on the $\sigma-$algebra $\mathcal{V}$) covariance function, defined, for $f,g\in W^{J}_0(w,\rho)$, by (\ref{Eq:ConditionalCovariation}). The martingale $\left\{\bar{\mart}_{t}, t\in[0,T]\right\}$ is conditionally on the $\sigma-$algebra $\mathcal{V}$, Gaussian.
\end{lemma}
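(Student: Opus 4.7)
\medskip

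\noindent\textbf{Proof proposal.} The plan is to establish the three assertions separately: relative compactness in the stated finer topology, identification of the limit $\bar{\mart}$ as a distribution-valued martingale with conditional covariation (\ref{Eq:ConditionalCovariation}), and finally conditional Gaussianity given $\mathcal{V}$.

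For relative compactness in $D_{W^{-J}_{0}(w,\rho)}[0,T]$, I would simply apply Lemma \ref{L:MuPrelimitSpace} with $J$ replaced by $J-D$. Since $J>2D+1$ we have $J-D>D+1$, so the hypothesis of that lemma is satisfied and the conclusion yields relative compactness in $D_{W^{-((J-D)+D)}_{0}(w,\rho)}[0,T]=D_{W^{-J}_{0}(w,\rho)}[0,T]$, together with the uniform $L^{2}$ bound on $\sup_{0\le t\le T}\|\sqrt{N}\tilde{\mart}^{N}_{t}\|_{W^{-J}_{0}}$.

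For identification of the limit, I would extract a subsequential weak limit of the quadruple $(\mu^{N},\sqrt{N}\tilde{\mart}^{N},V)$ in the product Skorokhod space, using the relative compactness of $\mu^{N}$ (Lemma 7.1 in \citeasnoun{GieseckeSpiliopoulosSowersSirigano2012}) and the result just obtained. The uniform $L^{2}$ bound on $\sqrt{N}\tilde{\mart}^{N}$ furnishes uniform integrability, so for each fixed $f\in W^{J}_{0}(w,\rho)$ the scalar martingale property of $\la f,\sqrt{N}\tilde{\mart}^{N}\ra$ under its own filtration passes to the limit $\la f,\bar{\mart}\ra$ by a standard argument (multiplying by bounded continuous functionals of the past and taking the weak limit). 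The prelimit conditional covariation is given explicitly by (\ref{L:PrelimitCovariance}) in Lemma \ref{L:AuxilliaryBounds}, and the integrand involves only the pairings $\la \genL_{i}(f,g),\mu^{N}_{s}\ra$ for $i=5,6$, $\la \genL_{2}f,\mu^{N}_{s}\ra$, $\la \genL_{7}f,\mu^{N}_{s}\ra$ and $\la \QQ,\mu^{N}_{s}\ra$. Continuity of these linear functionals on the Sobolev space (obtained by the same estimates as in Lemma \ref{L:DomainOfDefinitionOfG}) and weak convergence $\mu^{N}\to\bar{\mu}$, combined with the uniform moment bound of Lemma \ref{L:macrobound} to pass to the limit inside the conditional expectation, identify the limiting conditional covariation with (\ref{Eq:ConditionalCovariation}).

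Conditional Gaussianity is the main obstacle. The strategy is to reduce $\sqrt{N}\tilde{\mart}^{N}$, which is $1/\sqrt{N}$ times a sum over $n=1,\dots,N$ of martingales driven by the idiosyncratic $W^{n}$ and the compensated default indicator $\mathcal{N}^{N,n}$, to an asymptotically equivalent sum of martingales that are \emph{independent} conditional on $\mathcal{V}$. The obstruction to exact conditional independence is the presence of $\mu^{N}_{s}$ inside $\tilde{\jump}^{f}_{N,n}(s)=\la \genL_{2}f,\mu^{N}_{s}\ra - f(\hat\pp^{N,n}_{s})$ and inside the diffusive integrand. I would replace $\mu^{N}_{s}$ by $\bar{\mu}_{s}$ and replace the intensities $\lambda^{N,n}$ by the coupled, conditionally independent surrogates $\lambda^{N,G,n}$, controlling the $L^{2}$ error uniformly in $N$ by Lemma \ref{L:KeyEstimate} and the continuity bounds for $\genL_{2}$. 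The resulting surrogate martingale $\la f,\sqrt{N}\tilde{\mart}^{N,G}_{t}\ra$ is, conditional on $\mathcal{V}$, a sum of $N$ independent square-integrable martingales each of size $O(N^{-1/2})$. Applying the functional martingale central limit theorem conditional on $\mathcal{V}$ (Theorem~VIII.3.12 in Jacod–Shiryaev), Lindeberg's condition being verified using the $O(N^{-1/2})$ bound on jumps from (\ref{Eq:CoarseGrainJumpTerm}) and the moment bound of Lemma \ref{L:macrobound}, one obtains that $\la f,\bar{\mart}_{t}\ra$ is centered Gaussian given $\mathcal{V}$ with variance given by the diagonal of (\ref{Eq:ConditionalCovariation}). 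A Cramér–Wold argument applied to linear combinations $\sum_{i} a_{i}\la f_{i},\bar{\mart}_{t_{i}}\ra$ for finite families $\{(f_{i},t_{i})\}$ upgrades this to joint conditional Gaussianity, completing the proof.
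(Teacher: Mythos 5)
Your treatment of the first two assertions (relative compactness and identification of the conditional covariation) matches the paper's own argument: both invoke Lemma~\ref{L:MuPrelimitSpace} for tightness and pass the prelimit conditional covariation (\ref{L:PrelimitCovariance}) to the limit via the continuity of the pairings $\la\cdot,\mu^N\ra$ and the weak convergence $\mu^N\to\bar\mu$. The step where you explicitly run Lemma~\ref{L:MuPrelimitSpace} with $J-D$ in place of $J$ makes the space bookkeeping transparent, which the paper leaves implicit.

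For the third assertion, conditional Gaussianity, you take a genuinely different and substantially heavier route than the paper does. The paper's argument is a two-line appeal to structure: by Lemma~\ref{L:Continuity} any limit point $\bar\mart$ has continuous sample paths; conditionally on $\mathcal{V}$ it is a continuous square-integrable martingale whose predictable variation (an integral of functionals of $\bar\mu_s$) is $\mathcal{V}$-measurable, hence deterministic given $\mathcal{V}$; Theorem~7.1.4 of \citeasnoun{MR88a:60130} (a continuous local martingale with deterministic quadratic variation is Gaussian) then finishes the proof, with joint Gaussianity coming for free. You instead go through the prelimit: couple to the conditionally independent surrogates $\lambda^{\NN,G}$, verify Lindeberg via (\ref{Eq:CoarseGrainJumpTerm}), invoke a conditional functional martingale CLT, and upgrade via Cram\'er--Wold. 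This is in principle workable, but it reproves from scratch what the soft argument gets for free, and it has a gap as stated: Lemma~\ref{L:KeyEstimate} bounds $N\,\BE\Vert\mu^N-\mu^{N,G}\Vert^2_{W^{-J}_0(w,\rho)}$, a statement about empirical measures in a weak dual norm, whereas what you need to control the $L^2$ distance between $\sqrt{N}\tilde\mart^N$ and the conditionally independent surrogate martingale is a \emph{pathwise} estimate of the form $\sup_n\BE|\lambda^{\NN}_s-\lambda^{\NN,G}_s|^2=O(1/N)$, together with the corresponding control on the compensated jump integrals with respect to $\mathcal{N}^{\NN}$ versus $\mathcal{N}^{\NN,G}$. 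That estimate is obtainable by a Gronwall argument on the SDEs themselves (using $\BE|L^N_s-G(s)|^2=O(1/N)$), but it is not the content of Lemma~\ref{L:KeyEstimate}, and citing that lemma here does not close the gap. If you want to pursue the CLT route, you should prove the pathwise coupling bound explicitly; otherwise the paper's continuity-plus-deterministic-variation argument is strictly cleaner and avoids the surrogate construction entirely.
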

\begin{proof}
 Relative compactness follows by Lemma \ref{L:MuPrelimitSpace}. Due to continuous dependence of (\ref{L:PrelimitCovariance}) on $\mu^{N}$ and on the weak convergence of
$\mu^{N}_{\cdot}\rightarrow\bar{\mu}_{\cdot}$ by \citeasnoun{GieseckeSpiliopoulosSowersSirigano2012}, we obtain that any limit point of $\sqrt{N}\tilde{\mart}^{N}_{t}$ as $N\rightarrow\infty$, $\bar{\mart}$, will satisfy  (\ref{Eq:ConditionalCovariation}).
Conditionally on $\mathcal{V}$, the limiting $\bar{\mart}$ is a continuous square integrable martingale and its  predictable variation  is deterministic. Thus, by Theorem 7.1.4 in \citeasnoun{MR88a:60130}, it is conditionally Gaussian.
This concludes the proof.
\end{proof}

Next we discuss relative compactness of the process $\left\{\Xi^{\NN}_{t}, t\in[0,T]\right\}_{N\in \N}$.

\begin{lemma}\label{L:XiPrelimitSpace}
Let $T>0$, $J>3D+1$ and weights $(w,\rho)$ such that Condition \ref{A:AssumptionsOnWeights} holds. The process $\{\Xi^{\NN}_{t}, t\in[0,T]\}_{N\in \N}$ is relatively compact in  $D_{W^{-J}_0(w,\rho)}[0,T]$.

\end{lemma}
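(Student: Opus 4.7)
The plan is to apply an Aldous--M\'etivier type tightness criterion for $W_0^{-J}(w,\rho)$-valued c\`adl\`ag processes, combining a compact containment property with a uniform modulus-of-continuity estimate, as was done for the martingale sequence in Lemma \ref{L:MuPrelimitSpace}. The key inputs are the uniform second-moment bound of Lemma \ref{L:KeyEstimate2} in a stronger space $W_0^{-J_0}(w,\rho)$ where $J_0$ is chosen with $2D+1<J_0$ and $J-J_0>D$ (which is possible precisely because $J>3D+1$), the operator continuity estimates of Lemma \ref{L:DomainOfDefinitionOfG}, the martingale bounds from Lemma \ref{L:MuPrelimitSpace}, and the remainder control of Lemma \ref{L:AuxilliaryBounds}. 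As throughout this section, by (\ref{Eq:BoundedStoppingTime}) it is enough to work with the stopped family $\{\Xi^{\NN}_{\cdot\wedge\theta_{N,\kappa}}\}$ for fixed $\kappa$ and then let $\kappa\to\infty$.

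\textbf{Compact containment.} Fix $J_0$ with $J_0>2D+1$ and $J-J_0>D$. By Proposition 3.15 of \citeasnoun{GyongiKrylov1992}, the embedding $W_0^{-J_0}(w,\rho)\hookrightarrow W_0^{-J}(w,\rho)$ is Hilbert--Schmidt, hence compact, so every ball in $W_0^{-J_0}$ is relatively compact in $W_0^{-J}$. Starting from (\ref{Eq:PrelimitEquation}), expanding $\|\Xi^{\NN}_{t\wedge\theta_{N,\kappa}}\|_{-J_0}^{2}$ along an orthonormal basis $\{g_a\}$ of $W_0^{J_0}$ via Parseval, controlling the drift and the $V$-integral by Cauchy--Schwarz combined with Lemma \ref{L:DomainOfDefinitionOfG}, estimating the martingale piece by Doob's inequality together with the computations from Lemma \ref{L:MuPrelimitSpace} and Lemma \ref{L:AuxilliaryBounds}, and closing a Gronwall argument, one upgrades Lemma \ref{L:KeyEstimate2} to the supremum bound
\[
\sup_{N\in\mathbb{N}}\mathbb{E}\Bigl[\sup_{0\le t\le T}\bigl\|\Xi^{\NN}_{t\wedge\theta_{N,\kappa}}\bigr\|_{-J_0}^{2}\Bigr]\le C.
\]
Chebyshev's inequality applied to the compact sets $K_R=\{\xi\in W_0^{-J}(w,\rho):\|\xi\|_{-J_0}\le R\}$ then yields compact containment of $\{\Xi^{\NN}_{\cdot\wedge\theta_{N,\kappa}}\}$ in $W_0^{-J}(w,\rho)$.

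\textbf{Modulus of continuity.} For $0\le r\le t\le T$, let $\{f_a\}$ be an orthonormal basis of $W_0^{J}(w,\rho)$ and expand
\[
\bigl\|\Xi^{\NN}_{t\wedge\theta_{N,\kappa}}-\Xi^{\NN}_{r\wedge\theta_{N,\kappa}}\bigr\|_{-J}^{2}=\sum_{a}\bigl|\langle f_a,\Xi^{\NN}_{t\wedge\theta_{N,\kappa}}-\Xi^{\NN}_{r\wedge\theta_{N,\kappa}}\rangle\bigr|^{2}.
\]
Using (\ref{Eq:PrelimitEquation}), each inner product splits into a drift integral involving second-order differential operators in $f_a$, a $V$-integral involving the first-order operator $\genL_4^{X_s}$, the martingale contribution $\langle f_a,\sqrt{N}\tilde{\mart}^{N}_t-\sqrt{N}\tilde{\mart}^{N}_r\rangle$, and the remainder $R^N_{t,0}-R^N_{r,0}$. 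Cauchy--Schwarz in time together with Lemma \ref{L:DomainOfDefinitionOfG} and the $L^{2}$ bound of Lemma \ref{L:KeyEstimate2} bound the drift squared by $C(t-r)^{2}\|f_a\|_{J_0+2}^{2}$; the It\^{o} isometry bounds the $V$-integral squared by $C(t-r)\|f_a\|_{J_0+2}^{2}$; the martingale piece is bounded by $C(t-r)\|f_a\|_{W_0^{1}}^{2}$ exactly as in the estimate (\ref{Eq:EstimateMu}); and Lemma \ref{L:AuxilliaryBounds} controls the remainder. Summing over $a$ and using that, for the chosen $J_0$, the embeddings $W_0^{J}\hookrightarrow W_0^{J_0+2}$ and $W_0^{J}\hookrightarrow W_0^{1}$ are Hilbert--Schmidt, we obtain
\[
\sup_{N}\mathbb{E}\bigl[\|\Xi^{\NN}_{t\wedge\theta_{N,\kappa}}-\Xi^{\NN}_{r\wedge\theta_{N,\kappa}}\|_{-J}^{2}\bigm|\filt^N_r\bigr]\le C(t-r),
\]
which is the required Aldous-type estimate.

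\textbf{Conclusion and main obstacle.} Combining the compact containment with the modulus estimate, Theorem 8.6 of Chapter 3 of \citeasnoun{MR88a:60130} in the Joffe--M\'etivier formulation used in Lemma \ref{L:MuPrelimitSpace} gives the desired relative compactness of the stopped family in $D_{W_0^{-J}(w,\rho)}[0,T]$, and letting $\kappa\to\infty$ via (\ref{Eq:BoundedStoppingTime}) finishes the proof. The main obstacle is the Sobolev-index bookkeeping: the drift operator $\mathcal{G}_{x,\mu}$ costs two derivatives on the test function (Lemma \ref{L:DomainOfDefinitionOfG}), while Hilbert--Schmidt summability over an orthonormal basis of $W_0^{J}$ only recovers derivatives in excess of $D$; balancing these against the intrinsic regularity $J_0>2D+1$ needed in Lemma \ref{L:KeyEstimate2} is precisely what forces $J>3D+1$. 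Care is also needed to ensure that the nonlinear drift terms $\la \QQ,\bar\mu_s\ra\la \genL_2 f,\Xi^{N}_s\ra$ and $\la \QQ,\Xi^{N}_s\ra\la \genL_2 f,\mu^{N}_s\ra$ are controlled uniformly in $N$, which is where the uniform moment bounds of Lemma \ref{L:macrobound} and the linearity estimate (\ref{Eq:BoundForLemmaDomainOfDefinitionOfG}) enter.
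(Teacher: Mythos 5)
Your proposal follows essentially the same route as the paper: stop with $\theta_{N,\kappa}$, exploit the $\sup_t\mathbb{E}\|\Xi^N\|^2_{-J_1}$ bound in an intermediate space $J_1>2D+1$ (your $J_0$), use the Hilbert--Schmidt embedding $W^{J}_0\hookrightarrow W^{J_1+D}_0$ to make Parseval sums converge, split (\ref{Eq:PrelimitEquation}) into drift, $V$-integral, martingale, and remainder, and conclude via the Ethier--Kurtz/Joffe--M\'etivier criterion. The only notable deviation is that you propose to upgrade Lemma \ref{L:KeyEstimate2} to a $\mathbb{E}\sup_t$ bound for compact containment; this is more work than needed, since Theorem 8.6 of Chapter 3 of \citeasnoun{MR88a:60130} only requires tightness of $\Xi^N_t$ at each fixed $t$, which the $\sup_t\mathbb{E}$ bound of Lemma \ref{L:KeyEstimate2} already gives via Chebyshev and the compact embedding.
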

\begin{proof}
It is enough to prove tightness for
$\{\Xi^{\NN}_{t\wedge \theta_{N,\kappa}}, t\in[0,T]\}_{N\in \N}$. For $J_{1}>2D+1$, the bound from Lemma \ref{L:KeyEstimate2} holds, i.e.,
\begin{equation}
\sup_{N\in\N}\sup_{0\le t\le T}\BE\left(\left\Vert \Xi^{N}_{t\wedge \theta_{N,\kappa}}\right\Vert^{2}_{W^{-J_{1}}_{0}}
\right)\leq C.\label{Eq:BoundWithJ1}
\end{equation}

Let $\{f_{a}\}_{a\geq 1}$ be a complete orthonormal basis for $W^{J}_0$ with $J>J_{1}+D>3D+1$. By (\ref{Eq:PrelimitEquation}) we have
\begin{equation}
\la f_{a},\Xi^{N}_t\ra = \la f_{a},\Xi^{N}_r\ra+ \int_{r}^{t}\la \mathcal{G}_{X_{s},\bar{\mu}_{s},\mu^{N}_{s}}f_{a},
\Xi^{N}_s\ra ds+\int_{r}^{t}\la \genL^{X_{s}}_4 f_{a},\Xi^{N}_s\ra dV_{s}+\la f_{a}, \sqrt{N}\tilde{\mart}^{N}_{t,r}\ra+R^{N}_{t,r}.\label{Eq:representationXi}
\end{equation}

Next, we consider the mapping $H$ from $W^{J}_0(w,\rho)$ into $\mathbb{R}$ defined by
\[
H(f)=\left<\mathcal{G}_{X_{s},\bar{\mu}_{s},\mu^{N}_{s}}f,\Xi^{N}_s\right>
\]
and we notice that
\begin{align}
 \left<\mathcal{G}_{X_{s},\bar{\mu}_{s},\mu^{N}_{s}}f,\Xi^{N}_s\right>
&\leq \left\Vert \mathcal{G}_{X_{s},\bar{\mu}_{s},\mu^{N}_{s}}f\right\Vert^{2}_{W^{J_{1}}_0}\left\Vert \Xi^{N}_{s}\right\Vert^{2}_{W^{-J_{1}}_0}\nonumber\\
&\leq C \left\Vert f\right\Vert^{2}_{W^{J_{1}+2}_0}\left\Vert \Xi^{N}_{s}\right\Vert^{2}_{W^{-J_{1}}_0}\nonumber\\
&\leq C \left\Vert f\right\Vert^{2}_{W^{J_{1}+D}_0}\left\Vert \Xi^{N}_{s}\right\Vert^{2}_{W^{-J_{1}}_0}\nonumber
\end{align}
where the second inequality is due to Lemma \ref{L:DomainOfDefinitionOfG} and the third inequality because $D>2$.
Hence, by Parseval's identity we have
\begin{align*}
\left\Vert H\right\Vert^{2}_{W^{-J}_0}&\leq C \left\Vert \Xi^{N}_{s}\right\Vert^{2}_{W^{-J_{1}}_0}
\end{align*}

Thus, by (\ref{Eq:BoundWithJ1}) we get
\begin{align*}
 \sum_{a\geq 1}\BE\left[\int_{r}^{t}\left<\mathcal{G}_{X_{s},\bar{\mu}_{s},\mu^{N}_{s}}f_{a},\Xi^{N}_s\right>^{2}\chi_{\{\theta_{N,\kappa}^{p}\geq s\}}ds|\filt^{N}_{r}\right]
&\leq C\BE\left[\int_{r}^{t}\left\Vert \Xi^{N}_{s}\right\Vert^{2}_{W^{-J_{1}}_0}\chi_{\{\theta_{N,\kappa}^{p}\geq s\}}ds|\filt^{N}_{r}\right]\nonumber\\
&\leq C(t-r)
\end{align*}

Similarly we also obtain that
\begin{equation*}
\sum_{a\geq 1}\BE\left[\int_{r}^{t}\left<\genL^{X_{s}}_{4}f_{a},\Xi^{N}_s\right>^{2}\chi_{\{\theta_{N,\kappa}^{p}\geq s\}}ds|\filt^{N}_{r}\right]\leq C (t-r)
\end{equation*}
The last estimates, the uniform bound from Lemma \ref{L:KeyEstimate2}, Lemma \ref{L:MuPrelimitSpace} for $\sqrt{N}\tilde{\mart}^{N}_{t}$ and Lemma \ref{L:AuxilliaryBounds} for the remainder term $R^{N}_{t,r}$ imply the statement of the lemma. We follow the same steps as in the proof of Lemma \ref{L:MuPrelimitSpace} and thus the details are omitted.
\end{proof}

We end this section by proving a continuity result. 

\begin{lemma}\label{L:Continuity}
Any limit point of $\{\Xi^{N}_{t}, t\in[0,T]\}_{N\in \N}$ and $\{\sqrt{N}\tilde{N}^{N}_{t}, t\in[0,T]\}_{N\in \N}$ is continuous, i.e.,  it takes values in $C_{W^{-J}_0(w,\rho)}[0,T]$, with $J>3D+1$.
\end{lemma}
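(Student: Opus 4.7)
The plan is to invoke the standard criterion: if a sequence $\{Y^N\}$ is relatively compact in $D_{W^{-J}_0(w,\rho)}[0,T]$ and satisfies $\sup_{0\le t\le T}\|Y^N_t-Y^N_{t-}\|_{-J}\to 0$ in probability, then every accumulation point lies in $C_{W^{-J}_0(w,\rho)}[0,T]$; see e.g.\ Theorem 10.2 of Chapter 3 in \citeasnoun{MR88a:60130}. Relative compactness is already supplied by Lemmas \ref{L:XiPrelimitSpace} and \ref{L:MuPrelimitSpace}, so the task reduces to controlling the maximal jump size of each pre-limit process in the $W^{-J}_0(w,\rho)$-norm for $J>3D+1$.

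For $\Xi^N$, the first observation is that $\bar\mu$ has $E$-continuous paths: the representation of Lemma \ref{L:Qchar} expresses $\bar\mu_{t}$ as a conditional expectation driven only by the continuous Brownian motion $V$, so (\ref{Eq:LimitMu}) is continuous in $t$. Therefore $\Delta\Xi^N_t=\sqrt{N}\,\Delta\mu^N_t$, and the only jump times of $\mu^N$ are the default times $\tau^{N,n}$. At such a time the $n$-th name is removed from $\mu^N$ (contributing a mass of size $1/N$ in $f$-value) and every surviving intensity jumps by $\beta^C_{N,m}/N$. A first-order Taylor expansion in $\lambda$ combined with Condition \ref{A:regularity} yields
\begin{equation*}
|\la f,\Delta\mu^N_t\ra|\le \frac{C_{\KK}}{N}\Bigl(\|f\|_{\infty}+\Bigl\|\tfrac{\partial f}{\partial\lambda}\Bigr\|_{\infty}\Bigr),
\end{equation*}
so that $|\la f,\Delta\Xi^N_t\ra|\le C_{\KK}\,N^{-1/2}\|f\|_{C^1}$. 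The weighted Sobolev embedding available under Condition \ref{A:AssumptionsOnWeights} for $J>3D+1$ (Proposition 3.15 of \citeasnoun{GyongiKrylov1992} together with the standard Sobolev embedding theorem, already exploited in Lemmas \ref{L:KeyEstimate2} and \ref{L:MuPrelimitSpace}) gives $\|f\|_{C^1}\le C\|f\|_{W^J_0(w,\rho)}$. Taking the supremum over unit-ball elements of $W^J_0(w,\rho)$ then converts the pointwise bound into $\sup_{0\le t\le T}\|\Delta\Xi^N_t\|_{-J}\le C/\sqrt{N}$ almost surely.

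The martingale $\sqrt{N}\tilde{\mart}^N$ is handled in exactly the same way. Its continuous-martingale component, built from the Brownian motions $W^n$, contributes no jumps. Its purely discontinuous component only jumps at default times and the jump is $N^{-1/2}\,\tilde\jump^f_{N,n}(\tau^{N,n})$; by the definition of $\tilde\jump^f_{N,n}$ and Condition \ref{A:regularity} this is bounded by $C_{\KK}\,N^{-1/2}(\|f\|_{\infty}+\|\partial_\lambda f\|_{\infty})$, and the same Sobolev embedding therefore yields $\sup_{0\le t\le T}\|\Delta(\sqrt{N}\tilde\mart^N)_t\|_{-J}\le C/\sqrt{N}$ almost surely.

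The only delicate point is justifying the weighted Sobolev embedding $W^J_0(w,\rho)\hookrightarrow C^1_b(\hat\PP)$ on the unbounded domain $\hat\PP$ with the chosen weights, and this is precisely why $J$ is required to exceed $3D+1$: the extra room above the $2D+1$ and $D+1$ thresholds used in the preceding lemmas accommodates the one additional derivative that must be dominated uniformly. With the two jump estimates in hand, relative compactness combined with the continuity criterion cited at the outset forces every accumulation point of either sequence to have sample paths in $C_{W^{-J}_0(w,\rho)}[0,T]$, which is the statement of the lemma.
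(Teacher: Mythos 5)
Your high-level strategy is fine — control the maximal jump size in the $W^{-J}_0(w,\rho)$-norm and combine with relative compactness — and this is also what the paper does. But the key step you rely on, namely the embedding $\|f\|_{C^1_b(\hat\PP)}\le C\|f\|_{W^J_0(w,\rho)}$, is false for the weighted Sobolev spaces in play here. Condition \ref{A:AssumptionsOnWeights} allows (and the paper explicitly uses) decaying weights $w(\hat\pp)=(1+|\hat\pp|^2)^\beta$ with $\beta<0$, so that elements of $W^J_0(w,\rho)$ may grow polynomially at infinity; indeed Section \ref{S:NumericalMethod} shows $\lambda^k\in W^J_0(w,\rho)$, and these are manifestly not bounded. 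The embeddings exploited in Lemmas \ref{L:KeyEstimate2} and \ref{L:MuPrelimitSpace} are into other \emph{weighted} spaces $W^k_0(w,\rho)$, not into $C^1_b$, so invoking those lemmas as precedent for a $C^1_b$ embedding is a misreading. Without a uniform bound on $\|f\|_\infty+\|\partial_\lambda f\|_\infty$, your almost-sure jump bound $\sup_t\|\Delta\Xi^N_t\|_{-J}\le C/\sqrt{N}$ does not follow: the quantity $\tilde\jump^f_{N,n}(t)$ evaluates $f$ and $\partial_\lambda f$ at the running intensity $\lambda^{N,n}_t$, which is unbounded.

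The paper circumvents exactly this issue by proving the weaker statement $\BE\sup_{t\le T}\|\Xi^N_t-\Xi^N_{t-}\|_{-J}\to 0$ rather than an almost-sure maximal-jump bound. It splits the jump via (\ref{Eq:CoarseGrainJumpTerm}) into a piece controlled by $\|\partial^2 f_a/\partial\lambda^2\|_C$ of order $N^{-3/2}$ and a piece $\frac{1}{\sqrt{N}}\tilde\jump^{f_a}_{N,n}$, then takes expectations and invokes the intensity moment bound (Lemma \ref{L:macrobound}) together with a weighted Sobolev estimate (Proposition 3.15 of \citeasnoun{GyongiKrylov1992}) to obtain $\BE\sup_t|\la f_a,\Xi^N_t-\Xi^N_{t-}\ra|\le C_0(N^{-3/2}+N^{-1/2})\|f_a\|_{W^{D+3}_0}$, and finally sums over a complete orthonormal system using the nuclear embedding $W^J_0\hookrightarrow W^{D+3}_0$ for $J>3D+1$. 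The expectation (and the accompanying moment estimates) is exactly what accommodates the polynomial growth of the test functions; your argument omits this and therefore has a genuine gap. Your treatment of $\sqrt{N}\tilde{\mart}^N$ — decomposing into continuous and purely discontinuous parts — is a valid alternative route to the paper's observation that $\Xi^N$ and $\sqrt{N}\tilde{\mart}^N$ share the same discontinuities, but it rests on the same faulty $C^1_b$-embedding and so does not close either.
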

\begin{proof}
In order to prove that any limit point of $\left\{\Xi^{N}_{t}, t\in[0,T]\right\}_{N\in \N}$ takes values in $C_{W^{-J}_0(w,\rho)}[0,T]$, it is sufficient to show that
\[
\lim_{N\rightarrow\infty}\BE\left[\sup_{t\leq T}\left\Vert \Xi^{N}_{t}-\Xi^{N}_{t-} \right\Vert_{W^{-J}_0(w,\rho)}\right]=0
\]
Let $\{f_{a}\}_{a\geq 1}$ be a complete orthonormal basis for $W^{J}_0(w,\rho)$. Then, by definition, we have
\begin{align*}
\left<f_{a},\Xi^{N}_{t}-\Xi^{N}_{t-}  \right>&=\sqrt{N}\left[\left<f_{a},\mu^{N}_{t}-\bar{\mu}_{t}\right>-\left<f_{a},\mu^{N}_{t-}-\bar{\mu}_{t-}\right>\right]\nonumber\\
&=\sqrt{N}\left[\left<f_{a},\mu^{N}_{t}-\mu^{N}_{t-}\right>-\left<f_{a},\bar{\mu}_{t}-\bar{\mu}_{t-}\right>\right]\nonumber\\
&=\sqrt{N}\left[\jump^{f_{a}}_{\NN}(t)-\left<f_{a},\bar{\mu}_{t}-\bar{\mu}_{t-}\right>\right]\nonumber\\
&=\sqrt{N}\left[\jump^{f_{a}}_{\NN}(t)-\frac{1}{N}\tilde{\jump}^{f_{a}}_{\NN}(t)\right]+\frac{1}{\sqrt{N}}\tilde{\jump}^{f_{a}}_{\NN}(t)
-\sqrt{N}\left[\left<f_{a},\bar{\mu}_{t}-\bar{\mu}_{t-}\right>\right].\nonumber
\end{align*}
Clearly, $t\mapsto\bar{\mu}_{t}$ is continuous, so we only need to consider the first two terms. The first term is bounded by $\frac{\KK^2}{N^2}\left\|\frac{\partial^2 f_{a}}{\partial \lambda^2}\right\|_C$, see (\ref{Eq:CoarseGrainJumpTerm}), whereas for the second we immediately get
\[
\left|\tilde{\jump}^{f_{a}}_{\NN}(t)\right|\leq \KK \left[\left\Vert f_{a}\right\Vert+\left\Vert f'_{a}\right\Vert\right].
\]
So, as in Proposition 3.15 of \citeasnoun{GyongiKrylov1992}, we  get  that
\begin{equation*}
\BE\sup_{t\in[0,T]}\left|\left<f_{a},\Xi^{N}_{t}-\Xi^{N}_{t-}  \right>\right|\leq C_{0}\left[\frac{1}{N^{3/2}}+\frac{1}{\sqrt{N}}\right]\left\Vert f_{a}\right\Vert_{W^{D+3}_0}.
\end{equation*}
Thus,
\begin{align*}
\BE\left[\sup_{t\leq T}\left\Vert \Xi^{N}_{t}-\Xi^{N}_{t-} \right\Vert_{W^{-J}_0(w,\rho)}\right]&\leq \sum_{a\geq 1}\BE\left[\sup_{t\leq T}\left<f_{a},\Xi^{N}_{t}-\Xi^{N}_{t-} \right>\right]\nonumber\\
&\leq C_{0}\left[\frac{1}{N^{3/2}}+\frac{1}{\sqrt{N}}\right] \sum_{a\geq 1}\left\Vert f_{a}\right\Vert_{W^{D+3}_0}
\end{align*}

Since $J>3D+1$, we certainly have $W^{J}_{0}\hookrightarrow W^{D+3}_{0}$, and then as in Lemma \ref{L:KeyEstimate}, $\sum_{a\geq 1}\left\Vert f_{a}\right\Vert_{W^{D+3}_0}<\infty$, which implies that after taking the limit as $N\rightarrow\infty$, the right hand side of the last display goes to zero. This concludes the proof of continuity of the limit point trajectories of $\{\Xi^{N}_{t}, t\in[0,T]\}_{N\in \N}$.

Next we consider continuity of the trajectories of the limit points of $\{\sqrt{N}\tilde{N}^{N}_{t}, t\in[0,T]\}_{N\in \N}$. It follows directly by (\ref{Eq:representationXi}) and Lemma \ref{L:AuxilliaryBounds} that $\Xi^{N}_{t}$ and $\sqrt{N}\tilde{M}^{N}_{t}$ have the same discontinuities. Thus, the continuity of any limit point of $\{\Xi^{N}_{t}, t\in[0,T]\}_{N\in \N}$ implies the continuity of any limit point of $\{\sqrt{N}\tilde{N}^{N}_{t}, t\in[0,T]\}_{N\in \N}$, which concludes the proof of the lemma.
\end{proof}

\section{Uniqueness}\label{S:Uniqueness}

In this section we prove uniqueness of the stochastic evolution equation (\ref{Eq:CLT}). Let $\bar{\Xi}^{1}_{t}, \bar{\Xi}^{1}_{t}$ be two solutions of (\ref{Eq:CLT}) and let us define $\Phi_{t}=\bar{\Xi}^{1}_{t}-\bar{\Xi}^{2}_{t}$. Then, $\Phi_{t}$ will satisfy the stochastic evolution equation

\begin{align*}
\la f,\Phi_t\ra &=  \int_{0}^{t}\la \genL_1f+ \genL^{X_{s}}_3 f,
\Phi_s\ra ds+\int_{0}^{t}\left[ \la \QQ,\bar \mu_s\ra
\la \genL_2f,\Phi_s\ra+\la \QQ,\Phi_s\ra
\la \genL_2f,\bar \mu_s\ra\right]ds+\int_{0}^{t}\la \genL^{X_{s}}_4 f,\Phi_s\ra dV_{s}\nonumber\\
&= \int_{0}^{t}\la \mathcal{G}_{X_{s},\bar{\mu}_{s}}f,\Phi_s\ra ds+\int_{0}^{t}\la \genL^{X_{s}}_4 f,\Phi_s\ra dV_{s} \quad a.s. \label{Eq:UniquenessCLT1}
\end{align*}
Notice that this is a linear equation. In order to prove uniqueness, it is enough
to show that

\begin{equation*}
\BE \left\Vert \Phi_t\right\Vert^{2}_{-J}=0
\end{equation*}

Let $\{f_{a}\}$ be a complete orthonormal basis for $W^{J}_0(w,\rho)$. By It\^{o}'s formula we get that
\begin{align*}
 \left|\la f_{a},\Phi_t\ra\right|^2 &=  \int_{0}^{t}2\la f_{a},\Phi_s\ra\left[\la \genL_1f_{a}+ \genL^{X_{s}}_3 f_{a},
\Phi_s\ra+\la \QQ,\bar \mu_s\ra
\la \genL_2f_{a},\Phi_s\ra+\la \QQ,\Phi_s\ra
\la \genL_2f_{a},\bar \mu_s\ra\right]ds+\nonumber\\
&\hspace{2cm}+\int_{0}^{t}\left|\la \genL^{X_{s}}_4 f_{a},\Phi_s\ra\right|^{2} ds+\int_{0}^{t}2\la f_{a},\Phi_s\ra \la \genL^{X_{s}}_4 f_{a},\Phi_s\ra dV_{s}\nonumber\\
&=\int_{0}^{t}2\la f_{a},\Phi_s\ra\la \mathcal{G}_{X_{s},\bar{\mu}_{s}}f_{a},
\Phi_s\ra ds+\int_{0}^{t}\left|\la \genL^{X_{s}}_4 f_{a},\Phi_s\ra\right|^{2} ds+\int_{0}^{t}2\la f_{a},\Phi_s\ra \la \genL^{X_{s}}_4 f_{a},\Phi_s\ra dV_{s}\label{Eq:UniquenessCLT2}
\end{align*}
Then, summing over $a$ and taking expected value (due to Lemma \ref{L:DomainOfDefinitionOfG}, the expected value of the stochastic integral is zero) we have
\begin{equation}
\BE \left\Vert \Phi_t\right\Vert^{2}_{-J} =  \BE\int_{0}^{t}2\la\Phi_s,\mathcal{G}^{*}_{s}\Phi_s\ra_{-J} ds+\BE\int_{0}^{t}\left\Vert \genL^{*,X_{s}}_4 \Phi_s\right\Vert^{2}_{-J}  ds\label{Eq:UniquenessCLT3}
\end{equation}

Hence, if we prove that there is a constant $C>0$ such that
\begin{equation*}
2\la \phi, \mathcal{G}^{*}\phi\ra_{-J}+\left\Vert \genL^{*,x}_4 \phi\right\Vert^{2}_{-J} \leq  C \left\Vert \phi\right\Vert^{2}_{-J}
\end{equation*}
then we can conclude by Gronwall inequality that $\Phi_{t}=0$ a.s.

Let us recall now that
\begin{equation*}
(\mathcal{G}_{x,\mu}f)(\hat \pp)=(\genL_1f)(\hat \pp)+ (\genL^{x}_3 f)(\hat \pp)+ \la \QQ, \mu\ra
 (\genL_2f)(\hat \pp)+\la \genL_2f, \mu\ra\QQ(\hat \pp)
\end{equation*}

and set
\begin{eqnarray}
(\mathcal{G}^{1}_{x,\mu}f)(\hat \pp)&=&(\genL_1f)(\hat \pp)+ (\genL^{x}_3 f)(\hat \pp)+ \la \QQ,\mu\ra
 (\genL_2f)(\hat \pp)\nonumber\\
 (\mathcal{G}^{2}_{x,\mu}f)(\hat \pp)&=&\la \genL_2f, \mu\ra\QQ(\hat \pp)\nonumber
\end{eqnarray}
and $(\mathcal{G}^{i}_{s}f)(\hat \pp)=(\mathcal{G}^{i}_{X_{s},\bar{\mu}_{s}}f)(\hat \pp)$ for i=1,2.

Moreover, for simplicity in presentation and without loss of generality, we shall consider from now on only the case of a  homogeneous portfolio, i.e, set $\nu=\delta_{\hat \pp_{0}}$. This is done without loss of generality, since the operators $\genL_if$ involve differentiation only with respect to $\lambda$. The proof for the general heterogeneous case, is identical with heavier notation.

Before a term by term examination, we gather some straightforward results in the following lemma.
\begin{lemma}\label{L:IntegrationByParts}
Let $\psi\in C^{\infty}_{c}(\R_{+})$ and $J\geq 1$ and assume Condition \ref{A:AssumptionsOnWeights}. Then, up to a multiplicative constant for the term $O\left( \left\Vert\psi\right\Vert^{2}_{J}\right)$, that may be different from line to line,  we have
\begin{eqnarray}
\sum_{k=0}^{J}\int_{\R_{+}}w^{2}(\lambda)\rho^{2k}(\lambda)\lambda\psi^{(k+1)}(\lambda)\psi^{(k)}(\lambda)d\lambda&= &O\left( \left\Vert\psi\right\Vert^{2}_{J}\right)\nonumber\\
\sum_{k=0}^{J}\int_{\R_{+}}w^{2}(\lambda)\rho^{2k}(\lambda)\psi^{(k+1)}(\lambda)\psi^{(k)}(\lambda)d\lambda&=& O\left( \left\Vert\psi\right\Vert^{2}_{J}\right)\nonumber\\
\sum_{k=0}^{J}\int_{\R_{+}}w^{2}(\lambda)\rho^{2k}(\lambda)\lambda\psi^{(k+2)}(\lambda)\psi^{(k)}(\lambda)d\lambda&=&
-\int_{\R_{+}}w^{2}(\lambda)\rho^{2k}(\lambda)\lambda\left|\psi^{(k+1)}(\lambda)\right|^{2}d\lambda+O\left( \left\Vert\psi\right\Vert^{2}_{J}\right)\nonumber\\
\sum_{k=0}^{J}\int_{\R_{+}}w^{2}(\lambda)\rho^{2k}(\lambda)\lambda^{2}\psi^{(k+2)}(\lambda)\psi^{(k)}(\lambda)d\lambda&=&
-\int_{\R_{+}}w^{2}(\lambda)\rho^{2k}(\lambda)\left|\lambda \psi^{(k+1)}(\lambda)\right|^{2}d\lambda
+O\left( \left\Vert\psi\right\Vert^{2}_{J}\right)\nonumber
\end{eqnarray}
\end{lemma}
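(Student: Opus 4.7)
All four identities are integration-by-parts computations, and my plan is to treat them in the order stated so that the later ones can invoke the earlier ones. The compact support of $\psi$ in $\R_+$ (together with the vanishing at $\lambda=0$ caused by the $\lambda$ or $\lambda^2$ prefactor in the first, third, and fourth identities) will ensure that all boundary contributions drop out. The remaining ``residual'' integrals will be controlled using Condition \ref{A:AssumptionsOnWeights}, which is precisely the hypothesis that makes the relevant logarithmic derivatives of the weights $w,\rho$ bounded once measured against powers of $\rho$; this, combined with the compatibility $\lambda\lesssim\rho(\lambda)$ enjoyed by the choice $\rho(\lambda)=\sqrt{1+\lambda^2}$ used in the paper, is exactly what is needed to close the estimates with an $O(\|\psi\|_J^2)$ constant depending only on the weights and on $J$.

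\textbf{First two identities.} The starting observation is $\psi^{(k+1)}\psi^{(k)} = \tfrac{1}{2}\tfrac{d}{d\lambda}(\psi^{(k)})^2$. Substituting this and integrating by parts in $\lambda$ transfers the derivative onto the weight, yielding (modulo $\tfrac12$) $-\int (w^2\rho^{2k}\lambda)'(\psi^{(k)})^2 d\lambda$ for identity one and $-\int (w^2\rho^{2k})'(\psi^{(k)})^2 d\lambda$ for identity two. Expanding by Leibniz and factoring out $w^2\rho^{2k}$ gives a bounded ``coefficient'' of the form $2(w'/w)\lambda + 2k(\rho'/\rho)\lambda + 1$ in the first case (the same without the $+1$ in the second). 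By Condition \ref{A:AssumptionsOnWeights} one has $|w'/w|\le C/\rho$ and $|\rho'/\rho|\le C/\rho$, so each factor of $\lambda$ is absorbed by the bounded ratio $\lambda/\rho$. Each resulting integral is dominated by $\int w^2\rho^{2k}(\psi^{(k)})^2 d\lambda$ up to a constant, and summing over $k\le J$ produces the claimed $O(\|\psi\|_J^2)$.

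\textbf{Third and fourth identities.} For the third, I integrate by parts once in $\int w^2\rho^{2k}\lambda\,\psi^{(k+2)}\psi^{(k)}d\lambda$, writing $\psi^{(k+2)}=(\psi^{(k+1)})'$. This produces exactly two pieces: the main term $-\int w^2\rho^{2k}\lambda |\psi^{(k+1)}|^2 d\lambda$, which matches the kept term on the right-hand side, and a cross term $-\int (w^2\rho^{2k}\lambda)'\psi^{(k+1)}\psi^{(k)}d\lambda$. Expanding the Leibniz derivative and factoring out $w^2\rho^{2k}$ rewrites this cross term as a linear combination of integrals exactly of the types handled in identities one and two (modulated by $\lambda/\rho$ factors that are bounded), so it is already $O(\|\psi\|_J^2)$. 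The fourth identity is identical, with $\lambda^2$ replacing $\lambda$: the main term becomes $-\int w^2\rho^{2k}\lambda^2|\psi^{(k+1)}|^2 d\lambda$, and expansion of $(w^2\rho^{2k}\lambda^2)'$ yields an extra $2\lambda$ contribution that is precisely identity one, plus $\lambda^2$-weighted pieces that reduce to identity two after absorbing two bounded $\lambda/\rho$ factors.

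\textbf{Main obstacle.} The analytic content is entirely encoded in Condition \ref{A:AssumptionsOnWeights}; the rest is bookkeeping. The one place where some care is needed is in verifying that after the Leibniz expansions of $(w^2\rho^{2k}\lambda^j)'$ (and of its analogues in the reductions above), every surviving coefficient remains bounded uniformly in both $\lambda\in\R_+$ and $k\le J$. This uniformity in $k$ is what forces the finiteness of the constant in $O(\|\psi\|_J^2)$, and it relies on the particular coupled form $\rho^{k-1}D^k\rho$ and $w^{-1}\rho^k D^k w$ being bounded in Condition \ref{A:AssumptionsOnWeights} rather than those quantities being bounded separately. Once this bookkeeping is carried out, a single application of Cauchy--Schwarz on the mixed $\psi^{(k+1)}\psi^{(k)}$ terms finishes each estimate.
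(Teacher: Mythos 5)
Your proof follows essentially the same route as the paper's (one-line) proof: push derivatives off $\psi$ by integration by parts, kill boundary terms via compact support, and absorb the weight derivatives with Condition \ref{A:AssumptionsOnWeights}. The decomposition you use for the third and fourth identities (one IBP to expose the kept negative-definite term, then reduce the cross term) is the standard way to organize this, and your observation that the coupling in Condition \ref{A:AssumptionsOnWeights} together with $\lambda\lesssim\rho$ (valid for the paper's choice $\rho=\sqrt{1+|\hat\pp|^2}$, though strictly a little beyond the bare statement of Condition \ref{A:AssumptionsOnWeights}) is what makes all surviving coefficients uniformly bounded in $k\le J$ is exactly the right point.

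There is, however, a small imprecision in the closing of identities three and four that you should repair. After peeling off one derivative, the cross term at index $k$ is $-\int w^2\rho^{2k}h_k(\lambda)\,\psi^{(k)}\psi^{(k+1)}\,d\lambda$ with $h_k(\lambda)=2(w'/w)\lambda+2k(\rho'/\rho)\lambda+1$ (or the $\lambda^2$ analogue). This is \emph{not} literally a linear combination of the integrals in identities one and two: those have specific coefficients ($\lambda$ and $1$), whereas here you have a nonconstant bounded $h_k$. Your closing remark that ``a single application of Cauchy--Schwarz'' finishes the estimate also does not quite close the loop at the top index $k=J$: Cauchy--Schwarz bounds the cross term by $\bigl(\int w^2\rho^{2J}|\psi^{(J)}|^2\bigr)^{1/2}\bigl(\int w^2\rho^{2J}|\psi^{(J+1)}|^2\bigr)^{1/2}$, and the second factor is \emph{not} controlled by $\|\psi\|_J$. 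The clean fix is to run the $\tfrac12\tfrac{d}{d\lambda}(\psi^{(k)})^2$ trick one more time on the cross term itself, which converts it into $\tfrac12\int (w^2\rho^{2k}h_k)'(\psi^{(k)})^2\,d\lambda$; this requires observing that $h_k'$ is also uniformly bounded, which follows from Condition \ref{A:AssumptionsOnWeights} with $k=2$ (i.e.\ bounds on $w''/w$ and $\rho\rho''$) together with $\lambda\lesssim\rho$. Once you make this explicit, the argument is complete and matches the paper's intent.
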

\begin{proof}
It follows directly by integration by parts using Condition \ref{A:AssumptionsOnWeights} and the assumption that $\psi$ and its derivatives are compactly supported.
\end{proof}

Then we bound each term on the right hand side of (\ref{Eq:UniquenessCLT3}).
First, we notice that, by Riesz representation theorem, for $\phi\in W^{-J}_{0}$ there exists a unique $\psi=F(\phi)\in W^{J}_{0}$ such that
\begin{equation*}
\la \phi, f\ra=\la\psi, f\ra_{J}
\end{equation*}
By a density argument we may assume that $\psi=F(\phi)\in W^{J+2}_{0}$  and obtain
\begin{equation*}
\la \phi,\mathcal{G}^{*}\phi\ra_{-J} = \la \psi,\mathcal{G}^{*}\phi\ra =\la \mathcal{G} \psi,\phi\ra =\la \mathcal{G} \psi,\psi\ra_{J}.
\end{equation*}
which is true since by Lemma \ref{L:DomainOfDefinitionOfG}, $\mathcal{G} \psi\in W^{J}_{0}$.

\begin{lemma}\label{L:Bounds2a}
For $\phi\in W^{-J}_{0}$ such that $\psi=F(\phi)\in W^{J+2}_{0}$ we have
\begin{eqnarray*}
\la \phi, \mathcal{G}^{1,*}\phi\ra_{-J}&=&  \left(1+|b_{0}(x)|+|\sigma_{0}(x)|^{2}+\left<\QQ,\mu\right>\right)O\left(\left\Vert \phi\right\Vert^{2}_{-J}\right)-\frac{1}{2}\left|\beta^{S}\sigma_{0}(x)\right|^{2}\sum_{k=0}^{J}\int_{\R_{+}}w^{2}(\lambda)\rho^{2k}(\lambda) \left|\lambda\psi^{(k+1)}(\lambda)\right|^{2}d\lambda\nonumber\\
& &-\frac{1}{2}\sigma^{2}\sum_{k=0}^{J}\int_{\R_{+}}w^{2}(\lambda)\rho^{2k}(\lambda)\lambda \left|\psi^{(k+1)}(\lambda)\right|^{2}d\lambda-\sum_{k=0}^{J}\int_{\R_{+}}w^{2}(\lambda)\rho^{2k}(\lambda)\lambda \left|\psi^{(k)}(\lambda)\right|^{2}d\lambda
\end{eqnarray*}Since $\psi\in W^{J+2}_{0}$, the integrals on the right hand side are well defined and bounded.
\end{lemma}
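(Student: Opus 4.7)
The plan is to invoke the Riesz representation setup used in the paragraph preceding the lemma: writing $\psi=F(\phi)$, we have $\la\phi,\mathcal{G}^{1,*}\phi\ra_{-J}=\la\mathcal{G}^{1}_{x,\mu}\psi,\psi\ra_{J}$, with $\|\psi\|_{J}=\|\phi\|_{-J}$ by the Riesz isometry. Lemma \ref{L:DomainOfDefinitionOfG} ensures $\mathcal{G}^{1}_{x,\mu}\psi\in W^{J}_{0}$ once $\psi\in W^{J+2}_{0}$, so the pairing is well-defined, and the routine density approximation by $\mathcal{C}^{\infty}_{c}(\R_{+})$ functions justifies the integration-by-parts manipulations to follow.

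Next, decompose
\begin{equation*}
(\mathcal{G}^{1}_{x,\mu}\psi)(\lambda)=\tfrac{1}{2}\sigma^{2}\lambda\psi''(\lambda)+\tfrac{1}{2}(\beta^{S}\sigma_{0}(x))^{2}\lambda^{2}\psi''(\lambda)+\bigl(-\alpha(\lambda-\bar\lambda)+\beta^{S}b_{0}(x)\lambda+\beta^{C}\la\QQ,\mu\ra\bigr)\psi'(\lambda)-\lambda\psi(\lambda),
\end{equation*}
and expand
\begin{equation*}
\la\mathcal{G}^{1}_{x,\mu}\psi,\psi\ra_{J}=\sum_{k=0}^{J}\int_{\R_{+}}w^{2}(\lambda)\rho^{2k}(\lambda)\,D^{k}(\mathcal{G}^{1}_{x,\mu}\psi)(\lambda)\,\psi^{(k)}(\lambda)\,d\lambda,
\end{equation*}
applying Leibniz inside each $D^{k}$. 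The three coercivity terms in the statement emerge as follows. The second-order piece $\tfrac{1}{2}\sigma^{2}\lambda\psi^{(k+2)}\psi^{(k)}$, after the third identity of Lemma \ref{L:IntegrationByParts}, contributes $-\tfrac{1}{2}\sigma^{2}\sum_{k}\int w^{2}\rho^{2k}\lambda|\psi^{(k+1)}|^{2}\,d\lambda$; the piece $\tfrac{1}{2}(\beta^{S}\sigma_{0})^{2}\lambda^{2}\psi^{(k+2)}\psi^{(k)}$, via the fourth identity, yields $-\tfrac{1}{2}(\beta^{S}\sigma_{0})^{2}\sum_{k}\int w^{2}\rho^{2k}|\lambda\psi^{(k+1)}|^{2}\,d\lambda$; and the killing piece produces $-\sum_{k}\int w^{2}\rho^{2k}\lambda|\psi^{(k)}|^{2}\,d\lambda$ directly from the leading term of $D^{k}(-\lambda\psi)=-\lambda\psi^{(k)}-k\psi^{(k-1)}$.

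Every other contribution --- the Leibniz cross-terms $2k\lambda\psi^{(k+1)}$ and $k(k-1)\psi^{(k)}$ from $D^{k}(\lambda^{2}\psi'')$, the $k\psi^{(k+1)}$ from $D^{k}(\lambda\psi'')$, the first-order drift terms $-\alpha(\lambda-\bar\lambda)\psi'$, $\beta^{S}b_{0}(x)\lambda\psi'$, $\beta^{C}\la\QQ,\mu\ra\psi'$, and the $-k\psi^{(k-1)}$ byproduct of the killing term --- reduces, after a single integration by parts if needed, to one of the four templates in Lemma \ref{L:IntegrationByParts} and is therefore $O(\|\psi\|_{J}^{2})=O(\|\phi\|_{-J}^{2})$. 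Tracking the coefficients shows each remainder's prefactor is bounded by a constant multiple of $1+|b_{0}(x)|+|\sigma_{0}(x)|^{2}+\la\QQ,\mu\ra$, which is exactly the stated affine dependence.

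The principal obstacle is the combinatorial bookkeeping: each Leibniz cross-term must be matched to the correct template in Lemma \ref{L:IntegrationByParts} with the correct prefactor in $(x,\mu)$. The delicate case is $D^{k}(\lambda^{2}\psi'')=\lambda^{2}\psi^{(k+2)}+2k\lambda\psi^{(k+1)}+k(k-1)\psi^{(k)}$: the middle cross-term $2k\lambda\psi^{(k+1)}\psi^{(k)}$ must be absorbed into the remainder with prefactor $|\sigma_{0}(x)|^{2}$ rather than $|\sigma_{0}(x)|^{4}$, which works because only one power of $\lambda$ is traded away via the first identity of Lemma \ref{L:IntegrationByParts}, leaving exactly one factor of $|\beta^{S}\sigma_{0}(x)|^{2}$ in the final prefactor; the bottom cross-term is simply bounded by $\|\psi\|_{J}^{2}$ using Condition \ref{A:AssumptionsOnWeights}.
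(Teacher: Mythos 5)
Your proof is correct and follows essentially the same route as the paper's: after the Riesz identification $\la\phi,\mathcal{G}^{1,*}\phi\ra_{-J}=\la\mathcal{G}^{1}\psi,\psi\ra_{J}$, expand by Leibniz and invoke the four identities of Lemma \ref{L:IntegrationByParts}. The only (cosmetic) difference is that you group the terms of $\mathcal{G}^{1}_{x,\mu}\psi$ by differential order, whereas the paper treats $\genL_{1}\psi$, $\genL_{3}^{x}\psi$ and $\la\QQ,\mu\ra\genL_{2}\psi$ one at a time; the coercive pieces and the prefactor $1+|b_{0}(x)|+|\sigma_{0}(x)|^{2}+\la\QQ,\mu\ra$ come out identically either way.
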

\begin{proof}
We recall that
\begin{equation*}
\la \phi,\mathcal{G}^{1,*}\phi\ra_{-J} = \la \mathcal{G}^{1} \psi,\psi\ra_{J}.
\end{equation*}
and that
\begin{equation*}
(\mathcal{G}^{1}\psi)(\hat \pp)=(\genL_1\psi)(\hat \pp)+ (\genL^{x}_3 \psi)(\hat \pp) + \la \QQ, \mu \ra
 (\genL_2\psi)(\hat \pp)
\end{equation*}
Hence we bound each of the three terms separately. Using the statements of Lemma \ref{L:IntegrationByParts} we have for $\genL_1\psi$

\begin{align}
& \sum_{k=0}^{J}\int_{\R_{+}}w^{2}(\lambda)\rho^{2k}(\lambda)\psi^{(k)}(\lambda)\left(\genL_1 \psi(\lambda)\right)^{(k)}d\lambda=\nonumber\\
&\hspace{1cm}=\frac{1}{2}\sigma^{2}\sum_{k=0}^{J}\int_{\R_{+}} w^{2}(\lambda)\rho^{2k}(\lambda) \psi^{(k)}(\lambda)\left(\lambda \psi^{(2)}(\lambda)\right)^{(k)}d\lambda-\alpha\sum_{k=0}^{J}\int_{\R_{+}}w^{2}(\lambda)\rho^{2k}(\lambda)\psi^{(k)}(\lambda)\left(\lambda \psi^{(1)}(\lambda)\right)^{(k)}d\lambda\nonumber\\
&\hspace{1cm}+\alpha\bar{\lambda}\sum_{k=0}^{J}\int_{\R_{+}}w^{2}(\lambda)\rho^{2k}(\lambda)\psi^{(k)}(\lambda) \psi^{(k+1)}(\lambda)d\lambda-\sum_{k=0}^{J}\int_{\R_{+}}w^{2}(\lambda)\rho^{2k}(\lambda)\psi^{(k)}(\lambda)\left(\lambda \psi(\lambda)\right)^{(k)}d\lambda\nonumber\\
&\hspace{1cm}=\frac{1}{2}\sigma^{2}\sum_{k=0}^{J}\int_{\R_{+}}w^{2}(\lambda)\rho^{2k}(\lambda)\psi^{(k)}(\lambda)\left(\kappa\psi^{(k+1)}(\lambda)+\lambda \psi^{(k+2)}(\lambda)\right)d\lambda\nonumber\\
&\hspace{1cm}-\sum_{k=0}^{J}\alpha\int_{\R_{+}}w^{2}(\lambda)\rho^{2k}(\lambda)\psi^{(k)}(\lambda)\left(\kappa\psi^{(k)}(\lambda)+\lambda \psi^{(k+1)}(\lambda)\right)d\lambda\nonumber\\
& \hspace{1cm}+\sum_{k=0}^{J}\alpha\bar{\lambda}\int_{\R_{+}}w^{2}(\lambda)\rho^{2k}(\lambda)\psi^{(k)}(\lambda) \psi^{(k+1)}(\lambda)d\lambda-\sum_{k=0}^{J}\int_{\R_{+}}w^{2}(\lambda)\rho^{2k}(\lambda)\psi^{(k)}(\lambda)\left(\kappa\psi^{(k-1)}(\lambda)+\lambda \psi^{(k)}(\lambda)\right)d\lambda\nonumber\\
&\hspace{1cm}= O\left(\left\Vert\psi\right\Vert^{2}_{J}\right)-\frac{1}{2}\sigma^{2}\sum_{k=0}^{J}\int_{\R_{+}}w^{2}(\lambda)\rho^{2k}(\lambda)\lambda \left|\psi^{(k+1)}(\lambda)\right|^{2}d\lambda-\sum_{k=0}^{J}\int_{\R_{+}}w^{2}(\lambda)\rho^{2k}(\lambda)\lambda \left|\psi^{(k)}(\lambda)\right|^{2}d\lambda\nonumber\\
&\hspace{1cm}= O\left(\left\Vert\phi\right\Vert^{2}_{-J}\right)-\frac{1}{2}\sigma^{2}\sum_{k=0}^{J}\int_{\R_{+}}w^{2}(\lambda)\rho^{2k}(\lambda)\lambda \left|\psi^{(k+1)}(\lambda)\right|^{2}d\lambda-\sum_{k=0}^{J}\int_{\R_{+}}w^{2}(\lambda)\rho^{2k}(\lambda)\lambda \left|\psi^{(k)}(\lambda)\right|^{2}d\lambda\nonumber
\end{align}

Similarly, for  $\genL_{3}^{x}\psi$ we have
\begin{align}
& \sum_{k=0}^{J}\int_{\R_{+}}w^{2}(\lambda)\rho^{2k}(\lambda)\psi^{(k)}(\lambda)\left(\genL_{3}^{x} \psi
(\lambda)\right)^{(k)}d\lambda=\nonumber\\
&\hspace{1cm}=\beta^{S}b_{0}(x)\sum_{k=0}^{J}\int_{\R_{+}}w^{2}(\lambda)\rho^{2k}(\lambda)\psi^{(k)}(\lambda)\left(\lambda \psi^{(1)}(\lambda)\right)^{(k)}d\lambda\nonumber\\
&\hspace{1cm}+\frac{1}{2}\left|\beta^{S}\sigma_{0}(x)\right|^{2}\sum_{k=0}^{J}\int_{\R_{+}}w^{2}(\lambda)\rho^{2k}(\lambda)\psi^{(k)}(\lambda)\left(\lambda^{2} \psi^{(2)}(\lambda)\right)^{(k)}d\lambda\nonumber\\
&\hspace{1cm}=\beta^{S}b_{0}(x)\sum_{k=0}^{J}\int_{\R_{+}}w^{2}(\lambda)\rho^{2k}(\lambda)\psi^{(k)}(\lambda)\left(k\psi^{(k)}(\lambda)+\lambda \psi^{(k+1)}(\lambda)\right)d\lambda\nonumber\\
& \hspace{1cm}+\frac{1}{2}\left|\beta^{S}\sigma_{0}(x)\right|^{2}\sum_{k=0}^{J}\int_{\R_{+}}w^{2}(\lambda)\rho^{2k}(\lambda)\psi^{(k)}(\lambda)\left(\lambda^{2} \psi^{(k+2)}(\lambda)+2k\lambda \psi^{(k+1)}+w^{2}(\lambda)\rho^{2k}(\lambda)\frac{k!}{(k-2)!2!}\psi^{(k)}\right)d\lambda\nonumber\\
&\hspace{1cm}=\left(b_{0}(x)+\left|\beta^{S}\sigma_{0}(x)\right|^{2}\right) O\left(\left\Vert\psi\right\Vert^{2}_{J}\right)
-\frac{1}{2}\left|\beta^{S}\sigma_{0}(x)\right|^{2}\sum_{k=0}^{J}\int_{\R_{+}}w^{2}(\lambda)\rho^{2k}(\lambda)\left|\lambda\psi^{(k+1)}(\lambda)\right|^{2}d\lambda\nonumber\\
&\hspace{1cm}=\left(b_{0}(x)+\left|\beta^{S}\sigma_{0}(x)\right|^{2}\right) O\left(\left\Vert\phi\right\Vert^{2}_{-J}\right)
-\frac{1}{2}\left|\beta^{S}\sigma_{0}(x)\right|^{2}\sum_{k=0}^{J}\int_{\R_{+}}w^{2}(\lambda)\rho^{2k}(\lambda)\left|\lambda\psi^{(k+1)}(\lambda)\right|^{2}d\lambda\nonumber
\end{align}

Lastly, for the third term, $ \la \QQ, \mu\ra (\genL_2\psi)(\hat \pp)$, we have

\begin{eqnarray}
\la \QQ, \mu\ra\sum_{k=0}^{J}\int_{\R_{+}}w^{2}(\lambda)\rho^{2k}(\lambda)\psi^{(k)}(\lambda)\left(\genL_{2} \psi(\lambda)\right)^{(k)}d\lambda&=&\la \QQ,\mu\ra\sum_{k=0}^{J}\int_{\R_{+}}w^{2}(\lambda)\rho^{2k}(\lambda)\psi^{(k)}(\lambda)\psi^{(k+1)}(\lambda)d\lambda\nonumber\\
&=&\la \QQ,\mu\ra O\left(\left\Vert\phi\right\Vert^{2}_{-J}\right)\nonumber
\end{eqnarray}

Hence, putting all the estimates together, we conclude the proof of the lemma.
\end{proof}

\begin{lemma}\label{L:Bounds3a}
Let us assume that $\mu$ is such that $\int_{\R_{+}} \left(w^{2}(\lambda)\rho^{2}(\lambda)\right)^{-1}\mu(d\lambda)<\infty$. For $\phi\in W^{-J}_{0}$ such that $\psi\in W^{J}_{0}$ we have
\begin{equation*}
\la \phi, \mathcal{G}^{2,*}\phi\ra_{-J}=O\left(\left\Vert\phi\right\Vert^{2}_{-J}\right)
\end{equation*}
\end{lemma}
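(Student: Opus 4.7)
The plan is to reduce the estimate to a single application of Cauchy--Schwarz in the Hilbert space $W^J_0(w,\rho)$ combined with the fourth-term bound already carried out in the proof of Lemma \ref{L:DomainOfDefinitionOfG}. Exactly as in the set-up preceding Lemma \ref{L:Bounds2a}, by Riesz representation we identify
\[
\la \phi, \mathcal{G}^{2,*}\phi\ra_{-J} = \la \mathcal{G}^{2}_{X_s,\bar{\mu}_s}\psi,\psi\ra_J,
\]
where $\psi=F(\phi)\in W^J_0(w,\rho)$ and $\|\psi\|_J=\|\phi\|_{-J}$. No higher regularity of $\psi$ will be needed here, because $\mathcal{G}^{2}_{x,\mu}$ involves only first-order differentiation of $\psi$ and multiplication by the scalar $\la \genL_2\psi,\mu\ra$ times $\QQ(\hat\pp)=\lambda$.

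The key observation is that $(\mathcal{G}^{2}_{x,\mu}\psi)(\hat\pp) = \la \genL_2\psi,\mu\ra\,\QQ(\hat\pp)$ is precisely the ``fourth term'' bounded inside the chain (\ref{Eq:BoundForLemmaDomainOfDefinitionOfG}). That computation, which crucially uses the standing hypothesis $\int_{\R_+}(w^2(\lambda)\rho^2(\lambda))^{-1}\mu(d\lambda)<\infty$ in a Cauchy--Schwarz step with weight $w\rho$, gives, after the same manipulations but with $\psi$ in place of $f$,
\[
\left\Vert \mathcal{G}^{2}_{x,\mu}\psi\right\Vert_{J}^{2} \le C\left\Vert\psi\right\Vert_{J}^{2},
\]
with $C$ independent of $\phi$. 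I would simply cite the estimate rather than redo it. An application of Cauchy--Schwarz in $W^J_0(w,\rho)$ then yields
\[
\bigl|\la \mathcal{G}^{2}_{x,\mu}\psi,\psi\ra_J\bigr| \le \left\Vert\mathcal{G}^{2}_{x,\mu}\psi\right\Vert_J \left\Vert\psi\right\Vert_J \le C\left\Vert\psi\right\Vert_J^{2} = C\left\Vert\phi\right\Vert_{-J}^{2},
\]
which is the conclusion of the lemma.

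The main (indeed the only) obstacle is the adjoint identification in the first display above, and this is already the standard device used in Lemma \ref{L:Bounds2a}. In contrast to that lemma, no integration by parts is required here, because $\mathcal{G}^2$ factors through a scalar that is already controlled by a weighted Cauchy--Schwarz bound. The integrability assumption on $\mu$ is used in exactly one place (the weighted Cauchy--Schwarz step inherited from (\ref{Eq:BoundForLemmaDomainOfDefinitionOfG})), and it is verified for the application $\mu=\bar\mu_s$ via the representation of $\bar\mu_s$ in Lemma \ref{L:Qchar} and the moment bounds of Lemma \ref{L:macrobound}, provided the weights $(w,\rho)$ are chosen in accordance with Condition \ref{A:AssumptionsOnWeights}.
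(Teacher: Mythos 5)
Your proof is correct, and it lands on the same essential ingredient as the paper's: the weighted Cauchy--Schwarz bound for $\la\genL_2\psi,\mu\ra$ using $\int_{\R_+}(w^2\rho^2)^{-1}\mu(d\lambda)<\infty$, which is exactly the ``fourth term'' computation of Lemma \ref{L:DomainOfDefinitionOfG}. The organizational difference is that the paper expands $\la\mathcal{G}^2\psi,\psi\ra_J = \la\genL_2\psi,\mu\ra\sum_{k\le J}\int w^2\rho^{2k}\,(\lambda)^{(k)}\psi^{(k)}\,d\lambda$ and bounds it directly, whereas you factor through $|\la\mathcal{G}^2\psi,\psi\ra_J|\le\|\mathcal{G}^2\psi\|_J\|\psi\|_J$ and invoke the operator-norm bound $\|\mathcal{G}^2\psi\|_J\le C\|\psi\|_J$ already established in (\ref{Eq:BoundForLemmaDomainOfDefinitionOfG}). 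Your observation that this ``na\"ive'' Cauchy--Schwarz in $W^J_0$ suffices here precisely because $\mathcal{G}^2$ does not raise the differentiation order is apt; it explains why this lemma, unlike Lemma \ref{L:Bounds2a} for $\mathcal{G}^1$, needs no integration-by-parts cancellations and no density argument promoting $\psi$ to $W^{J+2}_0$. One small caveat is that by citing Lemma \ref{L:DomainOfDefinitionOfG} as stated you inherit its hypothesis $f\in W^{J+2}_0$, so you should point (as you do) to the intermediate inequality $\|\la\genL_2 f,\mu\ra\QQ\|_J^2\le C\|f\|_J^2$ inside the displayed chain, which only uses $f\in W^J_0$; with that made explicit your argument is complete and equivalent to the paper's.
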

\begin{proof}
By definition we have

\begin{eqnarray}
\la \mathcal{G}^{2}\psi, \psi\ra_{J}&=& \la \genL_{2}\psi,\mu \ra\sum_{k=0}^{J}\int_{\R_{+}}w^{2}(\lambda)\rho^{2k}(\lambda)\lambda^{(k)}\psi^{(k)}(\lambda)d\lambda\nonumber\\
&\leq&  \sqrt{\left\Vert\psi\right\Vert_{1}\int_{\R_{+}} \left(w^{2}(\lambda)\rho^{2}(\lambda)\right)^{-1}\mu(d\lambda)}\times\nonumber\\
& &\hspace{1cm}\times\left[\left(\int_{\R_{+}}w^{2}(\lambda)\lambda^{2}d\lambda\right)^{1/2}\left\Vert\psi\right\Vert_{0}+
\left(\int_{\R_{+}}w^{2}(\lambda)\rho^{2}(\lambda)d\lambda\right)^{1/2}\left\Vert\psi\right\Vert_{1}\right]\nonumber\\
&\leq& C \left\Vert\psi\right\Vert^{2}_{J}\nonumber\\
&=& C \left\Vert\phi\right\Vert^{2}_{-J}\nonumber
\end{eqnarray}
which concludes the proof of the lemma.
\end{proof}

\begin{remark}
Notice that the condition on the integrability of $\mu$ that appears in the statement of Lemma \ref{L:Bounds3a}, $\int_{\R_{+}} \left(w^{2}(\lambda)\rho^{2}(\lambda)\right)^{-1}\mu(d\lambda)<\infty$, is equivalent to assuming that $\mu$ has finite moments at least up to order
$2|\beta|-1$ if the weights are chosen such that $\rho(\lambda)=\sqrt{1+|\lambda|^{2}}$ and $w(\lambda)=\left(1+|\lambda|^{2}\right)^{\beta}$ and $\beta<0$.
\end{remark}

\begin{lemma}\label{L:Bounds1}
For $\phi\in W^{-J}_{0}$ such that $\psi\in W^{J+1}_{0}$ we have
\begin{equation*}
\left\Vert \genL^{*,x}_4 \phi\right\Vert^{2}_{-J}= \left|\beta^{S}\sigma_{0}(x)\right|^{2}O\left(\left\Vert \phi\right\Vert^{2}_{-J}\right)+\left|\beta^{S}\sigma_{0}(x)\right|^{2}
\sum_{k=0}^{J}\int_{\R_{+}}w^{2}(\lambda)\rho^{2k}(\lambda)\left|\lambda\psi^{(k+1)}(\lambda)\right|^{2}d\lambda
\end{equation*}
Since $\psi\in W^{J+1}_{0}$ the integral on the right hand side is well defined and bounded.
\end{lemma}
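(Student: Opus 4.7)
The plan is to represent $\genL_4^{*,x}\phi$ through the Riesz isomorphism between $W^{-J}_0(w,\rho)$ and $W^{J}_0(w,\rho)$, exactly as in the proofs of Lemmas \ref{L:Bounds2a}--\ref{L:Bounds3a}. Let $\tilde{\psi}\in W^J_0(w,\rho)$ denote the Riesz representative of $\genL_4^{*,x}\phi$, so that $\|\genL_4^{*,x}\phi\|_{-J}^2 = \|\tilde{\psi}\|_J^2$ and $\tilde{\psi}$ is characterized by $\la \tilde{\psi}, f\ra_J = \la \phi, \genL_4^x f\ra = \la \psi, \genL_4^x f\ra_J$ for every smooth compactly supported test $f$. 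As in those lemmas I restrict attention to the homogeneous case; the heterogeneous case is the same with heavier notation.

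I would then unfold the explicit action $\genL_4^x f = \beta^S\sigma_0(x)\lambda f'$, use the Leibniz identity $(\lambda f')^{(k)} = \lambda f^{(k+1)} + k f^{(k)}$, and integrate by parts in the $f^{(k+1)}$ piece (boundary terms vanishing since $f\in\mathcal{C}^{\infty}_{0}$). Writing $(w^2\rho^{2k})'=\alpha_k w^2\rho^{2k}$ with $\alpha_k$ and $\alpha_k\lambda$ bounded under Condition \ref{A:AssumptionsOnWeights}, this yields
\begin{equation*}
\la \tilde{\psi}, f\ra_J = \beta^S\sigma_0(x)\sum_{k=0}^J \int_{\R_+} w^2\rho^{2k}\, h_k(\lambda)\, f^{(k)}(\lambda)\,d\lambda, \qquad h_k = -\lambda\psi^{(k+1)} + (k-1-\alpha_k\lambda)\psi^{(k)}.
\end{equation*}
Since the map $f\mapsto (f^{(k)})_{k\le J}$ is an isometric embedding of $W^J_0(w,\rho)$ into the product Hilbert space $\prod_{k=0}^J L^2(w^2\rho^{2k}\,d\lambda)$, Cauchy--Schwarz in that product space combined with $\|\tilde{\psi}\|_J = \sup_{\|f\|_J\le 1}|\la\tilde{\psi},f\ra_J|$ gives the base inequality
\begin{equation*}
\|\tilde{\psi}\|_J^2 \;\le\; |\beta^S\sigma_0(x)|^2 \sum_{k=0}^J \int_{\R_+} w^2\rho^{2k}|h_k(\lambda)|^2\, d\lambda.
\end{equation*}

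To match the form asserted by the lemma I would expand $|h_k|^2$ into the leading square $|\lambda\psi^{(k+1)}|^2$, a cross term $-2\lambda(\alpha_k\lambda-(k-1))\psi^{(k+1)}\psi^{(k)}$, and a lower-order square $|(\alpha_k\lambda-(k-1))\psi^{(k)}|^2$. The last piece is directly $O(\|\psi\|_J^2)$ because $\alpha_k\lambda$ is bounded for the admissible weights of Section \ref{S:SobolevSpace}. The cross term is handled by writing $2\psi^{(k+1)}\psi^{(k)} = (d/d\lambda)(\psi^{(k)})^2$ and integrating by parts once more; the resulting integrand is $-[w^2\rho^{2k}\lambda(\alpha_k\lambda-(k-1))]'(\psi^{(k)})^2$, which after dividing out $w^2\rho^{2k}$ is again bounded in $\lambda$ under Condition \ref{A:AssumptionsOnWeights}, hence $O(\|\psi\|_J^2)$. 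Summing over $k$ and recalling $\|\psi\|_J=\|\phi\|_{-J}$ finishes the proof.

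The main obstacle is the second integration by parts for the cross term: one must verify that $\partial_\lambda[w^2\rho^{2k}\lambda(\alpha_k\lambda-(k-1))]/(w^2\rho^{2k})$ stays bounded in $\lambda$, which reduces to the boundedness of $\alpha_k\lambda$ and $\lambda\partial_\lambda(\alpha_k\lambda)$. All of these controls follow from Condition \ref{A:AssumptionsOnWeights} together with $\lambda/\rho$ being bounded for the standard choices $\rho(\lambda)=\sqrt{1+|\lambda|^{2}}$ and $w(\lambda)=(1+|\lambda|^{2})^{\beta}$ of Section \ref{S:SobolevSpace}. Crucially, the coefficient of the leading integral on the right-hand side comes out to be \emph{exactly} $|\beta^S\sigma_0(x)|^2$ (with no spurious factor of two from Young's inequality), which is precisely what is needed to cancel the negative term $-\tfrac12|\beta^S\sigma_0(x)|^2\sum_k\int w^2\rho^{2k}|\lambda\psi^{(k+1)}|^2\, d\lambda$ appearing in Lemma \ref{L:Bounds2a} when assembling the Gronwall estimate in \eqref{Eq:UniquenessCLT3}.
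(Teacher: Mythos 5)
Your proposal is correct and follows essentially the same route as the paper: test against $\zeta$ through the Riesz isomorphism, unfold $\genL_4^{x}$ via Leibniz, integrate by parts to shift the extra derivative off the test function, apply Cauchy--Schwarz (pointwise in $k$, then in $\lambda$), and absorb the cross term by a second integration by parts under Condition \ref{A:AssumptionsOnWeights}. Your closing observation about the leading coefficient being exactly $|\beta^{S}\sigma_{0}(x)|^{2}$ (so it can offset the $-\tfrac12|\beta^{S}\sigma_{0}(x)|^{2}$ term of Lemma \ref{L:Bounds2a} after the factor of $2$ in \eqref{Eq:UniquenessCLT3}) is precisely why a crude Young bound would not suffice here, matching the paper's careful Cauchy--Schwarz computation.
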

\begin{proof}
By definition we have

\begin{equation*}
\left\Vert \genL^{*,x}_4 \phi\right\Vert_{-J}=\sup_{\zeta\in W^{J}}\frac{\left|\la\genL^{*,x}_4 \phi,\zeta\ra\right|}{\left\Vert\zeta\right\Vert_{J}}=\sup_{\zeta\in W^{J}}\frac{\left|\la\psi,\genL^{x}_4 \zeta\ra_{J}\right|}{\left\Vert\zeta\right\Vert_{J}}
\end{equation*}
By the definition of the operator $\genL^{x}_4$ we have
\begin{align}
& \sum_{k=0}^{J}\int_{\R_{+}}w^{2}(\lambda)\rho^{2k}(\lambda)\psi^{(k)}(\lambda)\left(\genL^{x}_4 \zeta\right)^{(k)}d\lambda=\nonumber\\
&\hspace{1cm}=\beta^{S}\sigma_{0}(x)\sum_{k=0}^{J}\int_{\R_{+}}w^{2}(\lambda)\rho^{2k}(\lambda)\psi^{(k)}(\lambda)\left(\lambda \zeta^{(1)}(\lambda)\right)^{(k)}d\lambda\nonumber\\
&\hspace{1cm}=\beta^{S}\sigma_{0}(x)\left[\sum_{k=0}^{J}k\int_{\R_{+}}w^{2}(\lambda)\rho^{2k}(\lambda)\psi^{(k)}(\lambda)\zeta^{(k)}(\lambda)d\lambda+
\sum_{k=0}^{J}\int_{\R_{+}}w^{2}(\lambda)\rho^{2k}(\lambda)\lambda\psi^{(k)}(\lambda)\zeta^{(k+1)}(\lambda)d\lambda\right]\nonumber\\
&\hspace{1cm}\leq \left|\beta^{S}\sigma_{0}(x)\right|\left[
\sum_{k=0}^{J}\int_{\R_{+}}w^{2}(\lambda)\rho^{2k}(\lambda)\left|\lambda \psi^{(k+1)}(\lambda)+C\psi^{(k)}(\lambda)\right|\left|\zeta^{(k)}(\lambda)\right|
d\lambda\right]\nonumber\\
&\hspace{1cm}\leq \left|\beta^{S}\sigma_{0}(x)\right|
\int_{\R_{+}}\sqrt{\sum_{k=0}^{J}w^{2}(\lambda)\rho^{2k}(\lambda)\left|\lambda \psi^{(k+1)}(\lambda)+C\psi^{(k)}(\lambda)\right|^{2}}
\sqrt{\sum_{k=0}^{J}w^{2}(\lambda)\rho^{2k}(\lambda)\left|\zeta^{(k)}(\lambda)\right|^{2}}
d\lambda\nonumber\\
&\hspace{1cm}\leq \left|\beta^{S}\sigma_{0}(x)\right|
\sqrt{\sum_{k=0}^{J}\int_{\R_{+}}w^{2}(\lambda)\rho^{2k}(\lambda)\left|\lambda \psi^{(k+1)}(\lambda)+C\psi^{(k)}(\lambda)\right|^{2}d\lambda}\sqrt{\sum_{k=0}^{J}\int_{\R_{+}}w^{2}(\lambda)\rho^{2k}(\lambda)\left|\zeta^{(k)}(\lambda)\right|^{2}}
d\lambda\nonumber\\
&\hspace{1cm}= \left|\beta^{S}\sigma_{0}(x)\right|
\sqrt{\sum_{k=0}^{J}\int_{\R_{+}}w^{2}(\lambda)\rho^{2k}(\lambda)\left[\left|\lambda \psi^{(k+1)}(\lambda)\right|^{2}+C^{2}\left|\psi^{(k)}(\lambda)\right|^{2}+2C\lambda \psi^{(k+1)}(\lambda) \psi^{(k)}(\lambda)\right]d\lambda}\left\Vert\zeta\right\Vert_{J}\nonumber
\end{align}
The first inequality follows from integration by parts on the second integral and the second and third inequality by Cauchy-Schwartz using Condition \ref{A:AssumptionsOnWeights}. Therefore, by canceling the term $\left\Vert\zeta\right\Vert_{J}$ and then taking the square of the resulting expression, the statement of the lemma follows.
\end{proof}

Collecting the statements of Lemma \ref{L:Bounds2a} and Lemma \ref{L:Bounds3a} we obtain the following lemma.
\begin{lemma}\label{L:Gronwall}
For $\phi\in W^{-J}_{0}(w,\rho)$  we have
\begin{equation*}
2\la \phi, \mathcal{G}^{*}\phi\ra_{-J}+\left\Vert \genL^{*,x}_4 \phi\right\Vert^{2}_{-J} \leq  C\left(1+\left|b_{0}(x)\right|+ \left|\sigma_{0}(x)\right|^{2}+\left<\QQ,\mu\right>\right) \left\Vert \phi\right\Vert^{2}_{-J}.
\end{equation*}
\end{lemma}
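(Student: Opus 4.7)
The plan is to combine Lemmas \ref{L:Bounds2a}, \ref{L:Bounds3a}, and \ref{L:Bounds1} and exploit an exact cancellation between the destabilizing second-order term produced by $\genL^{*,x}_4$ and a corresponding negative contribution coming from $\mathcal{G}^{1,*}$. Writing $\mathcal{G} = \mathcal{G}^1 + \mathcal{G}^2$, linearity gives
\begin{equation*}
2\la \phi,\mathcal{G}^*\phi\ra_{-J} + \left\Vert \genL^{*,x}_4 \phi\right\Vert^{2}_{-J}
= 2\la \phi,\mathcal{G}^{1,*}\phi\ra_{-J} + 2\la \phi,\mathcal{G}^{2,*}\phi\ra_{-J} + \left\Vert \genL^{*,x}_4 \phi\right\Vert^{2}_{-J},
\end{equation*}
so it suffices to bound these three pieces using the results already proved in the preceding lemmas, where for $\phi\in W_0^{-J}(w,\rho)$ we set $\psi = F(\phi)\in W_0^J(w,\rho)$ via the Riesz isomorphism (and approximate by elements of $W_0^{J+2}(w,\rho)$ if needed).

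First I would invoke Lemma \ref{L:Bounds2a} to expand $2\la\phi,\mathcal{G}^{1,*}\phi\ra_{-J}$. This produces a benign term of size $(1+|b_0(x)|+|\sigma_0(x)|^2+\la\QQ,\mu\ra)\,O(\|\phi\|^2_{-J})$, together with three nonpositive correction terms
\begin{equation*}
-|\beta^S\sigma_0(x)|^2 \sum_{k=0}^{J}\int w^2\rho^{2k}|\lambda\psi^{(k+1)}|^2 d\lambda,\quad
-\sigma^2 \sum_{k=0}^{J}\int w^2\rho^{2k}\lambda|\psi^{(k+1)}|^2 d\lambda,\quad
-2\sum_{k=0}^{J}\int w^2\rho^{2k}\lambda|\psi^{(k)}|^2 d\lambda.
\end{equation*}
Next, Lemma \ref{L:Bounds1} gives $\|\genL^{*,x}_4\phi\|^2_{-J} = |\beta^S\sigma_0(x)|^2 O(\|\phi\|^2_{-J}) + |\beta^S\sigma_0(x)|^2\sum_{k=0}^{J}\int w^2\rho^{2k}|\lambda\psi^{(k+1)}|^2 d\lambda$. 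The key observation is that the explicit $|\beta^S\sigma_0(x)|^2$ integral from $\|\genL^{*,x}_4\phi\|^2_{-J}$ exactly cancels the first negative integral appearing in $2\la\phi,\mathcal{G}^{1,*}\phi\ra_{-J}$. The two remaining negative integrals (the $\sigma^2$ one and the $-2\int\lambda|\psi^{(k)}|^2$ one) are nonpositive and may simply be discarded in the upper bound.

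It remains to handle the contagion piece via Lemma \ref{L:Bounds3a}, which yields $2\la\phi,\mathcal{G}^{2,*}\phi\ra_{-J}=O(\|\phi\|^2_{-J})$ provided the integrability condition $\int(w^2\rho^2)^{-1}d\bar{\mu}_s<\infty$ is met; this was assumed when selecting the weights $(w,\rho)$ in Condition \ref{A:AssumptionsOnWeights} and is exactly the moment condition on $\bar{\mu}_s$ used throughout. Collecting the three contributions yields the claimed inequality with a constant depending only on $1+|b_0(x)|+|\sigma_0(x)|^2+\la\QQ,\mu\ra$, as desired.

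The genuine difficulty in this argument is not this final assembly, which is essentially bookkeeping, but rather the earlier fact encapsulated in Lemma \ref{L:Bounds2a} and Lemma \ref{L:Bounds1}: namely, that the weighted Sobolev norm is carefully chosen so that the two $|\beta^S\sigma_0(x)|^2\sum\int w^2\rho^{2k}|\lambda\psi^{(k+1)}|^2$ terms, which arise from the quadratic-growth second-order coefficient $(\beta^S)^2\lambda^2\sigma_0^2(x)$ in $\genL_3^x$ and from the linear growth of the first-order coefficient $\beta^S\lambda\sigma_0(x)$ in $\genL_4^x$, appear with matching coefficients and opposite signs. This is precisely what makes the weighted spaces of \citeasnoun{Purtukhia1984} and \citeasnoun{GyongiKrylov1992} the right framework, and it is what allows the Gronwall argument in the uniqueness proof to close.
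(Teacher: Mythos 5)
Your proof is correct and follows the same strategy the paper uses: the result is obtained by assembling the conclusions of Lemmas \ref{L:Bounds2a}, \ref{L:Bounds3a}, and \ref{L:Bounds1}, with the key step being the exact cancellation of the $|\beta^S\sigma_0(x)|^2\sum_k\int w^2\rho^{2k}|\lambda\psi^{(k+1)}|^2\,d\lambda$ terms between $2\la\phi,\mathcal{G}^{1,*}\phi\ra_{-J}$ and $\left\Vert\genL^{*,x}_4\phi\right\Vert^2_{-J}$, after which the remaining negative integrals can be dropped. You have in fact been more careful than the paper's one-line justification, which cites only Lemmas \ref{L:Bounds2a} and \ref{L:Bounds3a} and omits the explicit appeal to Lemma \ref{L:Bounds1} and the cancellation mechanism, even though both are clearly needed to control the $\left\Vert\genL^{*,x}_4\phi\right\Vert^2_{-J}$ term.
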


Then we are in position to prove uniqueness for the limiting stochastic evolution equation.
\begin{theorem}\label{T:SEE_Uniqueness}
Let us assume that $\int_{\R_{+}} \left(w^{2}(\lambda)\rho^{2}(\lambda)\right)^{-1}\mu(d\lambda)<\infty$. Then, the solution to the stochastic evolution equation (\ref{Eq:CLT}) is unique in $W^{-J}_{0}(w,\rho)$.
\end{theorem}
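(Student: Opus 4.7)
The plan is to reduce the uniqueness statement to a Gronwall argument on $\mathbb{E}\|\Phi_t\|^2_{-J}$, where $\Phi_t = \bar{\Xi}^1_t - \bar{\Xi}^2_t$ is the difference of two $W_0^{-J}(w,\rho)$-valued solutions. The key observation is that the martingale part $\bar{\mart}$ in (\ref{Eq:CLT}) is an input (its conditional covariance is determined by $\bar{\mu}$ via (\ref{Eq:ConditionalCovariation})) and so cancels upon subtraction. Consequently $\Phi_t$ satisfies the linear, driftless stochastic evolution equation written just after the theorem statement, with $\Phi_0 = 0$; crucially, it contains no jump/martingale contribution beyond the stochastic integral against $V$.

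Next I would apply It\^{o}'s formula to $|\langle f_a, \Phi_t\rangle|^2$ for each element $f_a$ of a complete orthonormal basis of $W_0^J(w,\rho)$, obtaining
\begin{equation*}
|\langle f_a, \Phi_t\rangle|^2 = \int_0^t 2\langle f_a,\Phi_s\rangle\langle \mathcal{G}_{X_s,\bar\mu_s}f_a,\Phi_s\rangle\,ds + \int_0^t |\langle \mathcal{L}_4^{X_s}f_a,\Phi_s\rangle|^2\,ds + \text{local martingale}.
\end{equation*}
Summing over $a$ and invoking Parseval's identity (together with the adjoint identification $\langle f_a, \mathcal{G}f_a\rangle_{J} = \langle \mathcal{G}^* \phi, \phi\rangle_{-J}$ used throughout Section~\ref{S:Uniqueness}), one arrives at
\begin{equation*}
\|\Phi_t\|^2_{-J} = \int_0^t \bigl[ 2\langle \Phi_s, \mathcal{G}^*_{X_s,\bar\mu_s}\Phi_s\rangle_{-J} + \|\mathcal{L}_4^{*,X_s}\Phi_s\|^2_{-J} \bigr]\,ds + M_t,
\end{equation*}
where $M_t$ is a real-valued local martingale; one then localizes and takes expectation to eliminate $M_t$, and later removes the localization via the uniform estimate on $\mathbb{E}\|\Phi_s\|^2_{-J}$ that the Gronwall step produces.

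At this point Lemma~\ref{L:Gronwall} (itself assembled from Lemmas~\ref{L:Bounds2a}, \ref{L:Bounds3a}, and \ref{L:Bounds1}) gives exactly the bound
\begin{equation*}
2\langle \phi, \mathcal{G}^{*}_{x,\mu}\phi\rangle_{-J}+\|\mathcal{L}^{*,x}_4 \phi\|^{2}_{-J} \leq C\bigl(1+|b_{0}(x)|+ |\sigma_{0}(x)|^{2}+\langle\mathcal{Q},\mu\rangle\bigr)\|\phi\|^{2}_{-J},
\end{equation*}
in which the damaging top-order terms $\tfrac{1}{2}(\beta^S\sigma_0(x))^2\int w^2\rho^{2k}|\lambda \psi^{(k+1)}|^2\,d\lambda$ produced by $\mathcal{G}^{1,*}$ in Lemma~\ref{L:Bounds2a} are cancelled exactly by the corresponding term from $\|\mathcal{L}_4^{*,x}\phi\|^2_{-J}$ in Lemma~\ref{L:Bounds1}; this cancellation is the structural reason the uniqueness argument closes and is what motivates the particular weighted norm. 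Plugging in $\mu = \bar\mu_s$ and $x = X_s$ yields
\begin{equation*}
\mathbb{E}\|\Phi_t\|^2_{-J} \leq C \int_0^t \mathbb{E}\!\left[\bigl(1+|b_{0}(X_s)|+|\sigma_0(X_s)|^2 + \langle\mathcal{Q},\bar\mu_s\rangle\bigr)\|\Phi_s\|^2_{-J}\right]ds.
\end{equation*}
Since Condition~\ref{A:integrability} bounds the $b_0, \sigma_0$ moments and Lemma~\ref{L:macrobound} (together with Lemma~\ref{L:Qchar}) gives $\sup_{s\leq T}\mathbb{E}\langle\mathcal{Q},\bar\mu_s\rangle^p < \infty$, a conditioning/H\"older argument reduces this to an inequality of the form $\mathbb{E}\|\Phi_t\|^2_{-J} \leq C\int_0^t \mathbb{E}\|\Phi_s\|^2_{-J}\,ds$, and Gronwall then forces $\mathbb{E}\|\Phi_t\|^2_{-J} = 0$ on $[0,T]$.

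The only genuinely delicate point I anticipate is justifying the density argument that lets us evaluate $\langle \Phi_s, \mathcal{G}^*_{X_s,\bar\mu_s}\Phi_s\rangle_{-J}$ via the Riesz representative $\psi = F(\phi) \in W_0^{J+2}(w,\rho)$ used in Lemmas~\ref{L:Bounds2a}--\ref{L:Bounds1}: a priori $\psi$ only belongs to $W_0^J$. One handles this by approximating $\psi$ in $W_0^J$ by smooth compactly supported $\psi_n$, applying the bounds to $\psi_n$, and passing to the limit using the boundedness of the quadratic form $\mathcal{G}^* + \mathcal{L}_4^{*,X_s}\mathcal{L}_4^{*,X_s}/2$ on $W_0^{-J}$ secured by the explicit calculations in Section~\ref{S:Uniqueness}; the integrability hypothesis on $\bar\mu_s$ in the theorem statement is what guarantees that the term from Lemma~\ref{L:Bounds3a} stays finite throughout this limit.
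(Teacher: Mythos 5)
Your proposal follows essentially the same route as the paper: subtract to get the driftless linear equation for $\Phi_t$, apply It\^o to $|\langle f_a,\Phi_t\rangle|^2$, sum over an orthonormal basis, invoke the key coercivity bound of Lemma~\ref{L:Gronwall} (which indeed rests on the exact cancellation of the top-order $|\lambda\psi^{(k+1)}|^2$ terms between Lemmas~\ref{L:Bounds2a} and \ref{L:Bounds1}), and close with Gronwall. The extra remarks you make about localizing the martingale, conditioning on $\mathcal{V}$ to handle the random coefficient $1+|b_0(X_s)|+|\sigma_0(X_s)|^2+\langle\mathcal{Q},\bar\mu_s\rangle$, and justifying the density step for $\psi=F(\phi)$ are small but legitimate refinements of technical points that the paper's very terse proof leaves implicit.
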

\begin{proof}
Equation (\ref{Eq:UniquenessCLT3}) gives, via Lemma \ref{L:Gronwall},
\begin{eqnarray}
\BE \left\Vert \Phi_t\right\Vert^{2}_{-J} &=&  \BE\int_{0}^{t}\left(2\la\Phi_s,\mathcal{G}^{*}_{X_{s},\bar{\mu}_{s}}\Phi_s\ra_{-J}+\left\Vert \genL^{*,X_{s}}_4 \Phi_s\right\Vert^{2}_{-J}\right) ds\nonumber\\
&\leq&C \int_{0}^{t}\BE \left\Vert \Phi_s\right\Vert^{2}_{-J} ds.\nonumber
\end{eqnarray}

Therefore, by applying Gronwall inequality we get that $\BE \left\Vert \Phi_t\right\Vert^{2}_{-J}=0$, which concludes the proof of the theorem.
\end{proof}

\appendix
\section{Performance of Numerical Schemes}\label{app}
This appendix provides additional numerical results on the performance of the numerical schemes described in Sections \ref{DirectSim} and \ref{SemiScheme}.

In Figure \ref{fig5}, we compare the standard error of Schemes 1 and 2 when estimating the expectation of $f(L_T^N) = (L_T^N - S)^{+}$ (i.e., a call option on the portfolio loss). Parameters are a time step of $0.005$, strike $S = 0.12$, and $K = 6$ as the truncation level.   Scheme 1 is performed using an optimal choice of $J$ and $M$ given by (\ref{OptimalChoice}).  For small $\beta^S$, Scheme 2 outperforms Scheme 1 since the conditionally Gaussian noise dominates.  For large $\beta^S$, the process $X$ dominates the dynamics and the semi-analytic calculation reduces the standard deviation an insufficient amount to justify the additional computational time.

\begin{figure}[t]
\begin{center}
\includegraphics[scale=0.7]{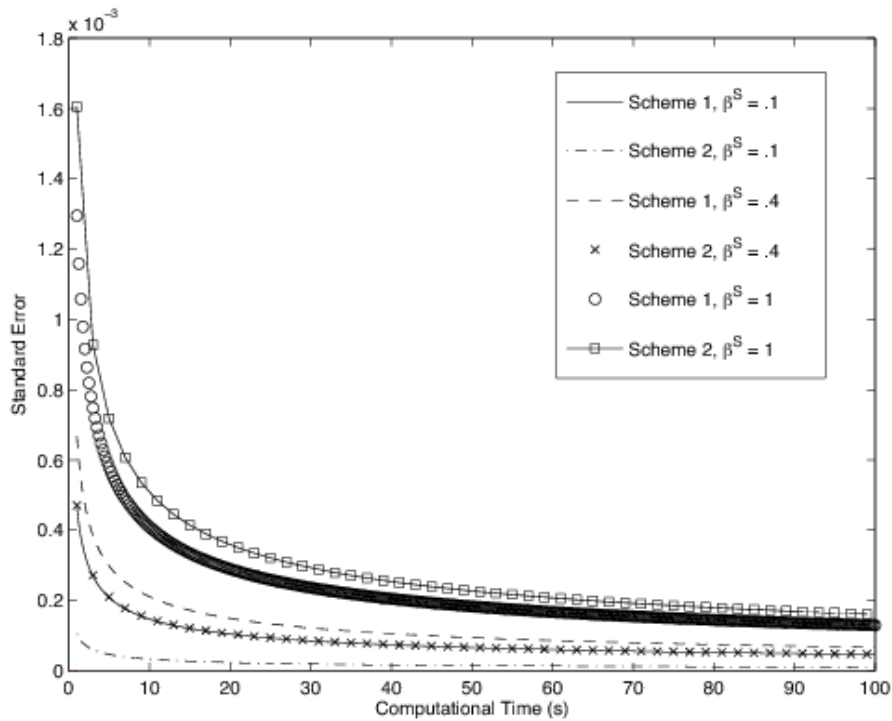}
\caption{Comparison of standard error for Scheme $1$ and Scheme $2$, respectively described in Sections \ref{DirectSim} and \ref{SemiScheme}.  Parameter case is $T = 0.5, N = 500, \sigma = 0.9, \alpha = 4, \lambda_0 = \bar{\lambda} = 0.2,$ and $ \beta^C = 1$.  The systematic risk $X$ is an OU process with mean $1$, reversion speed $2$, volatility $1$, and initial value $1$.}
\label{fig5}
\end{center}
\end{figure}

We also compare the standard error of the second-order approximation (calculated using Scheme 1) and direct simulation of the original finite system (\ref{E:main}) in Figure \ref{StandardError1}.
\begin{figure}[t]
\begin{center}
\includegraphics[scale=0.7]{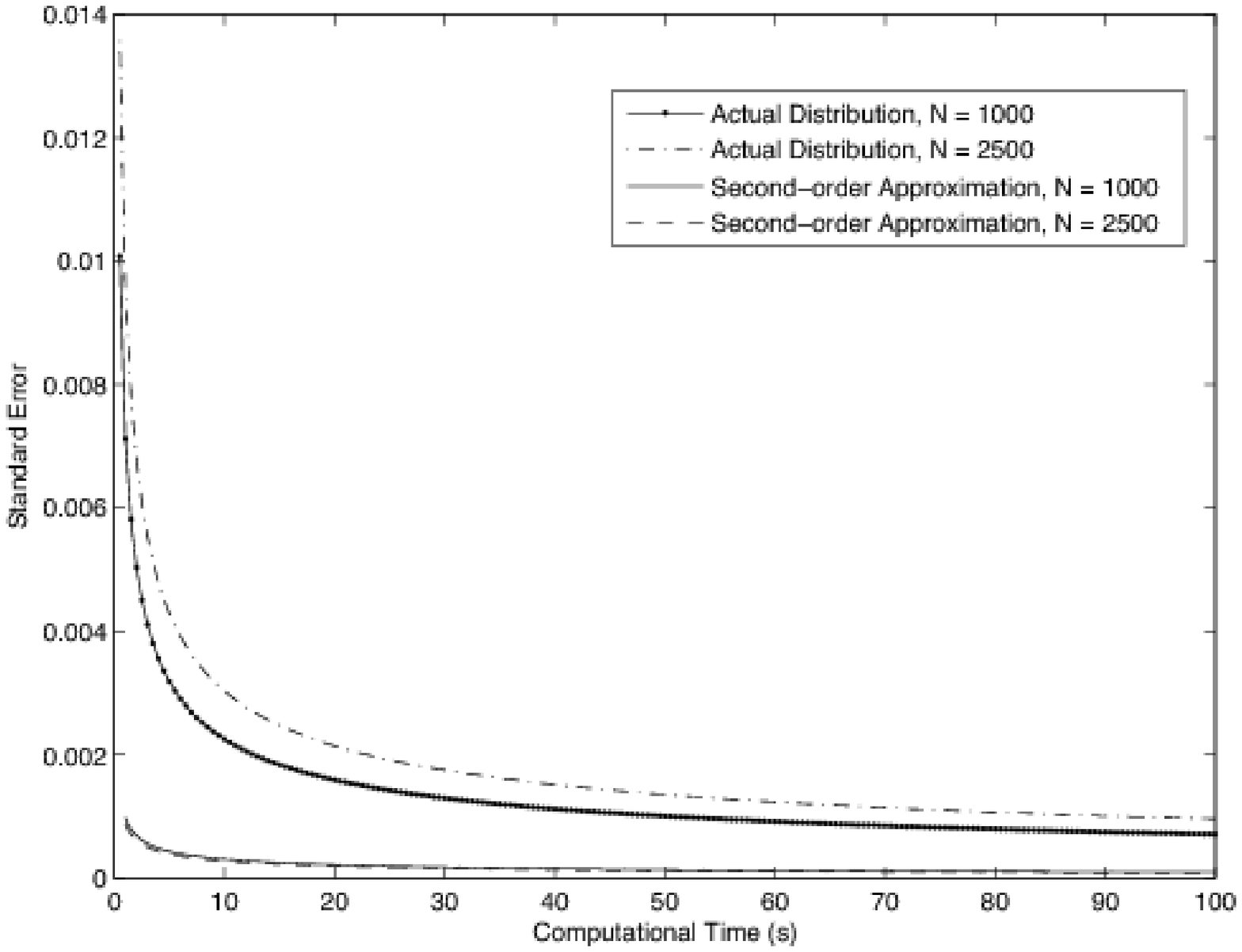}
\caption{Comparison of standard error for direct simulation of the finite system and the second-order approximation.  Parameter case is $T = 0.5, \sigma = 0.9, \alpha = 4, \lambda_0 = \bar{\lambda} = 0.2, \beta^C = 1,$ and $\beta^S =1$. The systematic risk $X$ is an OU process with mean $1$, reversion speed $2$, volatility $1$, and and initial value $1$.}
\label{StandardError1}
\end{center}
\end{figure}
We use a truncation level of $K = 6$ and a time step of $0.005$ for both asymptotic and finite systems.  We report exact values in the following table for a total computational time of $50$ seconds.  For $N = 1,000$ and $N = 2,500$, the second-order approximation has a
standard error roughly one order of magnitude smaller than the finite system simulation.  This means that for the standard errors to be equal, one must invest roughly two orders of magnitude more computational resources into the finite system simulation.  \\

\begin{center}
  \begin{tabular}{| l | c | r | }
    \hline
      & Finite System & Second-order Approximation  \\ \hline
Standard Error  ($N = 1,000$)  &  1.00 $\times 10^{-3}$ &  1.39 $\times 10^{-4}$ \\ \hline
Standard Error ($N= 2,500$) & 1.40 $\times 10^{-3}$   &  1.24 $\times 10^{-4}$  \\ \hline
  \end{tabular}
\label{Tabel1}
\end{center}


\bibliographystyle{jmr}

\end{document}